\documentclass[12pt]{amsart}
\usepackage[draft=false,kerning=true]{microtype}
\usepackage[margin=1in]{geometry}
\author{Christoph Buchheim \and Maribel Montenegro \and Angelika Wiegele}
\date{}

%

\newtheorem{theorem}{Theorem}
\newtheorem{lemma}[theorem]{Lemma}
\newtheorem{coro}[theorem]{Corollary}

\usepackage{etex}

\usepackage{graphicx, subfigure}
\usepackage{amssymb}
\usepackage{amsmath}
\usepackage{mathtools}
\usepackage{mathrsfs}
\usepackage{rotating}
\usepackage{nicefrac}
\usepackage[linesnumbered,ruled,vlined]{algorithm2e}
\SetKwRepeat{Do}{do}{while}
\usepackage{algpseudocode}
\usepackage{pstricks}
\usepackage{pst-func,pst-eucl,pstricks-add}

%

\DeclareMathOperator{\rank}{rank}
\DeclareMathOperator{\cv}{conv}

\DeclareMathOperator*{\argmax}{arg\,max}

\newcommand{\Dg}[1]{\mathrm{Diag}(#1)}
\newcommand{\aop}{\mathcal{A}}
\newcommand{\Rbb}{\mathbb{R}}
\newcommand{\Rbbn}{\mathbb{R}^n}

\newcommand{\Zbb}{\mathbb{Z}}

\newcommand{\1}{{{\mathchoice {\rm 1\mskip-4mu l} {\rm 1\mskip-4mu l}{\rm 1\mskip-4.5mu l} {\rm 1\mskip-5mu l}}}}
\newcommand{\pin}[2]{\left\langle#1,#2\right\rangle}

\newcommand{\st}{\text{s.t.\ }}

\newcommand{\funjc}[3]{#1\colon #2 \longrightarrow #3}
\newcommand{\funcionjc}[5]{{\setlength{\arraycolsep}{2pt}
    \begin{array}{lccl}
      #1\colon & #2 & \longrightarrow & #3\\
      & #4 & \longmapsto & #5
    \end{array}}
}

\newcommand{\cal}{\mathcal}
\newif\ifappendix
\global\appendixtrue

\usepackage{color}
\newcommand{\mred}[1]{#1}

%
%

\begin{document}

\title{SDP-based Branch-and-Bound for Non-convex Quadratic Integer Optimization}
\thanks{This work was partially supported by the Marie Curie Initial Training Network MINO (Mixed-Integer Nonlinear Optimization) funded by the European Union. The first and the second author were partially supported by the DFG under grant BU 2313/4-2. This paper is based on the PhD thesis~\cite{Montenegro(2017)}; a
preliminary version can be found in~\cite{iscopaper}.}




\maketitle

\begin{abstract}
  Semidefinite programming (SDP) relaxations have been intensively
  used for solving discrete quadratic optimization problems, in
  particular in the binary case. For the general non-convex integer
  case with box constraints, the branch-and-bound algorithm Q-MIST has
  been proposed~\cite{BuchheimWiegele(2013)}, which is based on an extension of the well-known
  SDP-relaxation for max-cut. For solving the resulting SDPs, Q-MIST uses an off-the-shelf
  interior point algorithm.

  In this
  paper, we present a tailored coordinate ascent algorithm for solving
  the dual problems of these SDPs. Building on related ideas of
  Dong~\cite{HongboDong(2014)}, it exploits the particular structure
  of the SDPs, most importantly a small rank of the constraint
  matrices. The latter allows both an exact line search and a fast
  incremental update of the inverse matrices involved, so that the
  entire algorithm can be implemented to run in quadratic time per
  iteration. Moreover, we describe how to extend this approach to a
  certain two-dimensional coordinate update. Finally, we explain how
  to include arbitrary linear constraints into this framework, and
  evaluate our algorithm experimentally.

\keywords{Quadratic integer programming \and semidefinite programming \and
  coordinate-wise optimization}
\end{abstract}

\section{Introduction}
\label{intro}

We address integer quadratic optimization problem of the following form
\begin{align}
\label{P:generalqp}
\min \ & \ x^\top\hat Q x +\hat l^\top x +\hat c  \nonumber \\
\st & \  Ax \le b \tag{IQP}\\
    & \ x\in \Zbb^{n}, \nonumber
\end{align}
where $\hat Q$ is a symmetric $n\times n$ matrix, $\hat l\in\Rbbn$,
$\hat c\in\Rbb$, $A\in\Rbb^{m\times n}$, and $b\in\Rbb^m$.

Even in the special case of a convex objective function, i.e.,
when~$\hat Q$ is positive semidefinite, Problem~\eqref{P:generalqp} is
NP-hard in general due to the presence of integrality constraints. In
fact, in the unconstrained case it is equivalent to the NP-hard
closest vector problem~\cite{Boas(1981)}. However, dual bounds can be
computed by relaxing integrality and then solving the resulting convex
QP-relaxations. These bounds can be used within a branch-and-bound
algorithm~\cite{BuchheimDeSantis} and improved in various
ways exploiting integrality~\cite{BuchheimCaprara,BuchheimHuebner}. Dual bounds can also be
derived from semidefinite relaxations~\cite{ParkBoyd}. More generally,
convex discrete optimization problems can be addressed by solving
convex non-linear relaxations or by other approaches such as outer
approximation~\cite{bonmin}.
In the case of a non-convex objective, the problem remains NP-hard
even if integrality constraints are dropped. If only box constraints
are considered, the resulting problem is called Box-QP, it has
attracted a lot of attention in the
literature~\cite{Burer2009,BurerLetchford2009,bonami16}. \mred{Exploiting the integrality instead, the problem can be convexified using the QCR-method~\cite{BillionnetElloumiLambert(2012)}.}

For integer variables subject to box constraints and a general
quadratic objective function, a branch-and-bound algorithm called
Q-MIST has been presented by Buchheim and
Wiegele~\cite{BuchheimWiegele(2013)}. It is based on SDP formulations
that generalize the well-known \mred{semidefinite} relaxation for
max-cut~\cite{Poljak1995}.  At each node of the branch-and-bound tree,
Q-MIST calls a standard interior point method to solve a semidefinite
relaxation obtained from Problem~\eqref{P:generalqp}.  It is
well-known that interior point algorithms are theoretically efficient
to solve semidefinite programs, they are able to solve medium to small
size problems with high accuracy, but they are memory and time
consuming, becoming less useful for large-scale instances. For a
survey on interior point methods for SDP; see, e.g.,~\cite{Wolkowicz}
and \cite{Anjos2012}.

Several researchers have proposed other approaches for solving SDPs that all
attempt to overcome the practical difficulties of interior point methods.
The most common ones include bundle methods~\cite{HelmbergRendl2000}
and (low rank) reformulations as unconstrained non-convex
optimization problems together with the use of non-linear methods to solve
the resulting problems~\cite{HomerPeinado(1997),BurerMonteiro(2001),Grippo2011}.
\mred{Furthermore, algorithms based on augmented Lagrangian methods have been applied successfully for solving semidefinite programs~\cite{BurerVandenbussche(2006),MalickPovhRendlWiegele(2009),WenGoldfarbYin(2010),ZhaoSunToh(2010),SunTohYang(2015),KimKojimaToh(2016)}.}
Recently, another algorithm  has been proposed by Dong~\cite{HongboDong(2014)}
for solving a class of semidefinite programs. The author of~\cite{HongboDong(2014)} also considers
Problem~\eqref{P:generalqp} with box-constraints and reformulates it as a convex quadratically constrained problem,
then convex relaxations are produced via a cutting surface procedure based
on diagonal perturbations. The separation problem turns out
to be a semidefinite \mred{program} with convex non-smooth objective function,
and it is solved by a primal barrier coordinate minimization algorithm
with exact line search.


\paragraph{Our Contribution.}

In this paper, we focus on improving Q-MIST by using an alternative
method for solving the \mred{semidefinite} relaxation.  Our approach tries to exploit
the specific problem structure, namely a small total number of
(active) constraints and low rank constraint matrices that appear in
the semidefinite relaxation. We exploit this special structure by
solving the dual problem of the semidefinite relaxation by means of a
coordinate ascent algorithm that adapts and generalizes the algorithm
proposed in~\cite{HongboDong(2014)}, based on a barrier
model. While the main idea of exploiting the sparsity of the
  constraint matrices is taken from~\cite{HongboDong(2014)}, the class of semidefinite relaxations we obtain is much
  more general than the ones considered in~\cite{HongboDong(2014)}. In
  particular, the choice of the coordinate and the computation of optimal step lengths \mred{become} more
  sophisticated. However,
we can efficiently find a coordinate with largest gradient
entry, even if the number of constraints is exponentially large, and
perform an exact line search using the Woodbury formula.
Moreover, we can extend this idea and optimize over certain
combinations of two coordinates simultaneously, which leads to a
significant improvement of running times.

The basic idea of the approach has already been presented
  in~\cite{iscopaper}. However, a thorough mathematical analysis has
  not been given there. In particular, we show here that strong
  duality holds for the semidefinite relaxations and that the level sets of the
  barrier problem are closed and bounded, so that a coordinate ascent
  method is guaranteed to converge; this type of analysis is also missing
  in~\cite{HongboDong(2014)}. Based on this, we can now give rigorous
  proofs for the existence of optimal step lengths. Moreover, we
  introduce a more flexible SDP formulation depending on a
  vector~$\beta$ which does not change the primal feasible set, but
  the dual one, and which turns out to improve the convergence
  properties in practice when chosen appropriately.

Different from~\cite{iscopaper}, we now also explain how to
extend this method in order to include arbitrary linear constraints
instead of only box constraints. This allows to address a much
 larger class of problem instances \mred{than}~\cite{iscopaper}.
However, the main difference to~\cite{iscopaper} from a computational
  point of view is the embedding of our method into a branch-and-bound
  scheme, including a discussion of how to compute primal solutions
  from the dual solutions in order to obtain a primal heuristic. We
  investigate the branch-and-bound algorithm experimentally and show that this
method not only improves Q-MIST with respect to using a general
interior point algorithm, but also outperforms standard optimization
software for most types of instances. The experiments presented
in~\cite{iscopaper} and~\cite{HongboDong(2014)} only evaluate the
dual bounds obtained from the method, but not the total running time needed
to solve the integer problems to optimality.

\paragraph{Outline.}

This paper is organized as follows.
In  Section~\ref{Sec:preliminaries} we recall
the  semidefinite relaxation of Problem~\eqref{P:generalqp} having
box-constraints only, rewrite it
in a matrix form, compute its dual and point out the properties
of this problem that will be used later.
In Section~\ref{Sec:generalalgo} we adapt and extend the
coordinate descent algorithm presented in~\cite{HongboDong(2014)}.
Then, we improve this first approach by exploiting the special structure of the
constraint matrices. We will see that this approach can be easily adapted to
more general quadratic problems that include linear constraints, which is
presented in Section~\ref{Sec:lc}.
Finally, in Section~\ref{Sec:experiments}  we evaluate this approach within the
branch-and bound framework of Q-MIST. The experiments show that
our approach produces lower bounds of the same quality but in significantly
shorter computation time for instances of large size.

\section{Preliminaries}
\label{Sec:preliminaries}


We first consider non-convex quadratic mixed-integer optimization problems
of the form
\begin{align}
\label{P:Quadratic}
\min &\quad  x^\top \hat Q x +\hat l^\top x+ \hat c\nonumber\\
\st &\quad x\in D_1\times\dots\times D_n,
\end{align}
where~$\hat Q\in S_{n}$ is not necessarily positive
semidefinite,~$\hat l\in \Rbbn$,~$\hat c\in \Rbb$, and the feasible
domain for variable~$x_i$ is a set~$D_i=\{l_i,l_i+1,l_i+2,\dots,u_i-1,u_i\}$
for~$l_i,u_i\in\Zbb$; by~$S_{n}$ we denote the set of all
symmetric~$n\times n$-matrices.  In~\cite{BuchheimWiegele(2013)}, a
more general class of problems has been considered, allowing arbitrary
closed subsets~$D_i\subseteq\Rbb$. However, in many applications, the
set~$D_i$ is finite, and for simplicity we may
assume~$D_i=\{l_i,l_i+1,l_i+2,\dots,u_i-1,u_i\}$ then. Moreover, the algorithm presented
in the following is easily adapted to a mixed-integer setting. In
Section~\ref{Sec:lc}, we will additionally allow arbitrary linear
contraints.


\subsection{Semidefinite relaxation}
\label{Sec:sdprelaxation}

In~\cite{BuchheimWiegele(2013)} it has been proved that
Problem~\eqref{P:Quadratic} is equivalent to
\begin{align}
\label{P:SDP}
\min \quad \pin{Q}{X}\ & \nonumber\\
\st \ \ (x_{0i},x_{ii})&\in P(D_i)
\quad \forall i=1,\dots n\\
~x_{00}&=1 \nonumber\\
\rank(X) &=1 \nonumber\\
~X&\succeq0\;, \nonumber
\end{align}
where \mred{$x_{ij}$ is the element in row~$i$ and column~$j$ of matrix $X$, which is indexed by $\{0,1,\dots,n\}$,} $P(D_i):=\cv\{(u,u^2)\mid u\in D_i\}$ and the matrix~$Q\in S_{n+1}$ is defined as
$$
Q
=
\begin{pmatrix}
\hat c & \tfrac{1}{2}\hat l^\top\\
\tfrac{1}{2}\hat l & \hat Q
\end{pmatrix}
\;.$$
As only  the rank-constraint is
non-convex in this formulation, by dropping it we obtain a semidefinite relaxation of~\eqref{P:Quadratic}.

By our assumption,
the set~$D_i$ is a finite sub-set of~$\Zbb$. In this
case,~$P(D_{i})$ is a polytope in~$\Rbb^2$ with~$|D_i|$ many extreme
points. It has therefore a representation as the set of solutions of
a system of~$|D_i|$ linear inequalities. Figure~\ref{Fig:PofD} shows
two examples.

\begin{figure}
\begin{center}
\subfigure[ ][$D_i=\{0,1\}$]{
\includegraphics{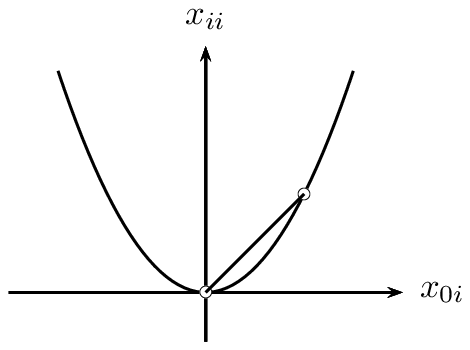}
}%
\hspace{1cm}
\subfigure[][$D_i=\{-1,0,1\}$]{
\includegraphics{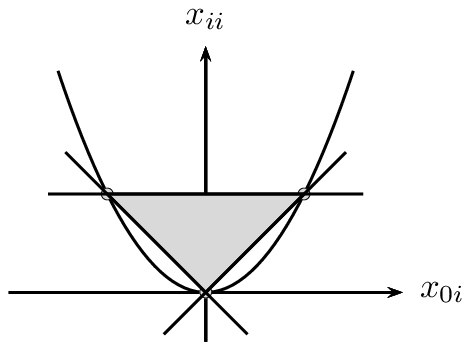}
}
\end{center}
\vspace{-0.5cm}
\caption{\label{Fig:PofD} The set~$P(D_i)$ and its polyhedral description}
\end{figure}

\begin{lemma}
\label{Lem:polyhedralformofP(Di)}
Let~$D_{i}=\{l_{i},\dots, u_{i}\}$ with~$l_{i}, u_{i}\in \Zbb$ and
$n_i:=|D_{i}|=u_i-l_i+1$. Then~$P(D_i)$ is completely described
by~$n_i-1$ lower bounding facets
\[
-x_{ii}+(2j+1)x_{0i}\leq j(j+1), \quad j=l_{i},l_{i}+1,\dots,u_{i}-1,
\]
and one upper bounding facet
\[
x_{ii}-(l_{i}+u_{i})x_{0i}\leq -l_iu_i.
\]
\end{lemma}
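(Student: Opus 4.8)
The plan is to show that $P(D_i) = \mathrm{conv}\{(u,u^2) \mid u \in D_i\}$ coincides with the polytope $R$ defined by the $n_i-1$ lower inequalities $-x_{ii} + (2j+1)x_{0i} \le j(j+1)$ for $j = l_i, \dots, u_i-1$ together with the single upper inequality $x_{ii} - (l_i+u_i)x_{0i} \le -l_i u_i$. Both inclusions are needed. The geometric picture (Figure~\ref{Fig:PofD}) is that the points $(u,u^2)$ lie on the parabola $x_{ii} = x_{0i}^2$; the lower facets are chords' supporting lines, i.e.\ tangent-like secant lines between consecutive lattice points, and the upper facet is the single chord joining the two extreme points $(l_i,l_i^2)$ and $(u_i,u_i^2)$.

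For the inclusion $P(D_i) \subseteq R$, since $R$ is convex it suffices to check that each generator $(u,u^2)$ with $u \in D_i$ satisfies every inequality. For the upper facet, substituting gives $u^2 - (l_i+u_i)u \le -l_i u_i$, i.e.\ $(u-l_i)(u-u_i) \le 0$, which holds exactly because $l_i \le u \le u_i$. For the lower facet indexed by $j$, substituting gives $-u^2 + (2j+1)u \le j(j+1)$, i.e.\ $0 \le u^2 - (2j+1)u + j(j+1) = (u-j)(u-j-1)$; since $u$ and $j$ are integers and $j, j+1$ are consecutive integers, the product $(u-j)(u-j-1)$ is a product of two consecutive integers, hence nonnegative. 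This establishes one direction.

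For the reverse inclusion $R \subseteq P(D_i)$, I would argue that $R$ is a bounded polytope (boundedness follows since the upper facet bounds $x_{ii}$ from above in terms of $x_{0i}$ and two of the lower facets bound it from below, pinning $x_{0i}$ to a bounded range) and then show every vertex of $R$ is one of the points $(u,u^2)$, $u \in D_i$. A vertex of $R$ in $\Rbb^2$ is the intersection of two facet-defining lines. I would compute: the intersection of two consecutive lower facets (indices $j$ and $j+1$, i.e.\ $j-1$ and $j$ after reindexing) yields the point $(j, j^2)$ for $j = l_i+1, \dots, u_i-1$; the intersection of the lowest lower facet ($j = l_i$) with the upper facet yields $(l_i, l_i^2)$; and the intersection of the highest lower facet ($j = u_i-1$) with the upper facet yields $(u_i, u_i^2)$. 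One must also check that non-adjacent pairs of lower facets intersect outside $R$ (their intersection point lies strictly below the parabola and violates some intermediate lower inequality), so they contribute no vertices. Since all vertices of $R$ lie in $\{(u,u^2) : u \in D_i\}$ and conversely each such point is a vertex (it is in $R$ and lies on the boundary meeting two facets), we get $R = \mathrm{conv}(\text{vertices}) = P(D_i)$.

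The main obstacle is the bookkeeping in the reverse direction: one must carefully verify both that the claimed $n_i$ vertices are genuinely extreme points of $R$ (not just feasible points lying on facets) and that no spurious vertices arise from intersecting non-consecutive lower facets or from unbounded directions. A cleaner alternative that avoids enumerating vertices of $R$: take any $(x_{0i}, x_{ii}) \in R$, let $u = x_{0i}$, and note the lower facets force $x_{ii} \ge (2j+1)u - j(j+1)$ for all $j$, with the tightest such bound (maximizing over $j$) equal to exactly the piecewise-linear lower envelope interpolating the points $(k,k^2)$ at integers $k$ — this is precisely the lower boundary of $P(D_i)$ — while the upper facet gives exactly the upper chord, which is the upper boundary of $P(D_i)$; hence $R \subseteq P(D_i)$ directly. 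I expect to use whichever of these two routes produces the shorter write-up, but both hinge on the same elementary fact that $(u-j)(u-j-1) \ge 0$ for integers and its "reverse" role in describing the envelope.
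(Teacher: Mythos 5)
The paper states Lemma~\ref{Lem:polyhedralformofP(Di)} without any proof, so there is no in-paper argument to compare against; judged on its own, your proposal is correct (for $n_i\ge 3$) and either of your two routes works. The forward inclusion via $(u-l_i)(u-u_i)\le 0$ and $(u-j)(u-j-1)\ge 0$ for consecutive-integer factors is exactly the right computation. Of your two reverse-inclusion routes, the envelope argument is the one to write up: on $[k,k+1]$ the maximum over integer $j$ of $(2j+1)x_{0i}-j(j+1)$ is attained at $j=k$ (the difference between the lines for $j$ and $j-1$ is $2(x_{0i}-j)$, so by discrete concavity the maximizer is $\lfloor x_{0i}\rfloor$), and that line interpolates $(k,k^2)$ and $(k+1,(k+1)^2)$; hence the lower constraints carve out exactly the piecewise-linear lower boundary of the hull while the upper facet is the top chord, and the two together also force $x_{0i}\in[l_i,u_i]$. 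This avoids the bookkeeping about spurious vertices entirely. One caveat your proposal shares with the lemma as literally stated: when $n_i=2$ the single lower facet and the upper facet are the same line with opposite orientations, so the system describes the full line $x_{ii}=(2l_i+1)x_{0i}-l_i(l_i+1)$ rather than the segment $P(D_i)$; your vertex intersections and your derivation of $l_i\le x_{0i}\le u_i$ both divide by $l_i+1-u_i=0$ there. The paper's remark immediately after the lemma treats the binary case separately, so you should either assume $n_i\ge 3$ or flag this degeneracy explicitly.
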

Notice that in case $|D_i|=2$, i.e., when the variable is binary, there is
only one lower bounding facet that together with the upper bounding
facet results in a single equation, namely, $x_{ii}-(2l_i+1)x_{0i}=-l_i(l_i+1)$.
However, for sake of simplicity, we will not distinguish these cases in the following.

\subsection{Matrix formulation}
\label{Sec:matrixform}


The relaxation of~\eqref{P:SDP} contains the
constraint~$x_{00}=1$, and this fact is exploited to rewrite the polyhedral
description of~$P(D_i)$ presented in Lemma~\ref{Lem:polyhedralformofP(Di)} as
\begin{align*}
  (\beta_{ij}-j(j+1))x_{00}-x_{ii}+(2j+1)x_{0i} & \leq \beta_{ij}, \quad j=l_{i},l_{i}+1,\dots,u_{i}-1\\
  (\beta_{iu_{i}}+l_iu_i)x_{00}+x_{ii}-(l_{i}+u_{i})x_{0i} & \leq \beta_{iu_{i}}
\end{align*}
for an arbitrary vector~$\beta\in\Rbb^m$, with~$m=\sum_{i=1}^nn_i$.  The
introduction of~$\beta$ does not change the primal
problem, but it has a strong impact on the dual
problem: the dual feasible set and objective function are both
affected by~$\beta$, as shown below.
The resulting inequalities are written in matrix form as
\[
\pin{A_{ij}}{X}\leq \beta_{ij}\;,
\]
where, for each variable~$i\in
\{1,\dots,n\}$, the index~$ij$ represents the inequalities
corresponding to lower bounding facets~$j=l_{i},l_{i}+1,\dots,u_{i}-1$
and~$j= u_{i}$ \mred{corresponding} to the upper bounding facet; see
Figure~\ref{Fig:inequality} for an illustration.

\begin{figure}
\begin{center}
\includegraphics{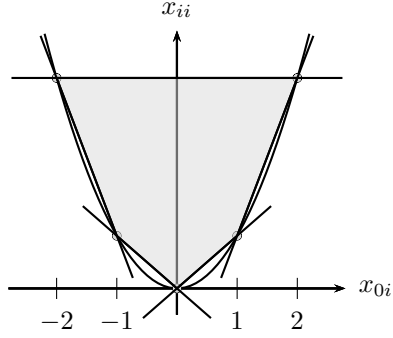}
\end{center}
\caption{\label{Fig:inequality}\small The polytope
 $P(\{-2,-1,0,1,2\})$. Lower bounding facets are indexed, from left to right,
  by~$j= -2,-1,0,1$, the upper bounding facet by 2.}
\end{figure}

Since each constraint links only the variables
$x_{00}$,~$x_{0i}$ and~$x_{ii}$, the constraint matrices~$A_{ij}\in S_{n+1}$ are sparse, the only non-zero entries being
$$
(A_{ij})_{00} = \beta_{iu_i}+l_iu_i,\quad
(A_{ij})_{0i} =(A_{ij})_{i0}=-\tfrac{1}{2}(l_{i}+u_{i}),\quad
(A_{ij})_{ii} =1
$$
in the upper bound constraint and
$$
(A_{ij})_{00} = \beta_{ij}-j(j+1),\quad
(A_{ij})_{0i} =(A_{ij})_{i0}=j+\tfrac{1}{2},\quad
(A_{ij})_{ii} =-1
$$
in the case of a lower bound constraint.
To be consistent, the constraint~$x_{00}=1$ is also written in
matrix form as~$\pin{A_0}{X}=1$, where~$A_0:=e_0e_0^\top\in~S_{n+1}$ and $e_0\in\Rbb^{n+1}$ is the unit vector $(1,0,\dots,0)^\top$.
In summary, the \mred{semidefinite} relaxation of~\eqref{P:SDP} can now be written as
\begin{align}
\label{P:SDP-matrix}
\min~~~\pin{Q}{X}& \nonumber \\
\st~~\pin{A_{0}}{X}&=1 \\
\pin{A_{ij}}{X}&\leq \beta_{ij} \quad\forall j=l_i,\dots,u_i\quad\forall i=1,\dots,n \nonumber \\
X&\succeq0. \nonumber
\end{align}
The following observation is crucial for the algorithm
presented in this paper.
\begin{lemma}
  \label{lemma:rank}
  All constraint matrices~$A_{ij}$ have rank one or two. The rank
  of~$A_{ij}$ is one if and only if
  \begin{itemize}
  \item[(a)] the facet is upper bounding, i.e.,
   $j=u_i$, and~$\beta_{iu_i}=\frac 14(l_i-u_i)^2$, or
  \item[(b)] the facet is lower bounding, i.e.,
   $j<u_i$, and~$\beta_{ij}=-\frac 14$.
  \end{itemize}
\end{lemma}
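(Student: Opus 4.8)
The plan is to reduce the rank of each $(n+1)\times(n+1)$ matrix $A_{ij}$ to that of a $2\times2$ matrix and then to read off the rank-one case from a determinant condition. Since, by the formulas given just above the lemma, every $A_{ij}$ has all its nonzero entries inside the principal submatrix indexed by $\{0,i\}$, deleting the remaining (identically zero) rows and columns does not change the rank. Hence $\rank(A_{ij})=\rank M_{ij}$, where
$$M_{ij}=\begin{pmatrix}(A_{ij})_{00}&(A_{ij})_{0i}\\(A_{ij})_{0i}&(A_{ij})_{ii}\end{pmatrix}.$$
In particular $\rank(A_{ij})\le 2$. Moreover, in both the lower and the upper bound case one has $(A_{ij})_{ii}=\pm1\ne0$, so $M_{ij}\ne0$ and thus $\rank(A_{ij})\ge1$. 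Therefore the rank is one or two, and it equals one precisely when $\det M_{ij}=0$.

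Next I would evaluate $\det M_{ij}$ in the two cases. For a lower bounding facet ($j<u_i$), substituting $(A_{ij})_{00}=\beta_{ij}-j(j+1)$, $(A_{ij})_{0i}=j+\tfrac12$, $(A_{ij})_{ii}=-1$ gives $\det M_{ij}=-(\beta_{ij}-j(j+1))-(j+\tfrac12)^2$, and expanding $(j+\tfrac12)^2=j^2+j+\tfrac14$ makes the $j$-terms cancel, leaving $\det M_{ij}=-\beta_{ij}-\tfrac14$; this vanishes iff $\beta_{ij}=-\tfrac14$, which is case~(b). For the upper bounding facet ($j=u_i$), substituting $(A_{ij})_{00}=\beta_{iu_i}+l_iu_i$, $(A_{ij})_{0i}=-\tfrac12(l_i+u_i)$, $(A_{ij})_{ii}=1$ gives $\det M_{ij}=(\beta_{iu_i}+l_iu_i)-\tfrac14(l_i+u_i)^2=\beta_{iu_i}-\tfrac14(l_i-u_i)^2$, using $l_iu_i-\tfrac14(l_i+u_i)^2=-\tfrac14(l_i-u_i)^2$; this vanishes iff $\beta_{iu_i}=\tfrac14(l_i-u_i)^2$, which is case~(a).

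There is no real obstacle here: the argument is just the two determinant computations together with the reduction to the $2\times2$ block. The only point worth stating explicitly is that passing to the $\{0,i\}$-submatrix preserves the rank, which is immediate because all rows and columns of $A_{ij}$ other than those indexed by $0$ and $i$ vanish. I would keep the write-up short, essentially displaying $M_{ij}$ in each case and the simplified value of its determinant.
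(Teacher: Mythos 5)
Your argument is correct: restricting to the $\{0,i\}$ principal block preserves the rank because all other rows and columns of $A_{ij}$ vanish, the block is nonzero since $(A_{ij})_{ii}=\pm1$, and your two determinant evaluations ($-\beta_{ij}-\tfrac14$ for lower bounding facets and $\beta_{iu_i}-\tfrac14(l_i-u_i)^2$ for the upper bounding facet) are both right, giving exactly conditions (b) and (a). The paper states this lemma without proof, so there is nothing to compare against; your reduction to the $2\times2$ block is the natural route and is consistent with the factorizations $A_{ij}=E_{ij}IC_{ij}$ and $A_{ij}=(A_{ij})_{ii}vv^\top$ that the paper uses in its appendix.
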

This property of the constraint matrices will be exploited later when
solving the dual problem of~\eqref{P:SDP-matrix} using a coordinate-wise
approach, leading to a computationally cheap update at each iteration
and an easy computation of the exact step size.

\subsection{Dual problem}

In order to derive the dual of Problem~\eqref{P:SDP-matrix}, we first introduce the
linear operator~$\funjc{\mathcal{A}}{S_{n+1}}{\Rbb^{m+1}}$ as
\[
\mathcal{A}(X):=
\begin{pmatrix}
\pin{A_{0}}{X}\\
\pin{A_{ij}}{X}_{j\in \{l_i,\dots,u_i\}, i\in \{1, \dots, n\}}
\end{pmatrix}
.
\]
Moreover, a dual variable~$y_{0}\in \Rbb$ is associated with the
constraint~$\pin{A_{0}}{X}=1$ and a dual variable~$y_{ij}\leq 0$ with
the constraint~$\pin{A_{ij}}{X}\leq \beta_{ij}$, for all~$j$ and~$i$, and~$y\in\Rbb^{m+1}$
is defined as
\[
y:=
\begin{pmatrix}
y_0\\
(y_{ij})_{j\in \{l_i,\dots,u_i\}, i\in \{1, \dots, n\}}
\end{pmatrix}.
\]
We thus obtain the
dual semidefinite program of Problem~\eqref{P:SDP-matrix} as
\begin{align}
\label{P:Dsdp}
\max~\quad \pin{b}{y}\quad & \nonumber \\
\st \quad  Q - \mathcal{A}^\top y &\succeq 0 \\
y_{0} &\in \Rbb \nonumber\\
y_{ij}&\leq0 \quad \forall j=l_{i},\dots,u_i \quad\forall i=1,\dots, n\nonumber,
\end{align}
the vector~$b\in \Rbb^{m+1}$ being defined as~$b_0=1$
and~$b_{ij}=\beta_{ij}$.

We conclude this section by emphasizing some characteristics of any
feasible solution of Problem~\eqref{P:SDP-matrix} that motivate the use of a  coordinate-wise optimization
method to solve the dual
problem~\eqref{P:Dsdp}; see~\cite{iscopaper} for a proof.
\begin{lemma}
\label{Lem:ActiveIneq}
Let~$X^*$ be a feasible solution of
Problem~\eqref{P:SDP-matrix}. For~$i\in\{1,\dots,n\}$, consider the
active set
\[
\mathcal{A}_i=\{j\in\{l_i,\dots,u_i\}\mid
\pin{A_{ij}}{X^*}=\beta_{ij}\}
\]
corresponding to variable~$i$. Then
\begin{itemize}
\item[(i)]  for all~$i\in\{1,\dots,n\}$,~$|\mathcal{A}_i|\le 2$, and
\item[(ii)] if~$|\mathcal{A}_i|=2$, then
 ~$x_{ii}^*=(x_{0i}^*)^2$ and~$x_{0i}^*\in D_i$.
\end{itemize}
\end{lemma}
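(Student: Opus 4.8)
The plan is to reduce the claim to elementary planar geometry of the polytope $P(D_i)$. Fix $i\in\{1,\dots,n\}$. The constraints of Problem~\eqref{P:SDP-matrix} indexed by this $i$ involve only the entries $x_{00}^*,x_{0i}^*,x_{ii}^*$ of $X^*$, and $x_{00}^*=1$ since $X^*$ is feasible. Substituting $x_{00}^*=1$ into $\pin{A_{ij}}{X^*}\le\beta_{ij}$, the terms carrying $\beta_{ij}$ cancel on both sides, and one recovers precisely the inequality system of Lemma~\ref{Lem:polyhedralformofP(Di)} in the two scalars $(x_{0i}^*,x_{ii}^*)$. Hence, writing $p:=(x_{0i}^*,x_{ii}^*)\in\Rbb^2$, feasibility of $X^*$ restricted to this block is equivalent to $p\in P(D_i)$, and $j\in\mathcal{A}_i$ holds if and only if $p$ lies on the $j$-th facet in the description of Lemma~\ref{Lem:polyhedralformofP(Di)}. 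Observe that the inequality $x_{ii}^*\ge(x_{0i}^*)^2$ coming from $X^*\succeq0$ is automatically implied here, since $P(D_i)=\cv\{(t,t^2)\mid t\in D_i\}$ is contained in the convex epigraph of $t\mapsto t^2$; so the semidefiniteness of $X^*$ plays no further role in this lemma.

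Next I record the facial structure of $P(D_i)$. Because $t\mapsto(t,t^2)$ parametrizes the strictly convex parabola, each point $(t,t^2)$ with $t\in D_i$ is an extreme point of $P(D_i)$, and these are exactly its vertices; correspondingly, assuming $n_i=|D_i|\ge3$ so that $P(D_i)$ is two-dimensional, the $n_i$ inequalities of Lemma~\ref{Lem:polyhedralformofP(Di)} are exactly its facets, each lower facet joining two consecutive vertices and the upper facet joining the two extreme vertices. (The binary case $n_i=2$ is degenerate, $P(D_i)$ being a segment, and is to be read under the convention fixed in the remark following Lemma~\ref{Lem:polyhedralformofP(Di)}.) For part~(i): a point of a convex polygon lies on at most two of its edges --- none if it is in the interior, one if it is in the relative interior of an edge, exactly two if it is a vertex --- and by strict convexity of the parabola the facets in our list have pairwise distinct supporting lines, hence are pairwise distinct as subsets. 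Therefore $|\mathcal{A}_i|\le2$.

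For part~(ii): if $|\mathcal{A}_i|=2$, then $p$ lies on two distinct facets of $P(D_i)$; two distinct edges of a convex polygon intersect in at most one point, which is necessarily a common vertex, so $p$ is a vertex of $P(D_i)$. By the previous paragraph $p=(t,t^2)$ for some $t\in D_i$, whence $x_{0i}^*=t\in D_i$ and $x_{ii}^*=t^2=(x_{0i}^*)^2$, which is~(ii). There is no deep obstacle in this argument; the only points that call for care are the algebraic substitution in the first step --- where it matters that the $\beta$-shift cancels exactly because $x_{00}^*=1$, which is also the reason introducing $\beta$ leaves the primal feasible region unchanged --- and the bookkeeping of the low-dimensional case $n_i\le2$, the underlying planar geometry itself being entirely routine.
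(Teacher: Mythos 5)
Your argument is correct and is surely the intended one: the paper itself gives no proof of Lemma~\ref{Lem:ActiveIneq} (it defers to the reference \cite{iscopaper}), and the reduction you make --- substitute $x_{00}^*=1$ so the $\beta$-shifts cancel, identify the active constraints with the facets of the planar polytope $P(D_i)$ from Lemma~\ref{Lem:polyhedralformofP(Di)}, and invoke the fact that a point of a convex polygon lies on at most two facets, with equality only at a vertex $(t,t^2)$, $t\in D_i$ --- is exactly the computation the statement is built on. You are also right to flag the binary case $n_i=2$: there the two inequalities sum to an equation, so every point of the segment has $|\mathcal{A}_i|=2$ while part~(ii) fails off the endpoints; this is a degeneracy of the lemma's statement itself (acknowledged in the remark after Lemma~\ref{Lem:polyhedralformofP(Di)}) rather than a gap in your proof, which is complete for $n_i\ge 3$.
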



\noindent
Lemma~\ref{Lem:ActiveIneq}\,(ii) allows
to deduce integrality of certain primal variables. If for some
primal variable, two of the corresponding dual
variables are non-zero in an optimal dual solution, then this primal
variable will be integer and hence feasible for the underlying problem.

\subsection{Primal and dual strict feasibility}
\label{Sec:strictlyfeas}
We next show that both Problem~\eqref{P:SDP-matrix} and its dual,
Problem~\eqref{P:Dsdp}, are strictly feasible.
Using this we can conclude that strong duality
holds and that both problems attain their optimal solutions.

\begin{theorem}
\label{Th:primalstrictfeas}
\mred{Problem~\eqref{P:SDP-matrix} is strictly feasible.}
\end{theorem}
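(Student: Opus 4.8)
The plan is to exhibit an explicit matrix $X$ in the relative interior of the feasible set of~\eqref{P:SDP-matrix}, i.e.\ a positive definite $X\succ 0$ with $\pin{A_0}{X}=1$ and $\pin{A_{ij}}{X}<\beta_{ij}$ strictly for every $i$ and $j$. A natural candidate is a matrix of the form $X=\begin{pmatrix}1 & \mu^\top \\ \mu & Z\end{pmatrix}$, where $\mu_i\in\Rbb$ is chosen to be (close to) the midpoint $\tfrac12(l_i+u_i)$ of $D_i$, and $Z$ is chosen so that the diagonal entries $z_{ii}$ strictly exceed $\mu_i^2$; concretely one can take $\mu=(\mu_1,\dots,\mu_n)^\top$, pick the diagonal $z_{ii}=\mu_i^2+\varepsilon_i$ with small $\varepsilon_i>0$, and set the off-diagonal entries of $Z$ to $\mu_i\mu_j$ (so that $Z=\mu\mu^\top+\Dg{\varepsilon}$), which makes $X=\begin{pmatrix}1\\ \mu\end{pmatrix}\begin{pmatrix}1\\ \mu\end{pmatrix}^\top+\begin{pmatrix}0&0\\0&\Dg{\varepsilon}\end{pmatrix}$, a sum of a rank-one positive semidefinite matrix and a positive semidefinite perturbation. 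To make $X$ strictly positive definite rather than merely positive semidefinite, one should additionally perturb the $(0,0)$-block, e.g.\ take $X=vv^\top+\Dg{(\varepsilon_0,\varepsilon_1,\dots,\varepsilon_n)}$ with $v=(1,\mu_1,\dots,\mu_n)^\top$ scaled so that $x_{00}=1$ is restored after the perturbation.

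The key steps are, in order: (1) write down the candidate $X$ as above and verify $X\succ 0$, which is immediate since it is a positive semidefinite rank-one term plus a positive diagonal; (2) rescale so that $\pin{A_0}{X}=x_{00}=1$ exactly; (3) evaluate $\pin{A_{ij}}{X}$ using the explicit non-zero entries of $A_{ij}$ recorded just before Lemma~\ref{lemma:rank}. For a lower bounding facet this evaluates to $(\beta_{ij}-j(j+1))x_{00}+(2j+1)x_{0i}-x_{ii}$, and substituting $x_{00}=1$, $x_{0i}\approx\mu_i$, $x_{ii}\approx\mu_i^2+\varepsilon_i$ gives $\beta_{ij}-\big(j(j+1)-(2j+1)\mu_i+\mu_i^2+\varepsilon_i\big)=\beta_{ij}-\big((\mu_i-j-\tfrac12)^2-\tfrac14+\varepsilon_i\big)$. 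Since $\mu_i$ is the midpoint of $\{l_i,\dots,u_i\}$ and $j$ ranges over $l_i,\dots,u_i-1$, the squared term $(\mu_i-j-\tfrac12)^2$ is the squared distance from the midpoint to a half-integer strictly inside the interval, hence is at most $\big(\tfrac{u_i-l_i-1}{2}\big)^2<\big(\tfrac{u_i-l_i}{2}\big)^2$; so for small enough $\varepsilon_i$ the bracketed expression is strictly less than $\tfrac14(u_i-l_i)^2$. One then checks that Lemma~\ref{Lem:polyhedralformofP(Di)} guarantees the midpoint $(\mu_i,\mu_i^2)$ lies in the \emph{interior} of $P(D_i)$, which is exactly the statement that all these inequalities are strict at the midpoint; the upper bounding facet is handled the same way, giving $x_{ii}-(l_i+u_i)x_{0i}+l_iu_i\approx \mu_i^2-(l_i+u_i)\mu_i+l_iu_i+\varepsilon_i=(\mu_i-l_i)(\mu_i-u_i)+\varepsilon_i<0$ for small $\varepsilon_i$ since $l_i<\mu_i<u_i$.

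The only real subtlety is the degenerate binary case $|D_i|=2$: there $P(D_i)$ is a segment with empty interior in $\Rbb^2$, so one cannot satisfy both facet inequalities strictly with a two-dimensional point — but since $n_i=2$ makes the two facets collapse to the single equation $x_{ii}-(2l_i+1)x_{0i}=-l_i(l_i+1)$ (as remarked after Lemma~\ref{Lem:polyhedralformofP(Di)}), strict feasibility for \emph{inequalities} is automatic once one realizes that in the matrix formulation this still appears as two inequalities $\pin{A_{ij}}{X}\le\beta_{ij}$ whose feasible region has nonempty interior in $S_{n+1}$ because $x_{ii}$ is free of the $x_{0i}$-plane constraint — more carefully, one takes $x_{ii}$ slightly above $(x_{0i})^2$ so that both $\pin{A_{ij}}{X}<\beta_{ij}$ hold; the relation $x_{ii}>(x_{0i})^2$ (rather than equality) is precisely what separates the matrix from the rank-one boundary and is compatible with both inequalities being strict. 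I expect this case distinction, and the bookkeeping needed to confirm that the midpoint choice with a small positive diagonal perturbation simultaneously satisfies \emph{all} $m+1$ (in)equalities strictly while keeping $X\succ0$, to be the main thing to get right; the linear-algebra verifications themselves are routine given the explicit entries of $A_{ij}$ and $A_0$ already provided.
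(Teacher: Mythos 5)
Your overall strategy coincides with the paper's: put $x_{0i}$ at the midpoint $\mu_i=\tfrac12(l_i+u_i)$, take $X$ to be a rank-one matrix plus a positive diagonal perturbation in the lower block, and get $X\succ 0$ via the Schur complement. The two proofs differ only in how the diagonal entries $x_{ii}$ are chosen, and that is precisely where your argument breaks down.

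Choosing $x_{ii}=\mu_i^2+\varepsilon_i$ with $\varepsilon_i>0$ \emph{small} does not satisfy the lower bounding facets in general. The lower envelope of $P(D_i)$ at $x_{0i}=\mu_i$ is $\max_j\{(2j+1)\mu_i-j(j+1)\}=\mu_i^2+\tfrac14-\min_j(\mu_i-j-\tfrac12)^2$; when $u_i-l_i$ is odd, $\mu_i$ is a half-integer, the minimum is $0$, and the envelope equals $\mu_i^2+\tfrac14$. Concretely, for $D_i=\{0,1,2,3\}$ the facet $-x_{ii}+3x_{0i}\le 2$ at $x_{0i}=1.5$ forces $x_{ii}\ge 2.5=\mu_i^2+\tfrac14$, so any $\varepsilon_i<\tfrac14$ makes $X$ \emph{infeasible}, not merely non-strict. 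Relatedly, your claim that $(\mu_i,\mu_i^2)$ lies in the interior of $P(D_i)$ is false: every point of $P(D_i)$ satisfies $x_{ii}\ge x_{0i}^2$ with equality only at vertices, and the lower envelope lies strictly above the parabola at non-integer abscissae. Your verification for the lower facets also checks the wrong inequality — you bound the bracketed expression from \emph{above} by $\tfrac14(u_i-l_i)^2$, whereas strict feasibility of a lower facet requires it to be \emph{positive}. The correct requirement is $\tfrac14-\min_j(\mu_i-j-\tfrac12)^2<\varepsilon_i<\tfrac14(u_i-l_i)^2$, i.e.\ $x_{ii}$ strictly between the lower and upper envelopes at $\mu_i$; the paper achieves this by taking $x_{ii}$ to be the \emph{average} of the two envelope values, which lands in this window whenever $|D_i|\ge 3$. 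Finally, your resolution of the binary case is also incorrect: for $|D_i|=2$ the two facet inequalities are negatives of each other, so no choice of $x_{ii}$ can make both strict; what survives (and what strict feasibility in the Slater sense actually requires) is a feasible point with $X\succ 0$, which the envelope-average construction still delivers there since $x_{ii}=x_{0i}^2+\tfrac14>x_{0i}^2$.
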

\begin{proof}
  Consider the functions~$l_i(x)$ and~$u_i(x)$ bounding~$x_{ii}$ in terms
  of~$x_{0i}$, given by the upper and the lower bounding facets
  described in Lemma~\ref{Lem:polyhedralformofP(Di)}:
  \begin{eqnarray*}
    l_{i}\colon [j,j+1]  & \rightarrow \Rbb, & l_i(x) := (2j+1)x -j(j+1), \quad j=l_i,\dots,u_i-1\\
    u_{i}\colon [l_i,u_i] & \rightarrow \Rbb, & u_i(x) := (l_i+u_i)x -l_iu_i.
  \end{eqnarray*}
  Now, define~$x\in \Rbb^{n+1}$ by $x_0:=1$ and~$x_i:=\tfrac 12(l_i+u_i)$ and let the matrix $X^0$ be defined as follows
  $$x^0_{ij}:=\begin{cases}\begin{array}{ll}
    x_ix_j & \text{if }i\neq j\\
    \tfrac 12(l_i(x_i)+u_i(x_i)) & \text{otherwise.}
  \end{array}\end{cases}$$
  By the Schur complement, now $X^0\succ 0$ if and only if
  $$X^0_{\{1,\dots,n\},\{1,\dots,n\}}-X^0_{\{1,\dots,n\},0}X^0_{0,\{1,\dots,n\}}\succ
  0\;,$$ where $X^0_{I,J}$ refers to the submatrix of $X^{0}$ containing
    rows and columns indexed by $I$ and $J$, respectively. 
The latter matrix is a diagonal matrix with entries~$\tfrac
12(l_i(x_i)+u_i(x_i))-x_i^2>0$\mred{, so that the semidefinite constraint $X^0\succeq 0$ is strictly satisfied. Moreover, by construction it is clear
  that~$X^0$ satisfies all affine-linear constraints of Problem~\eqref{P:SDP-matrix}.}
\qed\end{proof}

\begin{theorem}
  \label{lem:initial}
  Problem~\eqref{P:Dsdp} is strictly feasible.
\end{theorem}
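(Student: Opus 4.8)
The plan is to exhibit an explicit strictly feasible point for Problem~\eqref{P:Dsdp}, i.e.\ a vector $y$ with $y_{ij}<0$ for all $i,j$ (strict complementarity on the sign constraints), $y_0\in\Rbb$ arbitrary, and $Q-\mathcal{A}^\top y\succ 0$. The natural idea is to take $y_{ij}=-\varepsilon$ for a small parameter $\varepsilon>0$ for every lower and upper bounding facet, and to choose $y_0$ afterwards to fix up the $(0,0)$-entry. The point is that $\mathcal{A}^\top y=\sum_{ij}y_{ij}A_{ij}+y_0A_0$, and each block of variables $i$ only involves the three entries $x_{00},x_{0i},x_{ii}$, so $Q-\mathcal{A}^\top y$ decomposes, after a Schur complement on the $(0,0)$-entry, into the scalar $(0,0)$-entry and a collection of effects on the $2\times2$ principal submatrices indexed by $\{0,i\}$.

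The key steps, in order, are as follows. First I would compute $-\sum_{ij}y_{ij}A_{ij}=\varepsilon\sum_{ij}A_{ij}$ entrywise, using the explicit formulas for $(A_{ij})_{00},(A_{ij})_{0i},(A_{ij})_{ii}$ given before Lemma~\ref{lemma:rank}. Summing over $j=l_i,\dots,u_i-1$ (lower facets) and $j=u_i$ (upper facet) for a fixed $i$, the $x_{ii}$-coefficients are $-1$ from each of the $n_i-1$ lower facets and $+1$ from the single upper facet, contributing $-(n_i-2)\varepsilon$ to the $(i,i)$-entry; the $x_{0i}$-coefficients sum telescopically; and the $x_{00}$-contributions accumulate into the $(0,0)$-entry, which is then absorbed by the free variable $y_0$. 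Second, I would invoke the Schur complement criterion, exactly as in the proof of Theorem~\ref{Th:primalstrictfeas}: with $y_0$ chosen so that the $(0,0)$-entry of $Q-\mathcal{A}^\top y$ is strictly positive, $Q-\mathcal{A}^\top y\succ0$ is equivalent to positive definiteness of the Schur complement on the lower $n\times n$ block. For $\varepsilon$ small this Schur complement is a perturbation of order $\varepsilon$ (plus the $y_0$-correction, which is a rank-one term) of $\hat Q$ up to the $(0,0)$-absorption; the honest way to handle the non-convexity of $\hat Q$ is to note that we do not need $\hat Q$ itself to be positive definite — after the Schur complement with respect to a sufficiently large $(0,0)$-entry $t$, the lower block becomes $\hat Q-(\text{something})+\tfrac1t(\cdots)$, and one instead argues directly: choose $y_0$ large enough (equivalently the $(0,0)$-entry $t$ large enough) that the rank-one shift dominates, making the whole $(n{+}1)\times(n{+}1)$ matrix diagonally dominant or otherwise manifestly positive definite.

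Concretely, the cleanest route is: fix $y_{ij}=-\varepsilon<0$; then $Q-\mathcal{A}^\top y$ has lower-right block $\hat Q+\varepsilon M$ for a fixed symmetric matrix $M$ depending only on the data, off-diagonal $(0,i)$-entries that are bounded, and $(0,0)$-entry equal to $\hat c + y_0 + \varepsilon c_0$ for a constant $c_0$. Now pick $y_0$ so that this $(0,0)$-entry equals some large $t>0$; the Schur complement is $\hat Q+\varepsilon M - \tfrac1t vv^\top$ where $v\in\Rbbn$ collects the $(0,i)$-entries. Since $\varepsilon$ and $v$ are fixed once $\varepsilon$ is fixed, and $\tfrac1t vv^\top\to 0$ as $t\to\infty$, this does \emph{not} obviously give positive definiteness — the obstacle is precisely that $\hat Q$ may be indefinite. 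The resolution, and the step I expect to be the main obstacle, is to \emph{not} send the $(0,0)$-entry to $+\infty$ but rather to exploit that we may first add a positive multiple of the identity to the lower block by choosing $\varepsilon$ appropriately relative to $n_i$: if $n_i\geq 3$ the diagonal contribution $-(n_i-2)\varepsilon$ has the wrong sign, so instead one perturbs the $y_{ij}$ non-uniformly (e.g.\ take the upper-facet dual variable more negative than the lower-facet ones) so that the net $(i,i)$-contribution is a prescribed positive number $\rho$; then the lower block is $\hat Q+\rho I + (\text{bounded off-diagonal perturbation of order }\varepsilon\text{ of the }(0,i)\text{ entries})$, and the $(0,0)$-entry is set by $y_0$. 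Taking $\rho>\|\hat Q\|$ makes $\hat Q+\rho I\succ0$, and then choosing the $(0,0)$-entry $t$ large enough makes the Schur complement $\hat Q+\rho I-\tfrac1t vv^\top\succ 0$. Since all the dual sign constraints $y_{ij}<0$ are strict and the semidefinite constraint is strict, $y$ is strictly feasible, which proves the theorem. Finally I would remark that combined with Theorem~\ref{Th:primalstrictfeas} this yields, by conic duality, strong duality and attainment of both optima, as announced before the two theorems.
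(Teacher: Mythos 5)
Your proof is correct and uses essentially the same mechanism as the paper: make the upper-facet dual variables sufficiently negative so that $-\sum_{ij}y_{ij}A_{ij}$ adds a positive multiple of the identity to the lower-right $n\times n$ block dominating $\lambda_{\min}(\hat Q)$, use the free variable $y_0$ to set the $(0,0)$-entry, and conclude via the Schur complement with respect to that entry. The only differences are cosmetic -- the paper puts the negativity solely on the upper facets with explicit constants ($\tilde y=\min\{\lambda_{\min}(\hat Q)-1,0\}$ and a $(0,0)$-entry of $1+\|v\|^2$) and recovers strictness of the sign constraints by a final $-\epsilon\1$ perturbation, whereas you distribute strict negativity over all facets from the outset and take the parameters ``large enough.''
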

\begin{proof}
If~$Q\succ0$, we have that~$y^{0}=0$ is a feasible solution of
Problem~\eqref{P:Dsdp}.
 Otherwise, define~$a\in\Rbb^n$ by~$a_i=(A_{iu_i})_{0i}$ for
$i=1,\dots,n$. Moreover, define
\begin{align*}
\tilde{y}&:=\min\{\lambda_{min}(\hat Q)-1,0\}, \\
y_0&:=\hat c-\tilde y \sum_{i=1}^n(A_{iu_i})_{00}-1-(\tfrac{1}{2}\hat l-\tilde{y}a)^{\top}(\tfrac{1}{2}\hat l-\tilde{y}a),
\end{align*}
and~$y^{0}\in\Rbb^{m+1}$ as
\[
y^{0}:=
\begin{pmatrix}
y_{0}\\
(y_{ij})_{j\in \{l_i,\dots,u_i\}, i\in \{1, \dots, n\}}
\end{pmatrix},
\quad
y_{ij}=
\begin{cases}
\tilde{y}, &  j=u_i, i=1,\dots, n\\
0, & \text{otherwise.}
\end{cases}
\]
We have~$y^{0}_{ij}\leq0$ by construction, so it remains to
show that~$Q-\mathcal{A}^\top y^{0}\succ0$. To this end, first note that
\begin{equation*}
\tilde c:=\hat c-y_{0}-\tilde y
\sum_{i=1}^n(A_{iu_i})_{00}=1+(\tfrac{1}{2}\hat l-\tilde{y}a)^{\top}(\tfrac{1}{2}\hat l-\tilde{y}a)>
0\;.
\end{equation*}
By definition,
\begin{align*}
\label{Eq:001}
Q-\mathcal{A}^\top y^{0} &=Q-y_0A_0-\tilde y\sum_{i=1}^nA_{iu_i}
\\ &= Q-y_0A_0-\tilde y
\begin{pmatrix}
\sum_{i=1}^n(A_{iu_i})_{00} & a^\top\\
a & I_n
\end{pmatrix}
\\ &=
\begin{pmatrix}
\tilde c & (\tfrac{1}{2}\hat l-\tilde{y}a)^\top\\
\tfrac{1}{2}\hat l-\tilde{y}a & \hat Q-\tilde yI_{n}
\end{pmatrix}\;.
\end{align*}
Since $\tilde c>0$, by the Schur complement the last matrix is positive definite if
\[
(\hat Q -\tilde y I_n)-\tfrac{1}{\tilde c}(\tfrac{1}{2}\hat l-\tilde{y}a)(\tfrac{1}{2}\hat l-\tilde{y}a)^{\top}\succ 0.
\]
Denoting~$B:=(\tfrac{1}{2}\hat l-\tilde{y}a)(\tfrac{1}{2}\hat l-\tilde{y}a)^{\top}$,
we have
$\lambda_{max}(B)=(\tfrac{1}{2}\hat l-\tilde{y}a)^{\top}(\tfrac{1}{2}\hat l-\tilde{y}a)\geq0$
and thus
\begin{align*}
\lambda_{min}\left((\hat Q -\tilde y I_n)-\tfrac{1}{\tilde c}B\right)
&\geq\lambda_{min}(\hat Q -\tilde y I_n)+\tfrac{1}{\tilde c}\lambda_{min}(-B)\\
&=\lambda_{min}(\hat Q) -\tilde
y-\frac{\lambda_{max}(B)}{1+\lambda_{max}(B)}>0
\end{align*}
by definition of~$\tilde y$. We have found $y^0$ such that $y^0\leq0$
and~$Q-\aop^\top y^0\succ0$, hence we know that there exists $\epsilon>0$ small
enough such that $y^0-\epsilon\1$ is strictly feasible, i.e., such
that~$y^0-\epsilon\1<0$ and~$Q-\aop^\top(y^0-\epsilon\1)\succ0$.
\qed\end{proof}

\begin{coro}
Both Problem~\eqref{P:SDP-matrix} and its dual~\eqref{P:Dsdp} admit
optimal solutions, and there is no duality gap.
\end{coro}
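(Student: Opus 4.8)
The plan is to deduce the statement from the strong duality theorem for conic (here, semidefinite) programs, whose standard sufficient hypothesis is strict feasibility of one of the two problems (Slater's constraint qualification), which we have just established on both sides.

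First I would record weak duality. For any $X$ feasible for~\eqref{P:SDP-matrix} and any $y$ feasible for~\eqref{P:Dsdp} we have $\pin{Q}{X}-\pin{b}{y}=\pin{Q-\aop^\top y}{X}+\sum_{i,j}y_{ij}(\beta_{ij}-\pin{A_{ij}}{X})$, using $\pin{A_0}{X}=1$, $b_0=1$ and $b_{ij}=\beta_{ij}$. Both summands are nonnegative: the first because $X\succeq 0$ and $Q-\aop^\top y\succeq 0$, and the second because $y_{ij}\le 0$ while $\beta_{ij}-\pin{A_{ij}}{X}\ge 0$. Hence $\pin{Q}{X}\ge\pin{b}{y}$. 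Since both problems are feasible by Theorems~\ref{Th:primalstrictfeas} and~\ref{lem:initial}, this shows the optimal value of~\eqref{P:SDP-matrix} is bounded below and that of~\eqref{P:Dsdp} is bounded above; in particular both optimal values are finite.

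Next I would invoke the conic duality theorem in both directions. Applied to~\eqref{P:SDP-matrix}, which is strictly feasible by Theorem~\ref{Th:primalstrictfeas} and has finite optimal value, it yields that the dual~\eqref{P:Dsdp} attains its maximum and that the two optimal values coincide, i.e., there is no duality gap. Applied to~\eqref{P:Dsdp}, which is strictly feasible by Theorem~\ref{lem:initial} and likewise has finite value, the same theorem gives that~\eqref{P:SDP-matrix} attains its minimum. Combining the two conclusions, both problems admit optimal solutions and the optimal values are equal.

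There is no real obstacle here: the only step requiring an argument rather than a citation is the finiteness of the optimal values, and that follows at once from the weak duality inequality above together with the feasibility of both problems proved in the preceding two theorems. The remainder is a direct application of the standard strong duality result for semidefinite programming under Slater's condition.
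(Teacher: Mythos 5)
Your proposal is correct and follows essentially the same route the paper intends: the corollary is stated without an explicit proof precisely because it is the standard conic duality consequence of the strict feasibility established in Theorems~\ref{Th:primalstrictfeas} and~\ref{lem:initial}. Your additional step of recording weak duality to justify finiteness of both optimal values before invoking Slater-based strong duality in each direction is a reasonable and correct way to make that implicit argument explicit.
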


\section{A coordinate ascent method}
\label{Sec:generalalgo}
We now present a coordinate-wise optimization method for solving the dual
problem~\eqref{P:Dsdp}. It is motivated by Algorithm~2
proposed in~\cite{HongboDong(2014)} and exploits the specific structure
of Problem~\eqref{P:SDP-matrix},
namely a small total number of (active) constraints, see Lemma~\ref{Lem:ActiveIneq}\,(i), and low rank
constraint matrices that appear in the semidefinite relaxation. As in~\cite{HongboDong(2014)}, the first step is to
introduce a barrier term in the objective function of
Problem~\eqref{P:Dsdp} to  model the semidefinite
constraint~$Q-\mathcal{A}^\top y\succeq 0$. We obtain
\begin{align}
\label{P:Dsdp-barrier}
\max~\quad f(y;\sigma)
&:=\pin{b}{y}+\sigma \log\det(Q-\mathcal{A}^\top y)  \nonumber \\
\st \  \  Q-\mathcal{A}^\top y &\succ 0 \\
 y_{0} &\in \Rbb  \nonumber \\
y_{ij}&\leq0 \quad \forall j=l_{i},\dots,u_i \quad\forall i=1,\dots, n\nonumber
\end{align}
for~$\sigma>0$. The barrier term tends to~$-\infty$
if the smallest eigenvalue of~$Q-\aop^\top y$ tends to zero, in other words,
if~$Q-\aop^\top y$ approaches the boundary of the semidefinite cone.
Therefore, the role of the barrier term is to prevent that dual variables will
leave the set $\{y\in\Rbb^{m+1}\mid Q-\mathcal{A}^\top y \succ0\}$.
We will see later that we do not need to introduce a barrier term for the non-negativity
constraints $y_{ij}\leq0$, as they can be dealt with directly.

Observe that~$f$ is strictly concave, indeed it is a sum of a linear function
and the~$\log\det$ function, which is a strictly concave function in the interior of the
positive semidefinite cone; see e.g.,~\cite{Helmberg(2000)}.

\begin{theorem}
\label{Th:LevelSets}
For all~$\sigma>0$ and~$z\in\Rbb$, the level set
\[
\mathscr{L}_f(z)
:= \{y_0\in\Rbb, y_{ij}\leq0 \mid Q-\aop^\top y \succ 0,
f(y;\sigma) \geq z \}
\]
of Problem~\eqref{P:Dsdp-barrier} is compact.
\end{theorem}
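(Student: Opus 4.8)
The plan is to show that $\mathscr{L}_f(z)$ is closed and bounded. Closedness is straightforward once boundedness is in hand: $f(\cdot;\sigma)$ is continuous on the open set $\{y \mid Q-\aop^\top y \succ 0\}$, and the barrier term $\sigma\log\det(Q-\aop^\top y)$ tends to $-\infty$ as $y$ approaches the boundary of that set; so a sequence in $\mathscr{L}_f(z)$ cannot converge to a boundary point (there $f\to-\infty<z$), and combined with boundedness this gives that any limit point stays in the open feasibility region and satisfies $f\ge z$ by continuity, together with the closed constraints $y_{ij}\le 0$. Hence the real work is the bound.

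For boundedness, the key observation is that on $\mathscr{L}_f(z)$ the linear part $\pin{b}{y}$ is bounded below by $z - \sigma\log\det(Q-\aop^\top y)$, and I want to combine this with an upper bound on $\log\det(Q-\aop^\top y)$ to control $\pin{b}{y}$ from below, while separately controlling it from above; then the sign structure of $y$ (all $y_{ij}\le 0$, only $y_0$ free) pins down each coordinate. Concretely: first I would bound $\log\det(Q-\aop^\top y)$ from above. Write $M := Q - \aop^\top y = Q - y_0 A_0 - \sum_{ij} y_{ij} A_{ij}$. Since $\aop$ restricted to the active inequality constraints has a recognizable structure (each $A_{ij}$ acts only on the $\{0,i\}$ block), I would estimate $\trace(M)$ linearly in the entries of $y$: $\trace(M) = \trace(Q) - y_0 - \sum_{ij} y_{ij}\trace(A_{ij})$. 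Because each $y_{ij}\le 0$ and the signs of $\trace(A_{ij})=(A_{ij})_{00}+(A_{ij})_{ii}$ can be read off from the explicit formulas in Section~\ref{Sec:matrixform}, this already shows $\trace(M)$ grows at most linearly as the $|y_{ij}|$ grow. By the AM–GM inequality $\log\det M \le (n+1)\log\!\big(\tfrac{1}{n+1}\trace M\big)$ on the PSD cone, so $\log\det M$ grows at most logarithmically in $\|y\|$. Plugging into $f(y;\sigma)\ge z$ gives $\pin{b}{y} \ge z - \sigma(n+1)\log\!\big(\tfrac{1}{n+1}\trace M\big)$, i.e.\ $\pin{b}{y}$ is bounded below by something that grows like $-\log\|y\|$.

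Now I split into the two directions for each dual coordinate. For the inequality-duals, $b_{ij} = \beta_{ij}$ and $y_{ij}\le 0$: if some $y_{ij}\to -\infty$, I must show the other terms cannot compensate. The direction of the semidefinite constraint matters here — I would argue that pushing a single $y_{ij}$ to $-\infty$ eventually destroys positive definiteness of $M$ (since $A_{ij}\neq 0$ and $-y_{ij}A_{ij}$ contributes a rank-one or rank-two term of a fixed sign on the $\{0,i\}$ block that, unbounded, forces an eigenvalue to $\pm\infty$ in a way incompatible with $M\succ0$ together with the logarithmic-growth bound on $\trace M$); alternatively and more cleanly, use strict dual feasibility (Theorem~\ref{lem:initial}) to get a point $\bar y$ with $Q-\aop^\top\bar y\succ0$ and argue that $\langle \aop(\bar X), y\rangle$ for a suitable strictly feasible primal $\bar X$ (Theorem~\ref{Th:primalstrictfeas}) bounds $\pin{b}{y}$ from above: from $M\succeq 0$ and $\bar X\succ 0$ we get $0\le\pin{M}{\bar X} = \pin{Q}{\bar X} - \langle y,\aop(\bar X)\rangle$, hence $\langle y,\aop(\bar X)\rangle \le \pin{Q}{\bar X}$, and since $\aop(\bar X)$ has strictly positive entries in the inequality slots (strict primal feasibility: $\pin{A_{ij}}{\bar X} < \beta_{ij}$ means the slack is positive, so $\aop_{ij}(\bar X)>0$... careful with signs) each $-y_{ij}$ is bounded above. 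Combined with the lower bound on $\pin{b}{y}$ from the barrier, this squeezes every $y_{ij}$ into a bounded interval, and then $y_0$ is bounded because $b_0=1$ and everything else in $\pin{b}{y}$ is now bounded.

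The main obstacle I anticipate is getting the sign bookkeeping exactly right in that squeezing argument: the inequality-duals satisfy $y_{ij}\le 0$, the constraint matrices $A_{ij}$ have entries of mixed sign depending on whether the facet is upper- or lower-bounding, and the strictly feasible primal point $\bar X$ from Theorem~\ref{Th:primalstrictfeas} must be shown to satisfy \emph{strict} inequality $\pin{A_{ij}}{\bar X} < \beta_{ij}$ for \emph{every} $ij$ — which follows from its construction with $x_{ii}^0$ strictly between the two facet values, but needs to be spelled out. Once strict primal feasibility in every inequality is confirmed, the inner-product inequality $\pin{Q}{\bar X} \ge \langle y,\aop(\bar X)\rangle$ does all the heavy lifting and boundedness follows; the barrier bound is then only needed for the remaining free coordinate $y_0$ (or, alternatively, one can avoid the AM–GM step entirely and get $y_0$ bounded below directly from $f(y;\sigma)\ge z$ plus an upper bound on $\log\det M$).
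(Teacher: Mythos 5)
Your proposal is correct in outline but follows a genuinely different route for the boundedness part than the paper. Both arguments hinge on the same key pairing: take a strictly feasible primal point $X^0\succ 0$ from Theorem~\ref{Th:primalstrictfeas} (whose construction indeed gives \emph{strict} inequality $\pin{A_{ij}}{X^0}<\beta_{ij}$ in every slot, as you suspected) and pair it with $M:=Q-\aop^\top y\succeq 0$ to obtain $\pin{Q}{X^0}-\pin{b}{y}\ge\pin{M}{X^0}\ge\lambda_{\min}(X^0)\lambda_{\max}(M)$, which is exactly the paper's inequality~\eqref{Eq:feassetbounded}. From there the paper argues non-quantitatively: the level set is convex and closed, so it suffices to rule out unbounded rays, which requires a separate lemma ($\pin{b}{y}\neq 0$ for recession directions with $\aop^\top y=0$, again via $X^0$) and a limit comparison of $\log\det$ against the linear term along a ray. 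You instead make the bound quantitative via AM--GM, $\log\det M\le(n+1)\log\bigl(\trace M/(n+1)\bigr)$ with $\trace M$ affine in $y$, and squeeze the coordinates directly; this avoids the recession-cone case analysis entirely and is arguably cleaner.

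Two points need repair in your sketch. First, the sign step you flag: it is not true that $\pin{A_{ij}}{X^0}>0$; what is positive is the slack $s_{ij}:=\beta_{ij}-\pin{A_{ij}}{X^0}>0$. The correct squeeze is $\pin{b}{y}=\langle y,\aop(X^0)\rangle+\sum_{ij}y_{ij}s_{ij}\le\pin{Q}{X^0}-\pin{M}{X^0}-\underline{s}\sum_{ij}|y_{ij}|$ with $\underline{s}:=\min_{ij}s_{ij}>0$. Second, your upper bound on $\pin{b}{y}$ controls only $\sum_{ij}|y_{ij}|$, while $\trace M$ (and hence your bound on $\log\det M$) also involves the free variable $y_0$, so ``combine the two bounds'' is mildly circular as stated. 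The clean fix is to keep the term $\pin{M}{X^0}\ge\lambda_{\min}(X^0)\trace M$ in the upper bound: then $z\le f(y;\sigma)\le\pin{Q}{X^0}-\lambda_{\min}(X^0)\trace M-\underline{s}\sum_{ij}|y_{ij}|+\sigma(n+1)\log\bigl(\trace M/(n+1)\bigr)$, and since $t\mapsto-\lambda_{\min}(X^0)\,t+\sigma(n+1)\log\bigl(t/(n+1)\bigr)$ is bounded above on $(0,\infty)$ and tends to $-\infty$ at both ends, this simultaneously bounds $\sum_{ij}|y_{ij}|$ and confines $\trace M$ (hence all eigenvalues of $M$, hence $\|M\|$) to a compact set; $y_0$ is then bounded because $y_0A_0=Q-M-\sum_{ij}y_{ij}A_{ij}$ and $A_0\neq 0$. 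With these repairs your argument is complete and gives a valid alternative to the paper's proof.
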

\begin{proof}
  First note that~$\mathscr{L}_f(z)$ is closed. Indeed, for any convergent sequence in~$\mathscr{L}_f(z)$, the limit~$\bar y$ satisfies~$Q-\aop^\top \bar y \succeq 0$ and
$f(\bar y;\sigma) \geq z$. Hence~$Q-\aop^\top \bar y \succ 0$ and thus $\bar y\in \mathscr{L}_f(z)$.

For the following, define $\mathcal{N}:=\{y\in\Rbb^{m+1}\mid y_{ij}\leq0\}$. We show that for all
$y\in\mathcal{N} \setminus \{0\}$ with~$\aop^\top y=0$,
it holds that~$\pin{b}{y}\neq0$. For this, assume that there exists
$y\in\mathcal{N}\setminus \{0\}$ such that~$\aop^\top y=0$
and~$\pin{b}{y}=0$. Then we can choose
$i'\in\{1,\dots,n\}$ and~$j'\in\{l_{i'},\dots,u_{i'}\}$ such that
$y_{i'j'}<0$. Defining~$\delta_0=\tfrac {y_0}{-y_{i'j'}}$ and $\delta_{ij}=\tfrac {y_{ij}}{-y_{i'j'}}\le 0$ for $ij\neq i'j'$, we obtain
\[
A_{i'j'}= \delta_0A_0 +\sum_{ij\neq i'j'} \delta_{ij}A_{ij}
\ \text{ and } \
b_{i'j'}=\delta_0b_0+\sum_{ij\neq i'j'} \delta_{ij}b_{ij}\;.
\]
By Theorem~\ref{Th:primalstrictfeas}, we know that there exists a strictly
feasible solution~$X^0\succ0$ of Problem~\eqref{P:SDP-matrix}, for which
\[
\pin{A_0}{X^0}= b_0 \quad\text{and}\quad
\pin{A_{ij}}{X^0}< b_{ij} \ \forall  ij.
\]
Thus
\begin{eqnarray*}
  b_{i'j'} & > & \pin{A_{i'j'}}{X^0}
   =  \delta_0\pin{A_0}{X^0}+\sum_{ij\neq i'j'} \delta_{ij} \pin{A_{ij}}{X^0}\\
  & \geq & \delta_0b_0+\sum_{ij\neq i'j'} \delta_{ij}b_{ij}= b_{i'j'},
\end{eqnarray*}
but this is a contradiction.
Secondly, observe that for all~$y\in\mathcal{N}$, it holds that
\begin{align*}
\pin{Q}{X^0} -\pin{y}{b} & \geq
\pin{Q}{X^0} -\pin{y}{\aop (X^0)} =\pin{Q -\aop^\top y}{ X^0}\\
&\geq \lambda_{\max}(Q -\aop^\top y)\lambda_{\min}(X^0).
\end{align*}
The last inequality follows by Lemma 1.2.4 in~\cite{Helmberg(2000)}.
We have that~$\lambda_{\min}(X^0)>0$ since~$X^0\succ0$. Thus
\begin{equation}
\label{Eq:feassetbounded}
\lambda_{\max}(Q -\aop^\top y)
\leq \frac{1}{\lambda_{\min}(X^0)}\big(\pin{Q}{X^0} -\pin{b}{y}\big).
\end{equation}

Since the level sets~$\mathscr{L}_f(z)$ are convex and closed,
in order to prove that they are bounded, it is enough to prove that they do not
contain an unbounded ray. We will prove thus
that for all feasible solutions~$\hat  y$ of Problem~\eqref{P:Dsdp-barrier},
and all~$y\in\mathcal{N}\setminus\{0\}$~there exists~$s$ such
that~$f(\hat  y+sy;\sigma)<z$ for all~$z\in \Rbb$.

First, consider the case when~$\aop^\top y=0$, then
\[
f(\hat  y +sy;\sigma)=\pin{b}{\hat  y}+ s\pin{b}{y}+\sigma \log\det(Q)
\]
and~$\pin{b}{y}\neq0$ as argued above.
Now, take the limit of~$f(\hat  y +sy;\sigma)$ for~$s\to\infty$:
if~$\pin{b}{y}>0$, then~$f(\hat  y +sy;\sigma)\to\infty$, but this contradicts
primal feasibility.
If, instead~$\pin{b}{y}<~0$, then~$f(\hat  y +sy;\sigma)\to-\infty$.

On the other hand,  if~$\aop^\top y\neq0$,  we may have
$\lambda_{\min}(Q-\aop^\top\hat  y -s^*\aop^\top y)=0$ for some~$s^*>0$,
and hence
\[
\lim_{s\to s^*} \log\det(Q-\aop^\top\hat  y -s\aop^\top y)=-\infty\;.
\]
Otherwise, $\lambda_{\min}(Q-\aop^\top\hat  y -s\aop^\top y)> 0$ for all~$s> 0$ and hence
\[
\lim_{s\to \infty} \lambda_{\max}(Q-\aop^\top \hat  y -s\aop^\top y)=\infty,
\]
and from~\eqref{Eq:feassetbounded} it follows that~$\pin{b}{\hat  y +s y}$
must tend to~$-\infty$ when~$s\to\infty$.
In the second case,
observe that~$p(s):=\det(Q-\aop^\top \hat  y -s\aop^\top y)$ is
a polynomial in~$s$,  and denote~$h(s):=\pin{b}{\hat  y+sy}
=\pin{b}{\hat  y}+\pin{b}{y}s$.
We have that
\[
\lim_{s\to\infty}\frac{\log p(s)}{h(s)}=
\lim_{s\to\infty}\frac{\frac{p'(s)}{p(s)}}{\pin{b}{y}}=
\lim_{s\to\infty}\frac{p'(s)}{\pin{b}{y}p(s)}=0.
\]
This means that~$h(s)$ dominates~$\log p(s)$ when~$s\to\infty$.
Thus~$f(\hat  y+sy)\to-\infty$ for~$s\to\infty$.
\qed\end{proof}
The boundedness of the
upper level sets and the strict concavity of the objective function
guarantee the convergence of a coordinate ascent method, when
using the cyclical rule to select the coordinate direction and exact
line search to compute the step length~\cite{Ortega}.
However, for practical performance reasons, we apply  the
Gauss-Southwell rule to choose the coordinate direction.
Below we describe a general algorithm to solve Problem~\eqref{P:Dsdp-barrier}
in a coordinate-wise maximization manner.

\begin{algorithm}
\NoCaptionOfAlgo
\caption{\textbf{Outline of a barrier coordinate ascent algorithm for
Problem~\eqref{P:Dsdp}}}

\label{Algo:general}
\begin{algorithmic}[1]
\State \textbf{Starting point:} choose $\sigma >0$ and any feasible solution~$y$
of~\eqref{P:Dsdp}.
\State \textbf{Direction:} choose a coordinate direction~$e_{ij}$.
\State \textbf{Step size:} using exact line search, determine the
step length~$s$.
\State \textbf{Move along chosen coordinate:} ~$y \gets y+se_{ij}$.
\State \textbf{Decrease} the barrier parameter~$\sigma$.
\State \textbf{Go to (2)}, unless some stopping criterion is satisfied.
\end{algorithmic}
\end{algorithm}

\noindent
In the following sections, we will explain each step of this algorithm in
detail. We propose to choose the ascent direction based on a
coordinate-gradient scheme, similar to~\cite{HongboDong(2014)}.
We thus need to compute the  gradient of the
objective function of Problem~\eqref{P:Dsdp-barrier}.
See, e.g.,~\cite{Helmberg(2000)} for more details on how to compute
the gradient. We have that
~\[
\nabla_{y}f(y;\sigma)=b-\sigma\mathcal{A}((Q-\mathcal{A}^\top y)^{-1}).
\]
For the following, we denote
~\begin{align*}
W&:=(Q-\mathcal{A}^\top y)^{-1},
\end{align*}
so that
~\begin{equation}
\label{gradient}
\nabla_{y}f(y;\sigma)=b-\sigma\mathcal{A}(W)\;.
\end{equation}

We will see that, due to the particular structure of the gradient
of the objective function, the search of the ascent direction reduces
to considering only a few possible candidates among the
exponentially many directions.
In the chosen direction, we solve a one-dimensional minimization
problem to determine the step size.
It turns out that this problem has a closed form solution. Each
iteration of the algorithm involves the update of the vector of dual variables
and the computation of~$W$, i.e., the inverse of an~$(n+1)\times(n+1)$-matrix that only
changes by a factor of one constraint matrix when changing the value of the
dual variable. Thanks to the Woodbury formula
and to the fact that our constraint matrices are rank-two matrices,
the matrix~$W$ can be easily computed incrementally\mred{. Indeed,}
the updates at each iteration of the
algorithm can be performed in~$O(n^{2})$ time, which is crucial for the
performance of the algorithm proposed.
In fact, the special structure of Problem~\eqref{P:SDP-matrix}
can be exploited even more, considering the fact that the
constraint matrix associated with the dual variable~$y_0$ has rank-one,
and that every linear combination with another linear constraint matrix still
has rank at most two.
This suggests that we can perform a plane-search rather
than a line search, and simultaneously update two dual variables and still
recompute $W$ in~$O(n^2)$ time (see Section~\ref{Sec:SimultaneusUpdate}).
Thus, the main ingredient of our algorithm is the computationally
cheap update of~$W$ at each iteration and an easy computation of the optimal step size.

Before describing in detail how to choose an ascent direction and how to
compute the step size, we address the choice of a feasible starting point.
Compared to~\cite{HongboDong(2014)}, the situation is  more complex. We propose to choose
as starting point the vector $y^0$ defined in the proof of
Theorem~\ref{lem:initial}.
The construction described there can be directly implemented, however,
it involves the computation of the smallest eigenvalue of~$\hat Q$.

\subsection{Choice of an ascent direction}
\label{Sec:direction}

We improve the objective function coordinate-wise: at each
iteration~$k$ of the algorithm, we choose an ascent
direction~$e_{ij^{(k)}}\in\Rbb^{m+1}$ where~$ij^{(k)}$ is a coordinate
of the gradient with maximum absolute value
\begin{equation}
\label{Eqn:choosecoord}
ij^{(k)}\in\argmax_{ij}|\nabla_{y}f(y;\sigma)_{ij}|\;.
\end{equation}
However, moving a coordinate~$ij$ to a positive direction is allowed
only in case~$y_{ij}<0$, so that the coordinate~$ij^{(k)}$
in~\eqref{Eqn:choosecoord} has to be chosen among those satisfying either
$\nabla_{y}f(y;\sigma)_{ij}>0$ and $y_{ij}<0$, or
$\nabla_{y}f(y;\sigma)_{ij}<0$.
The entries of the gradient depend on the type of
inequality. By~\eqref{gradient}, we have
\begin{align*}
&\nabla_{y}f(y;\sigma)_{ij}=\beta_{ij}-\sigma \pin{W}{A_{ij}}\\
&=
\begin{cases}
\beta_{ij}-\sigma((\beta_{ij}-j(j+1))w_{00}+(2j+1)w_{0i} -w_{ii}) &  j=l_i,\dots,u_i-1,\\
\beta_{iu_i}-\sigma((\beta_{iu_i}+l_{i}u_{i})w_{00} -(l_{i}+u_{i})w_{0i} +w_{ii}) &  j=u_i.
\end{cases}
\end{align*}
The number of lower bounding facets for a single primal variable~$x_i$
is~$u_i-l_i$, which is not polynomial in the input size from a
theoretical point of view. From a practical point of view, a large
domain~$D_i$ may slow down the coordinate selection if all potential
coordinates have to be evaluated explicitly.

However, the regular structure of the gradient entries corresponding
to lower bounding facets for variable~$x_i$ allows to limit the search
to at most three candidates per variable. To this end, we define the
function
\[
\funcionjc{\varphi_i}{[l_i,u_i-1]}{\Rbb}{j}{\beta_{ij}-\sigma((\beta_{ij}-j(j+1))w_{00}+(2j+1)w_{0i} -w_{ii})\;.}
\]
Our task is then to find a minimizer of~$|\varphi_i|$
over~$\{l_i,\dots,u_i-1\}$. As $\varphi_i$ is a uni-variate quadratic function,
we can restrict our search to at most three candidates, namely the bounds~$l_i$
and~$u_i-1$ and the global minimizer 
$
\tfrac{w_{0i}}{w_{00}}-\tfrac{1}{2}
$
of~$\varphi_i$ rounded to the next integer. The latter value is only taken
into account if it belongs to~$\{l_i,\dots,u_i-1\}$.
In summary, taking into account also the upper bounding facets and the
coordinate zero, we need to test at most~$1+4n$ candidates in order to
solve~\eqref{Eqn:choosecoord}, independent of the sets~$D_i$.

\subsection{Computation of the step size}
\label{Sec:steponedimension}

We compute the step size~$s^{(k)}$ by exact line search in the chosen
direction. For this we need to solve the following one-dimensional
maximization problem
\begin{equation*}
s^{(k)}= \argmax_{s}\{f(y^{(k)} +se_{ij^{(k)}};\sigma)\mid
Q-\mathcal{A}^\top (y^{(k)}+se_{ij^{(k)}}) \succ 0 , s\leq -y_{ij^{(k)}} \}\;,
\end{equation*}
unless the chosen coordinate is zero, in which case $s$ does not
have an upper bound.
Note that the function~$s\mapsto f(y^{(k)} +se_{ij^{(k)}};\sigma)$
is strictly concave on~$\{s\in\Rbb\mid Q-\mathcal{A}^\top (y^{(k)}+se_{ij^{(k)}}) \succ
0\}$. We thus need to find
an~$s^{(k)}\in\Rbb$ satisfying the semidefinite constraint
$Q-\mathcal{A}^\top (y^{(k)}+s^{(k)}e_{ij^{(k)}}) \succ 0$ such that
either
\[
\nabla_{s}f(y^{(k)} +s^{(k)}e_{ij^{(k)}};\sigma)=0
\quad\text {and}\quad y_{{ij}^{(k)}}+s^{(k)}\leq 0
\]
or
\[
\nabla_{s}f(y^{(k)} +s^{(k)}e_{ij^{(k)}};\sigma)>0
\quad\text {and}\quad s^{(k)}=- y_{ij^{(k)}}^{(k)}.
\]
In order to simplify the notation, we omit the index~$(k)$ in the
following. From the definition, we have
\begin{align*}
f(y +se_{ij};\sigma)&= \pin{b}{y}+s\pin{b}{e_{ij}}
+\sigma\log\det(Q-\mathcal{A}^\top y -s\mathcal{A}^\top e_{ij})\\
&=\pin{b}{y}+\beta_{ij} s+\sigma\log\det(W^{-1} -sA_{ij}).
\end{align*}
Then, the gradient with respect to~$s$ is
\begin{equation}
\label{Eq:gradient}
\nabla_{s}f(y +se_{ij};\sigma)=\beta_{ij}-\sigma\pin{A_{ij}}{(W^{-1} -sA_{ij})^{-1}}.
\end{equation}
The next lemma states that if the coordinate direction is chosen as
explained in the previous section, and the gradient~\eqref{Eq:gradient} has
at least one root in the right direction of the line search, then
there exists a feasible step length.

\begin{lemma}\label{lem:existence}
~
\begin{itemize}
\item[(i)] Let the coordinate~$ij$ be chosen such
  that~$\nabla_{y}f(y;\sigma)_{ij}>0$ and~$y_{ij}<0$.
  If there exists~$s\geq0$ with $\nabla_{s}f(y +se_{ij};\sigma)=0$,
  then for the smallest~$s^+\ge 0$ with $\nabla_{s}f(y +s^+e_{ij};\sigma)=0$, one of the following holds:
  \begin{itemize}
  \item[(a)] $y+s^+e_{ij}$ is dual feasible
  \item[(b)] $s^+>-y_{ij}$, $y-y_{ij}e_{ij}$ is dual feasible, and $\nabla_{s}f(y-y_{ij}e_{ij};\sigma)>0$.
  \end{itemize}
\item[(ii)] Let the coordinate~$ij$ be chosen such
  that~$\nabla_{y}f(y;\sigma)_{ij}<0$.
  If there exists some~$s\leq0$ with~$\nabla_{s}f(y +se_{ij};\sigma)=0$,
  then for the biggest $s^-\le 0$ such that~$\nabla_{s}f(y +s^-e_{ij};\sigma)=0$
  it holds that $y+s^-e_{ij}$ is dual feasible.
\end{itemize}
\end{lemma}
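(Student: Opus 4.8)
The plan is to reduce both statements to a one-dimensional fact about the restriction $g(s) := f(y+se_{ij};\sigma)$ of the barrier objective to the chosen line, where $y$ denotes the current iterate, which is feasible for~\eqref{P:Dsdp-barrier}, so that $Q-\mathcal{A}^\top y\succ 0$. First I would fix the domain $I := \{\, s\in\mathbb{R}\mid Q-\mathcal{A}^\top(y+se_{ij})\succ 0\,\}$ on which $g$ is finite. As the preimage of the open convex cone of positive definite matrices under an affine map, $I$ is an open interval, and $0\in I$. By the strict concavity of $f$ on the interior of the feasible region noted before Theorem~\ref{Th:LevelSets}, $g$ is strictly concave and smooth on $I$; hence $g'$ is continuous and strictly decreasing on $I$ and has at most one zero. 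Also, whenever the hypotheses speak of an $s$ with $\nabla_s f(y+se_{ij};\sigma)=0$, this $s$ automatically lies in $I$, since the gradient~\eqref{Eq:gradient} is only defined there. Finally, comparing~\eqref{gradient} and~\eqref{Eq:gradient} at $s=0$ gives $g'(0)=\nabla_y f(y;\sigma)_{ij}$.

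For part~(i) I would start from $g'(0)=\nabla_y f(y;\sigma)_{ij}>0$. The hypothesis supplies a zero of $g'$ in $I\cap[0,\infty)$; by uniqueness of the zero and $g'(0)>0$, this zero is unique and strictly positive, so $s^+$ is well defined, $s^+>0$, and $s^+\in I$, i.e.\ $Q-\mathcal{A}^\top(y+s^+e_{ij})\succ 0$. I then split into two cases. If $s^+\le -y_{ij}$, then $(y+s^+e_{ij})_{ij}=y_{ij}+s^+\le 0$ and every other coordinate of $y$ is unchanged, so $y+s^+e_{ij}$ is dual feasible, which is case~(a). If instead $s^+>-y_{ij}$, then — using $-y_{ij}>0$ because $y_{ij}<0$ — the numbers $0$ and $s^+$ both lie in the interval $I$ with $0<-y_{ij}<s^+$, hence $-y_{ij}\in I$, i.e.\ $Q-\mathcal{A}^\top(y-y_{ij}e_{ij})\succ 0$; together with $(y-y_{ij}e_{ij})_{ij}=0$ this shows $y-y_{ij}e_{ij}$ is dual feasible. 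Moreover, since $g'$ is strictly decreasing and $-y_{ij}<s^+$, we get $\nabla_s f(y-y_{ij}e_{ij};\sigma)=g'(-y_{ij})>g'(s^+)=0$, which completes case~(b).

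Part~(ii) is the easier half because moving in the negative direction can never violate the sign constraint $y_{ij}\le 0$. Here $g'(0)=\nabla_y f(y;\sigma)_{ij}<0$, and the hypothesis gives a zero of $g'$ in $I\cap(-\infty,0]$; by uniqueness and $g'(0)<0$ this zero $s^-$ is unique and strictly negative, with $s^-\in I$, hence $Q-\mathcal{A}^\top(y+s^-e_{ij})\succ 0$. Since $(y+s^-e_{ij})_{ij}=y_{ij}+s^-\le 0$ (using $y_{ij}\le 0$, or just that $y_0$ is unconstrained in sign when $ij$ is the index $0$) and all other coordinates are unchanged, $y+s^-e_{ij}$ is dual feasible, as claimed.

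There is no serious obstacle here: the whole argument is bookkeeping with the sign of the strictly decreasing function $g'$. The one point that deserves a little care — and in particular explains the bifurcation in part~(i) — is checking that each proposed updated point actually remains in the domain of the barrier, i.e.\ that the associated matrix stays positive definite; but this needs no eigenvalue estimates, only the convexity of the interval $I$ and the fact that both $0$ and the relevant critical point belong to $I$.
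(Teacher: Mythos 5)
There is a genuine gap at the one step that carries the whole content of the lemma. You assert that any $s$ with $\nabla_{s}f(y+se_{ij};\sigma)=0$ ``automatically lies in $I$, since the gradient~\eqref{Eq:gradient} is only defined there.'' That is not true: \eqref{Eq:gradient} is the rational function $s\mapsto \beta_{ij}-\sigma\pin{A_{ij}}{(W^{-1}-sA_{ij})^{-1}}$, which is defined wherever $W^{-1}-sA_{ij}$ is merely invertible --- a set far larger than the interval $I$ on which the matrix is positive definite (and even $\log\det$ itself only needs a positive determinant, not positive definiteness). The way the lemma is invoked in Theorem~\ref{lem:onedimstep}, and the way the roots are actually computed (as zeros of the numerator of a quadratic rational function), make clear that the hypothesis ranges over roots of this algebraic expression, which may a priori sit outside the feasible region. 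Conclusion~(a), that $Q-\mathcal{A}^\top(y+s^+e_{ij})\succeq 0$ holds at the \emph{smallest} such root, is therefore exactly what has to be proved, and your argument assumes it. The paper closes this gap with a barrier argument: if positive definiteness failed at some $0<s'\le s^+$, then $f(y+se_{ij};\sigma)\to-\infty$ as $s\to s'$, so continuity of the derivative together with $\nabla_{y}f(y;\sigma)_{ij}>0$ forces a zero of the derivative strictly before $s'$, contradicting the minimality of $s^+$. The same point recurs in part~(ii), where you again take $s^-\in I$ for granted.

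The rest of your argument is sound and in places cleaner than the paper's: once $s^+\in I$ is known, using that $I$ is an interval to get $-y_{ij}\in I$ in case~(b), and using strict concavity (so $g'$ is strictly decreasing) to conclude $g'(-y_{ij})>g'(s^+)=0$, is a nice replacement for the paper's ``no zero in $[0,s^+)$ implies $g'>0$ there'' step. Your proof can be repaired by inserting the barrier argument above (or, equivalently, by observing that if $I\cap[0,\infty)=[0,s_{++})$ with $s_{++}<\infty$, then $g'\to-\infty$ at $s_{++}$, so $g'$ already has a zero inside $I$, which must then be the smallest nonnegative root of the rational expression). As written, however, the proof begs the question on its central claim.
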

\begin{proof}
For showing (i), we consider the cases $s^+\le -y_{ij}$ and
  $s^+> -y_{ij}$, implying~(a) and~(b), respectively. In the first
  case, we have~$y+s^+e_{ij}\in {\cal N}$, hence it remains to
  show~$Q-\mathcal{A}^\top(y+s^+e_{ij})\succeq 0$. Assuming the
  opposite, there would exist~$0< s'\le s^+$ with~$f(s,\sigma)\to
  -\infty$ for $s\to s'$.  From the continuous differentiability
  of~$f(s,\sigma)$ on the feasible region and since
  $\nabla_{y}f(y;\sigma)_{ij}>0$, there exists~$0\le s''\le s'$ with
  $\nabla_{s}f(y +s''e_{ij};\sigma)=0$, in contradiction to the
  minimality of~$s^+$.

Otherwise, if $s^+> -y_{ij}$, by the same reasoning, we may
  assume that $y+se_{ij}$ is dual feasible for all $s\in[0,s^+)$.
  Since there is no $s'\in [0,s^+)$ with $\nabla_{s}f(y
    +s'e_{ij};\sigma)=0$, we must have $\nabla_{s}f(y
    +s'e_{ij};\sigma)>0$ for all $s'\in [0,s^+)$, again by continuous
      differentiability and $\nabla_{y}f(y;\sigma)_{ij}>0$. Now $-y_{ij}\in[0,s^+)$ and hence
      (b) follows, which concludes the proof of~(i).

Assertion (ii) now follows analogously to the first part of (i), since we always have $s^-\le 0\le -y_{ij}$.
\qed\end{proof}

If in addition we exploit that the level sets of the function are bounded,
as shown by Theorem~\ref{Th:LevelSets}, then we can derive the following
theorem. It shows that we can always choose an appropriate step length by considering the roots of the gradient~\eqref{Eq:gradient}.
\begin{theorem}\label{lem:onedimstep}
~
\begin{itemize}
\item[(i)] Let the coordinate~$ij$ be chosen such
  that~$\nabla_{y}f(y;\sigma)_{ij}>0$ and~$y_{ij}<0$.
  If the gradient~\eqref{Eq:gradient} has at least one positive root, then for the smallest positive root~$s^+$, either~$y+s^+e_{ij}$~is dual
  feasible and~$\nabla_{s}f(y +s^+e_{ij};\sigma)=0$,
  or $y_{ij}+s^+>0$ and~$\nabla_{s}f(y -y_{ij}e_{ij};\sigma)>0$. Otherwise $\nabla_{s}f(y -y_{ij}e_{ij};\sigma)> 0$.
\item[(ii)] Let the coordinate~$ij$ be chosen
  such that~$\nabla_{y}f(y;\sigma)_{ij}<0$.
  Then the gradient~\eqref{Eq:gradient} has at least one negative root,
  and for the biggest negative root~$s^-$, we have that $y+s^-e_{ij}$~is dual
  feasible and $\nabla_{s}f(y +s^-e_{ij};\sigma)=0$.
\end{itemize}
\end{theorem}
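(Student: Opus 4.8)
The plan is to combine Lemma~\ref{lem:existence} (which handles the case when a root in the correct direction exists) with the compactness of the level sets from Theorem~\ref{Th:LevelSets} (which forces such a root to exist, or else guarantees that the boundary constraint $y_{ij}\le 0$ becomes binding with positive gradient). Throughout I would work with the univariate strictly concave function $g(s):=f(y+se_{ij};\sigma)$, defined on the open interval $I:=\{s\in\Rbb\mid Q-\mathcal{A}^\top(y+se_{ij})\succ 0\}$, which contains $s=0$. Since $\mathcal{A}^\top e_{ij}=A_{ij}\neq 0$, the polynomial $s\mapsto\det(W^{-1}-sA_{ij})$ is non-constant, so $I$ is a bounded-from-at-least-one-side open interval and $g(s)\to-\infty$ as $s$ approaches a finite endpoint of $I$. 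Note $\nabla_s g(s)=\nabla_y f(y+se_{ij};\sigma)_{ij}$, so the sign hypotheses on the coordinate gradient are exactly statements about $g'(0)$.

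For part~(ii): here $g'(0)=\nabla_y f(y;\sigma)_{ij}<0$. I first claim the gradient~\eqref{Eq:gradient} has a negative root. Suppose not; then by strict concavity $g'(s)<0$ for all $s\le 0$ in $I$, so $g$ is strictly increasing as $s$ decreases, and $g(s)\ge g(0)$ for all such $s$. The set $\{s\le 0\}\cap I$ is therefore contained in the level set $\mathscr{L}_f(g(0))$ restricted to the line $y+se_{ij}$; but if the left endpoint of $I$ were finite we would have $g(s)\to-\infty$ there, a contradiction, so the left endpoint is $-\infty$, and then $\{y+se_{ij}\mid s\le 0\}$ is an unbounded ray inside $\mathscr{L}_f(g(0))$ — note all these points lie in $\mathcal{N}$ since moving $y_{ij}$ in the negative direction preserves $y_{ij}\le 0$ — contradicting compactness (Theorem~\ref{Th:LevelSets}). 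Hence a negative root exists; let $s^-\le 0$ be the largest one. Since $s^-\le 0\le -y_{ij}$, the feasibility statement and $\nabla_s f(y+s^-e_{ij};\sigma)=0$ follow directly from Lemma~\ref{lem:existence}(ii).

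For part~(i): here $g'(0)>0$ and $y_{ij}<0$. If~\eqref{Eq:gradient} has a positive root, let $s^+>0$ be the smallest one; Lemma~\ref{lem:existence}(i) gives either alternative~(a), namely $y+s^+e_{ij}$ dual feasible with $\nabla_s f(y+s^+e_{ij};\sigma)=0$, or alternative~(b), namely $s^+>-y_{ij}$ (i.e.\ $y_{ij}+s^+>0$), $y-y_{ij}e_{ij}$ dual feasible, and $\nabla_s f(y-y_{ij}e_{ij};\sigma)>0$ — exactly the two cases in the statement. It remains to treat the case where~\eqref{Eq:gradient} has no positive root. Then by strict concavity $g'(s)>0$ for all $s\ge 0$ in $I$. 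If the right endpoint of $I$ were finite, $g$ would tend to $-\infty$ there while increasing — impossible; so the right endpoint is $+\infty$, and $\{y+se_{ij}\mid s\ge 0\}\subseteq I$ with $g$ increasing. Evaluating at $s=-y_{ij}>0$ gives $\nabla_s f(y-y_{ij}e_{ij};\sigma)=g'(-y_{ij})>0$, which is the claimed conclusion. (One should also observe that $g$ bounded below on $[0,\infty)$ by $g(0)$ together with the ray lying in $\mathscr{L}_f(g(0))$ would again contradict Theorem~\ref{Th:LevelSets} unless the ray leaves $\mathcal{N}$; but here it does leave $\mathcal{N}$ precisely once $s>-y_{ij}$, which is consistent and is exactly why only the gradient-sign conclusion, not feasibility of the whole ray, is asserted.)

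The main obstacle I anticipate is the bookkeeping in part~(i) when no positive root exists: one must be careful to distinguish ``$g$ has no critical point on $[0,\infty)$'' from ``$g$ is unbounded on $[0,\infty)$,'' and to invoke Theorem~\ref{Th:LevelSets} only for rays that stay inside $\mathcal{N}=\{y:y_{ij}\le0\}$ — the ray in direction $+e_{ij}$ leaves $\mathcal{N}$ after $s=-y_{ij}$, so compactness of the level set does not by itself forbid $g$ from increasing to $+\infty$; the correct deduction is only about the endpoint of $I$ and the value $g'(-y_{ij})$. Everything else is a routine packaging of Lemma~\ref{lem:existence} and the strict concavity of $g$.
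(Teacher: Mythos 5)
Your proof is correct and follows essentially the same route as the paper: Lemma~\ref{lem:existence} disposes of the cases where a root of the gradient exists in the required direction, continuity of the derivative handles the no-root case in part~(i), and compactness of the level sets (Theorem~\ref{Th:LevelSets}) forces the existence of a negative root in part~(ii), with your contradiction argument being just a repackaging of the paper's direct Rolle-type argument. One small point: the deduction ``$g'$ has no root on $\{s\le 0\}\cap I$ and $g'(0)<0$, hence $g'<0$ on all of $\{s\le 0\}\cap I$'' follows from the intermediate value theorem applied to the continuous function $g'$ on the interval $I$, not from strict concavity (which gives $g'(s)>g'(0)$ for $s<0$, i.e.\ the wrong direction); the conclusion is unaffected.
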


\begin{proof}
  The first part of~(i) follows directly from Lemma~\ref{lem:existence}\,(i). If the gradient~\eqref{Eq:gradient} has no positive root, continuous differentiability of $s\mapsto f(y +se_{ij};\sigma)$ together with $\nabla_{y}f(y;\sigma)_{ij}>0$ implies  $\nabla_{s}f(y +se_{ij};\sigma)> 0$ for all~$s\ge 0$.

  To show~(ii), consider the ray $y+s e_{ij}$, $s\le 0$. This ray belongs to~$\cal N$, but the level set~$\mathscr{L}_f(z)$ is bounded for $z:=f(y;\sigma)$ by Theorem~\ref{Th:LevelSets}. We derive that
$f(y+se_{ij};\sigma)<z$ for some~$s<0$. Again using continuous differentiability, we derive that there exists
    $s'\in(s,0)$ such that $\nabla_{s}f(y +s'e_{ij};\sigma)=0$. The remaining statements then follow from Lemma~\ref{lem:existence}\,(ii).
\qed\end{proof}
\mred{Theorem~\ref{lem:onedimstep} shows that we can always find an appropriate step length for the chosen coordinate~$ij$. If, according to the gradient~$\nabla_{y}f(y;\sigma)_{ij}$, we desire to increase variable~$ij$, then part~(i) shows that in all possible cases we can find a feasible step length: either it is the first root of the gradient or -- if this root is positive and hence infeasible, or if it does not exist -- we can stop when the variable turns zero. This case distinction is illustrated in Figure~\ref{Fig:optstep}, where we draw the gradient (vertical axis) in terms of the steplength (horizontal axis) and the point where variable~$ij$ turns zero is marked by a dashed line. When decreasing the variable~$ij$ as in part~(ii), the situation is simpler, as there exists no lower bound on the variables.}
\begin{figure}
\begin{center}
\subfigure[ ][$s^+\leq s$]{
\includegraphics{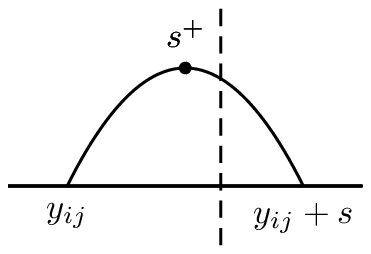}
}%
\subfigure[ ][$s^+ = -y_{ij}$]{
\includegraphics{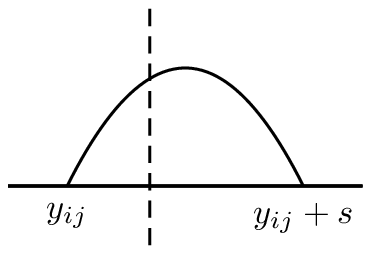}
}
\end{center}
\vspace{-0.5cm}
\caption{\label{Fig:optstep} Illustration of the existence of an optimal step size $s^+$, Theorem~\ref{lem:onedimstep}~(i)}
\end{figure}

Observe that the computation of the gradient requires to compute the inverse
of~$W^{-1} -sA_{ij}$,  it is worth mentioning that this is the crucial task since
it is a matrix of order~$n+1$. Notice\mred{,} however\mred{,} that~$W^{-1}$ is changed
by a rank-one or rank-two matrix~$sA_{ij}$; see Lemma~\ref{lemma:rank}.
Therefore, we will compute the inverse matrix~$(W^{-1} -sA_{ij})^{-1}$ using the
Woodbury formula for the rank-one or rank-two update.
\ifappendix
The computation is detailed in Appendix~\ref{Sec:stepsizeCD}.
\else
\mred{The detailed computation can be found in Chapter~5 of \cite{Montenegro(2017)}.}
\fi

\subsection{Algorithm overview and running time}
\label{Sec:overview}
Our approach to solve Problem~\eqref{P:Dsdp} is summarized in
Algorithm~\ref{algo:cd}.
\begin{algorithm}
\SetAlgoRefName{CD}
\DontPrintSemicolon
\KwIn{$Q\in S_{n+1}$}
\KwOut{A lower bound on the optimal value of Problem~\eqref{P:SDP-matrix}}
Use Theorem~\ref{lem:initial} to compute~$y^{(0)}$ such that~$Q-\mathcal{A}^\top y^{(0)} \succ 0$\;
Compute~$W^{(0)} \gets (Q-\mathcal{A}^\top y^{(0)})^{-1}$\;
\For{$k =0,1,2,\dots$} {
Choose a coordinate direction~$e_{ij^{(k)}}$ as described in Section~\ref{Sec:direction}\label{cd:algo:step4}\;
Compute the step size~$s^{(k)}$ as described in Section~\ref{Sec:steponedimension}\label{cd:algo:step5}\;
Update~$y^{(k+1)}\gets y^{(k)}+s^{(k)}e_{ij^{(k)}}$\label{cd:algo:step6}\;
Update~$W^{(k)}$ using the Woodbury formula \;
Update~$\sigma$\label{cd:algo:step8}\;
Terminate if some stopping criterion is met\label{cd:algo:step9}\;
}
\Return{$\pin{b}{y^{(k)}}$}\;
\caption{Barrier coordinate ascent algorithm for Problem~\eqref{P:Dsdp}}
\label{algo:cd}
\end{algorithm}
As already discussed in~\cite{iscopaper}, Algorithm~\ref{algo:cd} can be implemented such that its running time is~$O(n^3)$ for the preprocessing (Steps~1--2) and~$O(n^2)$ for each iteration (Steps~4--9), using the Woodbury formula and considering that  only~$O(n)$ candidates for
the coordinate selection have to be checked. Note that the vector~$y^{(k)}$ is dual feasible and hence yields a valid lower bound~$\pin{b}{y^{(k)}}$ at every iteration. Within a branch-and-bound framework, we may thus stop Algorithm~\ref{algo:cd} as soon as the current best upper bound is reached.

Otherwise, the algorithm can be stopped after a fixed number of
iterations or when other criteria show that only a small further
improvement of the bound can be expected.
The choice of an appropriate termination rule however is closely
related to the update of~$\sigma$ performed in
Step~\ref{cd:algo:step8}\mred{. This} is further discussed in Section~\ref{Sec:experiments}.

\subsection{Two dimensional approach}
\label{Sec:SimultaneusUpdate}

Algorithm~\ref{algo:cd} is based on the fact that all constraint matrices in~\eqref{P:SDP-matrix}  have rank
at most two, so that the matrix~$W^{(k)}$ can be updated  in~$O(n^2)$
time using the Woodbury formula. Considering  the special structure of the first constraint matrix~$A_0$, it is easy to verify that the rank of any linear combination of any constraint matrix~$A_{ij}$ with~$A_0$ still has rank at most two. In the following, we thus describe an extension of Algorithm~\ref{algo:cd} using a simultaneous update of both corresponding dual coordinates.
Geometrically, we thus search along the plane spanned by the
coordinates~$(e_0,e_{{ij}^{(k)}})$ rather than the line spanned by a
single coordinate~$e_{{ij}^{(k)}}$. For sake of readability,
we again omit the index~$(k)$ in the following.


Let~$ij$ be a given coordinate and denote by~$s$ the step size along
coordinate~$e_{ij}$ and by~$s_0$ the step size along~$e_0$. At each
iteration we then perform an update of the form $y\gets
y+s_0e_0+se_{ij}$.  The value of the objective function in the new
point is
\[
f(y +s_0e_0+se_{ij};\sigma)
= \pin{b}{y}+s_0+s\beta_{ij}+\sigma\log\det(W^{-1} -s_0A_0-sA_{ij})\;.
\]
To obtain a closed formula for the optimal step
length~$s_0$ in terms of a fixed step length~$s$, we exploit
the fact that the update of coordinate~$e_0$ is rank-one, and that the zero
coordinate does not have a sign restriction. Consider the gradient of
$f(y +s_0e_0+se_{ij};\sigma)$ with respect to~$s_0$:
\begin{equation}
\label{Eq:Grad_s0}
\nabla_{s_0}f(y +s_0e_0+se_{ij};\sigma)
= 1-\sigma\pin{A_0}{(W^{-1} -s_0A_0-sA_{ij})^{-1}}\;.
\end{equation}
Defining~$W(s):=(W^{-1} -sA_{ij})^{-1}$ and using the Woodbury formula for
rank-one update, we obtain
\begin{align*}
(W^{-1} -s_0A_0-sA_{ij})^{-1}  &= (W(s)^{-1} - s_0A_0)^{-1} \\
&= W(s)+\frac{s_0}{1-s_0w(s)_{00}} (W(s)e_{0}) (W(s)e_{0})^\top.
\end{align*}
Substituting the last expression in the gradient \eqref{Eq:Grad_s0} and
setting the latter to zero, we get
\[
s_0(s):= s_0 =\frac{1}{{w(s)}_{00}}-\sigma.
\]
It remains to compute~${w(s)}_{00}$, which can be done using the Woodbury
formula for rank-two updates. 
\ifappendix
See Appendix~\ref{Sec:stepsizeCD2D} for an explicit expression.
\else
\mred{See Chapter~5 of \cite{Montenegro(2017)} for an explicit expression.}
\fi
In summary, we have shown
\begin{lemma}
\label{lemma:steps02D}
Let~$s$ be a given step size along coordinate direction~$e_{ij}$, then
\begin{equation}
\label{Eq:steps0}
s_0 =\frac{1}{{w(s)}_{00}}-\sigma
\end{equation}
is the unique maximizer of~$f(y +s_0e_0+se_{ij};\sigma)$, and hence the
optimum step size along coordinate~$e_0$.
\end{lemma}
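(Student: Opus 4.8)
The plan is to combine strict concavity of the restricted objective with the explicit first‑order condition that is already sketched just before the lemma. Fix a feasible step size $s$ along $e_{ij}$, so that $W(s):=(W^{-1}-sA_{ij})^{-1}\succ 0$ is well defined, and set $g(s_0):=f(y+s_0e_0+se_{ij};\sigma)$. First I would record that $g$ is strictly concave on its effective domain: it is the sum of the affine function $s_0\mapsto \pin{b}{y}+s_0+s\beta_{ij}$ and $\sigma\log\det\!\big(W(s)^{-1}-s_0A_0\big)$, and since $A_0=e_0e_0^\top\neq 0$ and $\log\det$ is strictly concave on the positive definite cone, the restriction to this line is strictly concave. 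The effective domain is $\{s_0\in\Rbb : W(s)^{-1}-s_0e_0e_0^\top\succ 0\}$, which by the matrix determinant lemma (equivalently a $1\times 1$ Schur complement) is exactly the open interval $\{s_0 : s_0\,w(s)_{00}<1\}$, where $w(s)_{00}=e_0^\top W(s)e_0>0$ is a diagonal entry of the positive definite matrix $W(s)$. Because $y_0$ carries no sign restriction, the only constraint on $s_0$ is this positive definiteness condition; hence it suffices to produce a critical point of $g$ inside the interval, which by strict concavity is automatically the unique maximizer.

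Next I would carry out the first‑order computation as indicated. Differentiating gives $\nabla_{s_0}g(s_0)=1-\sigma\big\langle A_0,\,(W(s)^{-1}-s_0A_0)^{-1}\big\rangle$, and applying the Sherman–Morrison formula to the rank‑one update $A_0=e_0e_0^\top$ yields
\[
(W(s)^{-1}-s_0e_0e_0^\top)^{-1}=W(s)+\frac{s_0}{1-s_0w(s)_{00}}\,(W(s)e_0)(W(s)e_0)^\top .
\]
Taking the $(0,0)$ entry and using $(W(s)e_0)_0=w(s)_{00}$ gives $\big\langle A_0,(W(s)^{-1}-s_0A_0)^{-1}\big\rangle=\dfrac{w(s)_{00}}{1-s_0w(s)_{00}}$. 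Setting $\nabla_{s_0}g(s_0)=0$ then forces $1-s_0w(s)_{00}=\sigma w(s)_{00}$, i.e. $s_0=\frac{1}{w(s)_{00}}-\sigma$, which is precisely formula~\eqref{Eq:steps0}.

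Finally I would verify that this candidate lies in the effective domain, so that it is a genuine interior critical point. Substituting back, $1-s_0w(s)_{00}=\sigma w(s)_{00}>0$ because $\sigma>0$ and $w(s)_{00}>0$; equivalently $s_0=\frac{1}{w(s)_{00}}-\sigma<\frac{1}{w(s)_{00}}$. Hence $W^{-1}-s_0A_0-sA_{ij}=W(s)^{-1}-s_0A_0\succ 0$, $g$ is differentiable at $s_0$, and $\nabla_{s_0}g(s_0)=0$, so by strict concavity $s_0$ is the unique maximizer of $f(y+s_0e_0+se_{ij};\sigma)$ over $s_0$.

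\textbf{The main obstacle} is not the algebra but the admissibility check: a priori the critical point of the strictly concave \emph{extension} of $g$ could fall outside the region where $\log\det$ is finite, in which case the supremum would be an unattained boundary limit; this is exactly what the identity $1-s_0w(s)_{00}=\sigma w(s)_{00}>0$ rules out. A secondary point to state explicitly is that $w(s)_{00}>0$ (so the formula is well posed), which follows from $W(s)\succ 0$ and hence relies on $s$ being a feasible step length, i.e. $W^{-1}-sA_{ij}\succ 0$.
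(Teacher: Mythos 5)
Your proof is correct and follows essentially the same route as the paper, which derives \eqref{Eq:steps0} by setting the gradient \eqref{Eq:Grad_s0} to zero after applying the Woodbury (Sherman--Morrison) rank-one update to $(W(s)^{-1}-s_0A_0)^{-1}$. Your additional verification that the critical point satisfies $1-s_0w(s)_{00}=\sigma w(s)_{00}>0$, so that it lies inside the effective domain and strict concavity yields uniqueness, is a welcome piece of rigor that the paper leaves implicit.
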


The next task is to compute a step length~$s$ such that~$(s_0(s),s)$
is an optimal two-dimensional step in the coordinate plane spanned by
$(e_0,e_{{ij}})$. To this end, we consider the function
\[
g_{ij}(s) := f(y +s_0(s)e_0+se_{ij};\sigma)
\]
over the set~$\{s\in\Rbb\mid Q-\mathcal{A}^\top (y+s_0(s)e_0+se_{ij}) %
\succ0\}$ and solve
the problem
\begin{equation}
\label{P:simultLinesearch}
\max_{s}\;\{g_{ij}(s)\mid%
Q-\mathcal{A}^\top (y^{(k)}+s_0(s)e_0+se_{ij^{(k)}}) \succ 0 , s\leq -y_{ij}^{(k)} \}\;.
\end{equation}
Since the latter problem is uni-variate and differentiable, we need to find~$s\in\Rbb$ such that
either $g_{ij}'(s)=0$ and $s\leq -y_{ij}$ or $g_{ij}'(s) > 0$ and $s= -y_{ij}$.
The derivative of~$g_{ij}(s)$ is
\begin{equation}
\label{Eq:grad2d}
g_{ij}'(s)
=s_0'(s)+\beta_{ij} -\sigma \pin{s_0'(s)A_0+A_{ij}}{(W^{-1} -s_0(s)A_0-sA_{ij})^{-1}},
\end{equation}
which is a quadratic rational function.
The next lemma shows that at least one of the two roots of~$g_{ij}'(s)$
leads to a feasible update if the direction~$ij$ is an ascent direction.
Similar to Theorem~\ref{lem:onedimstep} in the one dimensional approach,
the proof is based on Theorem~\ref{Th:LevelSets}.

\begin{theorem}~\label{lem:steptwodim}
\begin{itemize}
\item[(i)] Let the coordinate~$ij$ be chosen such that~$g_{ij}'(0)>0$ and~$y_{ij}<0$.
  If~\eqref{Eq:grad2d} has at least one positive solution, then
  for the smallest such solution~$s^+$, either the point
  $y+s_0(s^+)e_0+s^+e_{ij}$ is dual feasible and~$g_{ij}'(s^+)=0$,
  or $y_{ij}+s^+>0$ and~$g_{ij}'(-y_{ij})>0$. Otherwise $g_{ij}'(-y_{ij})>0$.

\item[(ii)] Let the coordinate~$ij$ be such that~$g_{ij}'(0)<0$.
  The expression~\eqref{Eq:grad2d} has at least one negative solution,
  and for the biggest such solution~$s^-$, the point~$y+s_0(s^-)e_0+s^-e_{ij}$~is
  dual feasible and~$g_{ij}'(s^-)=0$.
\end{itemize}
\end{theorem}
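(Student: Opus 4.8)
Plan of proof. The idea is to reduce everything to the one–dimensional situation treated in Lemma~\ref{lem:existence} and Theorem~\ref{lem:onedimstep}, working with the curve $\gamma(s):=y+s_0(s)\,e_0+s\,e_{ij}$ and the scalar function $g_{ij}(s)=f(\gamma(s);\sigma)$ in place of the line $s\mapsto y+s\,e_{ij}$ and the map $f(y+se_{ij};\sigma)$. The proof then runs essentially parallel to the ones given there, with Theorem~\ref{Th:LevelSets} playing the same role.

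Step 1: properties of $g_{ij}$. Since $f$ is strictly concave and the coordinate $e_0$ carries no sign restriction, $g_{ij}$ is the partial maximum of $f$ over the single free variable $s_0$; it is therefore concave (indeed strictly concave, because the affine map $(s_0,s)\mapsto Q-\mathcal A^\top y-s_0A_0-sA_{ij}$ is injective), and its effective domain is the open interval $S:=\{s\in\Rbb\mid Q-\mathcal A^\top\gamma(s)\succ0\}$. We have $0\in S$: because $y$ is dual feasible, the strictly concave function $s_0\mapsto f(y+s_0e_0;\sigma)$ is finite at $s_0=0$ and attains its maximum at the interior point $s_0(0)$. By Lemma~\ref{lemma:steps02D}, $s_0(s)$ is the unique maximizer of $s_0\mapsto f(y+s_0e_0+se_{ij};\sigma)$ and, being a rational function, is smooth on $S$; hence $g_{ij}$ is continuously differentiable on $S$, and since the first-order condition $\nabla_{s_0}f(\gamma(s);\sigma)=0$ makes the $s_0'(s)$-terms in \eqref{Eq:grad2d} cancel, $g_{ij}'$ agrees on $S$ with \eqref{Eq:grad2d}, equivalently with $\nabla_sf(\gamma(s);\sigma)$; strict concavity makes $g_{ij}'$ strictly decreasing on $S$. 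Finally, whenever $s\le -y_{ij}$ the point $\gamma(s)$ lies in $\mathcal N=\{y'\mid y'_{ij}\le0\ \forall ij\}$, since $e_0$ is free and the $ij$-component of $\gamma(s)$ equals $y_{ij}+s\le0$.

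Step 2: part (ii). Put $z:=g_{ij}(0)=f(y+s_0(0)e_0;\sigma)$; the point $y+s_0(0)e_0$ lies in $\mathscr L_f(z)$, which is compact by Theorem~\ref{Th:LevelSets}. The piece of curve $\{\gamma(s)\mid s\le0,\ s\in S\}$ lies in $\mathcal N$. If $(-\infty,0]\subseteq S$ this piece is unbounded (its $ij$-component tends to $-\infty$), so compactness of $\mathscr L_f(z)$ forces $g_{ij}(s)<z$ for some $s<0$; otherwise $S$ has a finite left endpoint $s^\dagger\notin S$ and $g_{ij}(s)\to-\infty$ as $s\downarrow s^\dagger$, giving the same conclusion. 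Together with $g_{ij}'(0)<0$, continuous differentiability and the mean value theorem then produce a zero $s^-\in(-\infty,0)\cap S$ of $g_{ij}'$; since $g_{ij}'$ is strictly decreasing and $(s^-,0]\subseteq S$, the rational function \eqref{Eq:grad2d} has no root in $(s^-,0)$, so $s^-$ is its largest negative root, and $\gamma(s^-)$ is dual feasible because $s^-\in S$ and $s^-<0$. Part (i) is handled by the two-dimensional analogue of Lemma~\ref{lem:existence}(i): with $g_{ij}'(0)>0$, $y_{ij}<0$ and a positive root of \eqref{Eq:grad2d} given, let $s^+$ be the smallest one. If $s^+\le -y_{ij}$ then $\gamma(s^+)\in\mathcal N$, and $Q-\mathcal A^\top\gamma(s^+)\succeq0$ — otherwise $\gamma$ would leave the semidefinite cone at some $s'\in(0,s^+]$ with $g_{ij}(s)\to-\infty$ as $s\uparrow s'$, and $g_{ij}'(0)>0$ plus continuous differentiability would force a zero of $g_{ij}'$ (hence of \eqref{Eq:grad2d}) in $(0,s')\subseteq S$, contradicting minimality of $s^+$; thus $\gamma(s^+)$ is dual feasible and $g_{ij}'(s^+)=0$, which is case (a). If $s^+>-y_{ij}$, the same "no earlier exit" argument gives $[0,-y_{ij}]\subseteq S$, and since \eqref{Eq:grad2d} has no root in $(0,-y_{ij}]$, strict concavity and $g_{ij}'(0)>0$ give $g_{ij}'>0$ on $[0,-y_{ij}]$; in particular $g_{ij}'(-y_{ij})>0$ and $y_{ij}+s^+>0$, which is case (b). If \eqref{Eq:grad2d} has no positive root at all, the same reasoning again yields $[0,-y_{ij}]\subseteq S$ and $g_{ij}'>0$ on it, so $g_{ij}'(-y_{ij})>0$.

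Main obstacle. The delicate part is the bookkeeping between the rational function \eqref{Eq:grad2d}, which is defined (apart from poles) on all of $\Rbb$, and the feasibility interval $S$: one must ensure that the "smallest positive / largest negative root" named in the statement actually lies in $S$, so that the associated point satisfies the strict semidefinite constraint. Supporting this, one needs that $g_{ij}(s)\to-\infty$ whenever $\gamma(s)$ approaches an endpoint of $S$; for the one-dimensional line search this is immediate because $s\mapsto Q-\mathcal A^\top(y+se_{ij})$ is affine, whereas here $s_0(s)$ is only rational in $s$, so it requires a short argument — e.g.\ that $\max_{s_0}\lambda_{\min}(Q-\mathcal A^\top(y+s_0e_0+se_{ij}))$ is concave in $s$, positive at $0$, and vanishes at a finite endpoint of $S$, forcing $\gamma(s)$ to the boundary of the semidefinite cone. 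A minor secondary point is the $C^1$-identification of $g_{ij}'$ with \eqref{Eq:grad2d}, which rests on the smoothness of $s_0(\cdot)$ from Lemma~\ref{lemma:steps02D} and the cancellation of the $s_0'(s)$-terms via first-order optimality.
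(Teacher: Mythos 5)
Your proof is correct and follows essentially the route the paper intends: the paper gives no explicit proof of Theorem~\ref{lem:steptwodim}, stating only that it is analogous to Theorem~\ref{lem:onedimstep} and rests on Theorem~\ref{Th:LevelSets}, and your argument is exactly that reduction, carried out via the curve $s\mapsto y+s_0(s)e_0+se_{ij}$ and the two-dimensional analogue of Lemma~\ref{lem:existence}. The details you supply beyond the paper's sketch --- the envelope-type cancellation identifying $g_{ij}'$ with~\eqref{Eq:grad2d}, the strict concavity of the partial maximum, and the boundary behaviour of $g_{ij}$ at the endpoints of the feasibility interval (which, as you note, needs a short argument; it also follows directly from compactness of $\mathscr{L}_f(z)$, since a bounded-below trajectory approaching the endpoint would have a strictly feasible limit point, contradicting openness of the interval) --- are precisely the points the paper leaves implicit, and you handle them correctly.
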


It remains to discuss the choice of the coordinate~$ij$,
which is similar to the one-dimensional approach:
we choose the coordinate direction~$e_{{ij}}$ such that
\begin{equation}
\label{P:coordij2d}
ij\in\argmax_{ij}|g_{ij}'(0)|\;,
\end{equation}
where moving into the positive direction of a coordinate~$e_{ij}$
is allowed only if~$y_{ij}<0$, thus the candidates are those coordinates
satisfying
\[
(g_{ij}'(0)>0 \text{ and } y_{ij}<0)
\quad \text{ or }
\quad
g_{ij}'(0)<0.
\]
We have that
\[
g_{ij}'(0)=
\begin{cases}
j(j+1)-2\frac{w_{0i}}{w_{00}}j-\frac{w_{0i}}{w_{00}}-(\sigma w_{00}-1)\frac{w_{ii}^2}{w_{00}^2}+\sigma w_{ii} &  j=l_i,\dots,u_i-1,\\
l_{i}u_{i}+ \frac{w_{0i}}{w_{00}}(l_{i}+u_{i}) +(\sigma w_{00}-1)\frac{w_{ii}^2}{w_{00}^2}-\sigma w_{ii} &  j=u_i,
\end{cases}
\]
\ifappendix
see Appendix~\ref{Sec:stepsizeCD2D} again. 
\else
\mred{see Chapter~5 of \cite{Montenegro(2017)} again.}
\fi
Therefore, as before, we do not need to search over all potential
coordinates~$ij$, since the regular structure of~$g_{ij}'(0)$ for the
lower bounding facets again allows us to restrict the search to at
most three candidates per variable. Thus only~$4n$ potential
coordinate directions need to be considered.

Using these ideas, a slightly different version of
Algorithm~\ref{algo:cd} is obtained by changing
Steps~\ref{cd:algo:step4},~\ref{cd:algo:step5} and~\ref{cd:algo:step6}
adequately,
we call it Algorithm~CD2D.
In Section~\ref{Sec:experiments}, we compare Algorithm~\ref{algo:cd} and its
improved version, Algorithm~CD2D, experimentally.

\subsection{Primal solutions}
\label{Sec:primal}

This section contains an algorithm to compute an approximate solution
of Problem~\eqref{P:SDP-matrix} using the information given by the
dual optimal solution of Problem~\eqref{P:Dsdp}.  We will prove that
under some additional conditions the approximate primal solution
produced is actually the optimal solution, provided that an optimal
solution~$y^*$ for the dual problem~\eqref{P:Dsdp} is given.  First
note that the primal optimal solution~$X^*\in S_{n+1}^+$ must satisfy the
complementarity condition
\begin{equation}
\label{eq:complementaryslakness}
(Q-\mathcal{A}^\top y^*)X^* =0
\end{equation}
and the primal feasibility conditions~$X^*\succeq0$ and
\begin{equation}
\label{eq:activeconstraints}
\begin{cases}
\pin{A_0}{X^*} &= 1, \\
\pin{A_{ij}}{X^*} &= \beta_{ij} \quad \forall i,j\in \mathcal{A}(y^*),
\end{cases}
\end{equation}
where~$\mathcal{A}(y^*):=\{i,j\mid y_{ij}<0\}$.

Notice that in order to find a primal optimal solution~$X^*$, we need to
solve a semidefinite program, and this is in general computationally
too expensive.
Since this has to be done at every node of the branch-and-bound tree,
we need to devise an alternative method to compute an approximate matrix~$X$
that will be used mainly for taking a branching decision
in Algorithm Q-MIST.
The idea is to ignore the semidefinite constraint $X\succeq0$.
We thus proceed as follows. We consider the spectral
decomposition~$Q- \mathcal{A}^\top y^*=P\Dg{\lambda}P^\top$.
Since~$Q- \mathcal{A}^\top y^*\succeq0$, we have~$\lambda \geq 0$.
Define~$Z:=P^\top XP$, then~$X= PZP^\top$ and~\eqref{eq:complementaryslakness} is
equivalent to
\[
0=( P\Dg{\lambda}P^\top)(PZP^\top) = P\Dg{\lambda}ZP^\top.
\]
Since~$P$ is a regular matrix, the last equation implies that~$\Dg{\lambda}Z=0$,
which is at the same time equivalent to say that~$z_{ij}=0$
whenever~$\lambda_i>0$ or~$\lambda_j>0$.
Replacing also~$X=PZP^\top$ in~\eqref{eq:activeconstraints}, we have
\begin{align*}
1 &=\pin{A_0}{X} =\pin{A_{0}}{PZP^\top} = \pin{P^\top A_{0}P}{Z},\\
\beta_{ij} &=\pin{A_{ij}}{X} = \pin{A_{ij}}{PZP^\top}= \pin{P^\top A_{ij}P}{Z}.
\end{align*}
This suggests, instead of solving the system~\eqref{eq:complementaryslakness}
and~\eqref{eq:activeconstraints} in order to compute~$X$,
\mred{solving} the system above and then \mred{computing}~$X=PZP^\top$.
The system above can be simplified, since~$Z$ has a zero row/column for
each~$\lambda_l>0$.
Thus it is possible to reduce the dimension of the problem as follows:
let~$\bar A$ be the sub-matrix of~$A$ where all rows and columns~$l$
with~$\lambda_l > 0$ are removed; let~$r$
be the number of positive entries of~$\lambda$. \mred{Letting}~$Y\in S_{n+1-r}$,
we have that the system above is equivalent to
\begin{equation}
\label{eq:reducedsystem}
\begin{cases}
\pin{\overline{P^\top A_{0}P}}{Y} &= 1 \\
\pin{\overline{P^\top A_{ij}P}}{Y} &=\beta_{ij} \quad \forall i,j\in \mathcal{A}(y^*).
\end{cases}
\end{equation}
Then we can extend~$Y$ by zeros to obtain a matrix~$Z \in S_{n+1}$, and finally compute
$X = PZP^\top$. We formulate this procedure in Algorithm~\ref{algo:primalsol}.

\mred{In practice, since we use a barrier approach to solve the semidefinite program~\eqref{P:SDP-matrix}, no entry of~$\lambda$ will be exactly zero. However, it is easy to see that in theory at least one entry of~$\lambda$ must be zero in an optimal solution to~\eqref{P:SDP-matrix}.} In the implementation of the algorithm, we \mred{thus} consider the
smallest eigenvalue of~$Q-\mathcal{A}^\top y$ as zero, this means that $r$ is
at least one, and there may be more eigenvalues considered as zero,
depending on the allowed tolerance.

\begin{algorithm}
\DontPrintSemicolon
\KwIn{optimal solution $y^* \in \Rbb^{m+1}$ of Problem~\eqref{P:Dsdp}}
\KwOut{$X\in S_{n+1}$}
Compute~$P\in \Rbb^{(n+1)\times(n+1)}$ orthogonal and~$\lambda\geq0$ with~$Q-\mathcal{A}^\top y^* = P\Dg{\lambda}P^\top$\;
Find a solution~$Y\in S_{n+1-r}$ of the system of
equations~\eqref{eq:reducedsystem} \;
Set~$Z\in S_{n+1}$ as ~$z_{ij}=0$,~$\forall ij$, except for~$i,j=1,\dots,n+1-r$,
where~$z_{ij}=y_{ij}$ \;
Compute~$X=PZP^\top$\;
\Return{$X$}\;
\caption{ Compute approximate solution of~\eqref{P:SDP-matrix}
 from dual solution}
\label{algo:primalsol}
\end{algorithm}

Notice that we are not enforcing explicitly that~$Y\succeq 0$,
but if~$Y$ turns out to be positive semidefinite, then~$Z$ is positive
semidefinite and therefore~$X$ as well. We have the following theorem.

\begin{theorem}
Let~$y^*$ be a feasible solution of~\eqref{P:Dsdp} and
$X^*\in S_{n+1}$ the corresponding matrix produced by
Algorithm~\ref{algo:primalsol}.
If~$X^*\succeq 0$, then~$(X^*,y^*)$ are primal-dual optimal solutions
of Problems~\eqref{P:SDP-matrix}~and~\eqref{P:Dsdp}.
\end{theorem}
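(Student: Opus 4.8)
The plan is to verify that the matrix $X^*$ returned by Algorithm~\ref{algo:primalsol}, under the assumption $X^*\succeq 0$, satisfies primal feasibility for~\eqref{P:SDP-matrix} together with complementary slackness against $y^*$; by the strong duality guaranteed by the corollary following Theorem~\ref{lem:initial}, this suffices for optimality of both. First I would record what the algorithm enforces by construction. The spectral decomposition $Q-\mathcal{A}^\top y^*=P\Dg{\lambda}P^\top$ with $\lambda\ge 0$ is well-defined since $y^*$ is dual feasible. The matrix $Z$ has $z_{ij}=0$ whenever $i>n+1-r$ or $j>n+1-r$, i.e. exactly on the rows/columns corresponding to the positive entries of $\lambda$ (after reordering so that the zero eigenvalues come first). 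Hence $\Dg{\lambda}Z=0$, and therefore
\[
(Q-\mathcal{A}^\top y^*)X^* = P\Dg{\lambda}P^\top P Z P^\top = P\Dg{\lambda}Z P^\top = 0,
\]
which is precisely the complementarity condition~\eqref{eq:complementaryslakness}.

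Next I would check primal feasibility. The linear system~\eqref{eq:reducedsystem} solved in Step~2, transported back through $X^*=PZP^\top$ and the identity $\pin{P^\top A P}{Y}=\pin{A}{PZP^\top}$ (using that $Z$ agrees with $Y$ on the relevant block and is zero elsewhere, matching the deletion of rows/columns defining $\bar A$), gives $\pin{A_0}{X^*}=1$ and $\pin{A_{ij}}{X^*}=\beta_{ij}$ for all $ij\in\mathcal{A}(y^*)$. For the coordinates $ij\notin\mathcal{A}(y^*)$ we have $y^*_{ij}=0$, so the corresponding inequality constraint need not be active; but here I must still argue that $\pin{A_{ij}}{X^*}\le\beta_{ij}$ holds rather than being violated. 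The clean way is not to verify the remaining inequalities directly but to invoke complementarity: since $(Q-\mathcal{A}^\top y^*)X^*=0$ and $X^*\succeq 0$ (by hypothesis) and $Q-\mathcal{A}^\top y^*\succeq 0$, the pair satisfies the KKT conditions for~\eqref{P:SDP-matrix}/\eqref{P:Dsdp} as soon as $X^*$ is primal feasible. This is where the main subtlety lies, so let me address it head-on.

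The hard part is ensuring $\pin{A_{ij}}{X^*}\le\beta_{ij}$ for the inactive coordinates, i.e. genuine primal feasibility of $X^*$. I would handle this as follows. Consider the trace inner product of the complementarity identity with $y^*$ itself: from $(Q-\mathcal{A}^\top y^*)X^*=0$ we get $\pin{Q-\mathcal{A}^\top y^*}{X^*}=0$, hence $\pin{Q}{X^*}=\pin{\mathcal{A}^\top y^*}{X^*}=\pin{y^*}{\mathcal{A}(X^*)}$. Writing this out, $\pin{Q}{X^*}=y^*_0\pin{A_0}{X^*}+\sum_{ij}y^*_{ij}\pin{A_{ij}}{X^*}$. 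Using the feasibility already shown on $\{0\}\cup\mathcal{A}(y^*)$ and $y^*_{ij}=0$ off $\mathcal{A}(y^*)$, this equals $y^*_0+\sum_{ij\in\mathcal{A}(y^*)}y^*_{ij}\beta_{ij}=\pin{b}{y^*}$, the dual objective value. So $X^*$ is a matrix with $X^*\succeq 0$, satisfying all equality constraints, with $\pin{Q}{X^*}$ equal to the optimal dual value. If, in addition, $X^*$ satisfied all the inequality constraints it would be primal feasible with objective value equal to the dual optimum, hence primal optimal, and we would be done. Thus the theorem is really asserting that $X^*\succeq 0$ already forces the inequalities; I expect the intended argument is that the full KKT system~\eqref{eq:complementaryslakness}--\eqref{eq:activeconstraints} together with $X^*\succeq 0$ \emph{is} the optimality system, i.e. the statement should be read as: $X^*$ constructed this way solves~\eqref{eq:complementaryslakness}--\eqref{eq:activeconstraints}, and any such $X^*$ that is moreover PSD is, by the standard SDP complementary-slackness characterization of optimality, primal-dual optimal together with $y^*$ (and in particular $y^*$ is then itself dual optimal, so $\mathcal{A}(y^*)$ captures exactly the tight constraints). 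I would therefore phrase the final step as: $(X^*,y^*)$ satisfy primal feasibility of the equality constraints, dual feasibility, and complementarity; invoking the corollary on strong duality and the standard fact that these conditions are sufficient for optimality in a strictly feasible primal-dual SDP pair, we conclude that $(X^*,y^*)$ are optimal for~\eqref{P:SDP-matrix} and~\eqref{P:Dsdp} respectively, which completes the proof.
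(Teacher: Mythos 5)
Your argument follows the same route as the paper's proof: complementarity $(Q-\mathcal{A}^\top y^*)X^*=0$ from the block structure of $Z$ and the spectral decomposition, the equalities $\pin{A_0}{X^*}=1$ and $\pin{A_{ij}}{X^*}=\beta_{ij}$ for $ij\in\mathcal{A}(y^*)$ from the reduced system~\eqref{eq:reducedsystem}, and optimality via duality; your extra computation showing $\pin{Q}{X^*}=\pin{b}{y^*}$ makes the sufficiency of these conditions explicit. The gap you flag --- that nothing verifies $\pin{A_{ij}}{X^*}\le\beta_{ij}$ for coordinates with $y^*_{ij}=0$ --- is genuine, but it is present in the paper's proof as well, which simply declares $X^*$ primal feasible ``since it satisfies the set of active constraints.'' For what it is worth, the gap is partly closable: if variable $i$ has at least one active constraint, then $x^*_{00}=1$, $X^*\succeq 0$ (hence $x^*_{ii}\ge (x^*_{0i})^2$) and the active equality together force $(x^*_{0i},x^*_{ii})\in P(D_i)$; for instance an active upper facet gives $(x^*_{0i})^2-(l_i+u_i)x^*_{0i}+l_iu_i\le 0$, so $x^*_{0i}\in[l_i,u_i]$, and an active lower facet $j$ similarly gives $x^*_{0i}\in[j,j+1]$. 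The case that remains open --- and that neither you nor the paper resolves --- is a variable $i$ with $y^*_{ij}=0$ for all $j$, where complementarity and positive semidefiniteness alone do not confine $(x^*_{0i},x^*_{ii})$ to $P(D_i)$. So your proposal is as complete as the published proof and more candid about where the argument is thin; no further criticism is warranted beyond noting that a fully rigorous statement would either assume primal feasibility of $X^*$ outright or treat the no-active-constraint case separately.
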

\begin{proof}
Let~$X^*$ be produced by Algorithm~\ref{algo:primalsol} such that
it is positive semidefinite.
We have that~$X^*$ is a feasible solution of Problem~\eqref{P:SDP-matrix},
since it satisfies the set of active constraints for the optimal dual
solution~$y^*$:
\begin{eqnarray*}
\pin{A_{0}}{X} & = & \pin{A_{0}}{PZP^\top}
=  \pin{P^\top A_{0}P}{Z}
= \pin{\overline{P^\top A_{0}P}}{Y}
= 1\\
\pin{A_{ij}}{X}
& = & \pin{A_{ij}}{PZP^\top}
= \pin{P^\top A_{ij}P}{Z}
=\pin{\overline{P^\top A_{ij}P}}{Y}
= \beta_{ij}
\end{eqnarray*}
for all $ij\in\mathcal{A}(y^*)$, this holds since $Y\in S_{n+1-r}$
is the solution of the system of equations~\eqref{eq:reducedsystem}.
It also satisfies complementarity slackness:
\[
(Q-\mathcal{A}^\top y^*)X^*= P\Dg{\lambda}P^\top PZP^\top=P\Dg{\lambda}ZP^\top=0,
\]
where the last equation holds since $Z$ is computed as in Step~3
of Algorithm~\ref{algo:primalsol}.
Namely, if~$\lambda_l =0$,  then the corresponding row~$l$ of~$\Dg{\lambda}Z$
is equal to zero. The other rows of~$\Dg{\lambda}Z$ are equal to zero from
the definition of~$Z$.
\qed\end{proof}

\begin{coro}
Let~$y^*$ be a feasible solution of the dual problem~\eqref{P:Dsdp}.
If the system
\begin{align*}
(Q-\mathcal{A}^\top y^*)X & =0 \\
\pin{A_0}{X} &= 1, \\
\pin{A_{ij}}{X} &= \beta_{ij} \quad \forall i,j\in \mathcal{A}(y^*)
\end{align*}
has a unique solution, then Algorithm~\ref{algo:primalsol} produces that solution.
\end{coro}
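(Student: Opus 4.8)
The plan is to reduce the claim to the equivalence, already worked out in the paragraphs preceding Algorithm~\ref{algo:primalsol}, between solutions of the linear system in the statement and solutions of the reduced system~\eqref{eq:reducedsystem}. Fix the spectral decomposition $Q-\mathcal{A}^\top y^* = P\Dg{\lambda}P^\top$ with $P$ orthogonal and $\lambda\ge 0$ (the latter because $y^*$ is dual feasible), and order the columns of $P$ so that the first $n+1-r$ eigenvalues are the zero ones, as in the algorithm. For any symmetric $X$ put $Z:=P^\top X P$, so $X=PZP^\top$. Since $P$ is invertible, $(Q-\mathcal{A}^\top y^*)X=0$ is equivalent to $\Dg{\lambda}Z=0$; using that $Z$ is symmetric, this is in turn equivalent to $z_{ij}=0$ whenever $\lambda_i>0$ or $\lambda_j>0$, i.e.\ to $Z$ being the zero-extension of its top-left block $Y\in S_{n+1-r}$. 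Substituting $X=PZP^\top$ into the affine constraints $\pin{A_0}{X}=1$ and $\pin{A_{ij}}{X}=\beta_{ij}$ for $ij\in\mathcal{A}(y^*)$, and discarding the rows and columns of $Z$ forced to vanish, turns these constraints into exactly the reduced system~\eqref{eq:reducedsystem} for $Y$.

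This establishes a bijection $X\leftrightarrow Y$ between the solution set of the system in the statement and the solution set of~\eqref{eq:reducedsystem}, given by $X=PZP^\top$ with $Z$ the zero-extension of $Y$. Now assume the system in the statement has a unique solution $X^*$. Then~\eqref{eq:reducedsystem} has a unique solution as well, namely the top-left block $Y^*$ of $P^\top X^* P$; in particular~\eqref{eq:reducedsystem} is solvable, so Step~2 of Algorithm~\ref{algo:primalsol} succeeds, and by uniqueness it necessarily returns $Y^*$. Steps~3 and~4 then form the zero-extension $Z^*$ of $Y^*$ and output $X=PZ^*P^\top$. By the backward direction of the equivalence above, this $X$ solves the system in the statement, hence $X=X^*$ by uniqueness, which is the claim.

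The only point that needs genuine care is the direction ``complementarity $\iff$ zero pattern of $Z$'', where one must pass from ``row $l$ of $\Dg{\lambda}Z$ vanishes for every $l$ with $\lambda_l>0$'' to ``both row and column $l$ of $Z$ vanish'' via symmetry of $Z$; everything else is bookkeeping, so I do not expect a real obstacle here. I would also remark in passing that the argument is purely linear-algebraic and never uses optimality of $y^*$, only that $Q-\mathcal{A}^\top y^*\succeq 0$ so the eigenvalue partition into zero and positive parts makes sense, and that the indexing convention of Algorithm~\ref{algo:primalsol} (vanishing block $=\{1,\dots,n+1-r\}$) is without loss of generality, being enforced by a permutation of the columns of $P$.
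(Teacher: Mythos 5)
Your argument is correct and follows exactly the route the paper intends: the corollary is stated without a separate proof because it rests on the equivalence, derived in the text preceding Algorithm~\ref{algo:primalsol}, between the system in the statement and the reduced system~\eqref{eq:reducedsystem} via $X=PZP^\top$, and you have simply made that bijection and the resulting uniqueness transfer explicit (including the correct observation that symmetry of $Z$ is needed to pass from $\Dg{\lambda}Z=0$ to the vanishing of both the rows and the columns indexed by positive eigenvalues). Nothing is missing.
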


In summary, we have proposed a faster approach than solving a semidefinite
\mred{program}, but without any guarantee that the solution obtained will
satisfy the positive semidefiniteness constraint.
However there are theoretical reasons to argue that this approach will work in
practice.
In~\cite{Alizadeh1997}, it was proved  that dual non-degeneracy in semidefinite
programming implies the existence of a unique optimal primal solution; see~\cite{Alizadeh1997} for the definition of non-degeneracy.
Additionally, it was proved that dual non-degeneracy is a generic property.
Putting these two facts together, it  means that for randomly generated instances
the probability of obtaining a unique optimal primal solution is one.
From the practical point of view, we have implemented
Algorithm~\ref{algo:primalsol} and run experiments to check the positive
semidefiniteness of the computed matrix~$X$. We will see that for the random
instances considered in Section~\ref{Sec:experiments} this approach
works very well in practice.

\section{Adding linear constraints}
\label{Sec:lc}
Many optimization problems, such as the quadratic knapsack
problem~\cite{Pisinger,HelmbergRendlWeismantel},
can be modeled as a quadratic problem with linear constraints.
Linear constraints can be easily included into the current setting of our
problem.
Consider the following extension of Problem~\eqref{P:generalqp},
\begin{align}
\label{P:QuadraticLinearConst}
\min \ &\quad x^\top \hat Q x +\hat l^\top x+ \hat c\nonumber\\
\st &\quad a_j^\top x\leq b_j \quad \forall j=1,\dots,p \\
  & \quad \quad x\in D_1\times\dots\times D_n\;.\nonumber
\end{align}
Notice that the linear constraint~$a_j^\top x\leq b_j$ can be
equivalently written as
\[
\pin{A_j}{
\begin{pmatrix}
1\\
x
\end{pmatrix}
\begin{pmatrix}
1\\
x
\end{pmatrix}^\top
}
\leq
\beta_j,
\]
where
\[
A_j=
\begin{pmatrix}
\beta_j-b_j  & \frac{{a_j}_0}{2} & \dots & \frac{{a_j}_{n-1}}{2} \\
\frac{{a_j}_0}{2} & 0 & \dots & 0 \\
\vdots &  &  \ddots & \\
\frac{{a_j}_{n-1}}{2} & 0 & \dots & 0
\end{pmatrix}.
\]
Following a similar procedure as the one described in
Section~\ref{Sec:sdprelaxation}, we can formulate a semidefinite relaxation of
Problem~\eqref{P:QuadraticLinearConst} as follows
\begin{align}
\label{P:SDPLinearConst-matrix}
\min~~~\pin{Q}{X} & \nonumber \\
\st~~\pin{A_{0}}{X}&=1 \nonumber\\
\pin{A_{ij}}{X}&\leq \beta_{ij} \quad\forall j=l_i,\dots,u_i\quad\forall i=1,\dots,n  \\
\pin{A_{j}}{X}&\leq \beta_{j} \quad\forall j=1,\dots,p \nonumber \\
X&\succeq0. \nonumber
\end{align}
The matrices~$Q$,~$A_0$ and~$A_{ij}$ are defined as in
Section~\ref{Sec:matrixform}.
Observe that the new constraint matrices~$A_j$ have rank two.
The dual of Problem~\eqref{P:SDPLinearConst-matrix} can be calculated as
\begin{align}
\label{P:DsdpLinearConst}
\max~\pin{b}{y}~~~~\quad & \nonumber \\
\st~ \quad  Q - \mathcal{A}^\top y &\succeq 0 \\
y_{0} &\in \Rbb \nonumber\\
y_{ij}&\leq0 \quad \forall j=l_{i},\dots,u_i \quad\forall i=1,\dots, n\nonumber\\
y_{j}&\leq0 \quad \forall j=1,\dots,p, \nonumber
\end{align}
where $\mathcal{A}$ and $b$ are extended in the obvious way.
Again, we want to solve the log-det form of Problem~\eqref{P:DsdpLinearConst}
\begin{align}
\label{P:Dsdplc-barrier}
\max~~f(y;\sigma):=&\pin{b}{y}+\sigma \log\det(Q-\mathcal{A}^\top y) \nonumber\\
\st~~~~Q-\mathcal{A}^\top y &\succ 0 \\
 y_{0} &\in \Rbb  \nonumber \\
y_{ij}&\leq0 \quad \forall j=l_{i},\dots,u_i \quad\forall i=1,\dots, n\nonumber\\
y_{j}&\leq0 \quad \forall j=1,\dots, p.\nonumber
\end{align}
Notice that the overall form of the dual problem to be solved has not changed.
The new dual variables~$y_j$ corresponding to the additional linear constraints
play a similar role as the dual variables~$y_{ij}$, both must satisfy
the non-positivity constraint. Even more, the dual
problem~\eqref{P:DsdpLinearConst} remains strictly feasible,
this fact can be easily derived from Theorem~\ref{lem:initial}.

\begin{coro} \label{Cor:strictlyfeas}
  Problem~\eqref{P:DsdpLinearConst} is strictly feasible.
\end{coro}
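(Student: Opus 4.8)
The plan is to reduce Corollary~\ref{Cor:strictlyfeas} directly to Theorem~\ref{lem:initial}, observing that the only structural change from Problem~\eqref{P:Dsdp} to Problem~\eqref{P:DsdpLinearConst} is the addition of the dual variables~$y_j$, $j=1,\dots,p$, corresponding to the extra linear constraints, together with the requirement~$y_j\le 0$. The semidefinite constraint~$Q-\mathcal{A}^\top y\succeq 0$ now involves the enlarged operator~$\mathcal{A}$, but the additional terms it contributes are exactly $-\sum_{j=1}^p y_j A_j$.

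First I would take the strictly feasible point~$y^0$ produced in the proof of Theorem~\ref{lem:initial}, which satisfies $y^0_{ij}\le 0$ for all~$i,j$ and $Q-\mathcal{A}^\top y^0\succ 0$ in the original (box-only) setting. Extend it to a candidate point~$\bar y$ for Problem~\eqref{P:DsdpLinearConst} by setting~$\bar y_j:=0$ for all~$j=1,\dots,p$ and keeping all other entries equal to those of~$y^0$. Then the enlarged operator evaluated at~$\bar y$ gives $Q-\mathcal{A}^\top\bar y = Q-\mathcal{A}^\top y^0 - \sum_{j=1}^p \bar y_j A_j = Q-\mathcal{A}^\top y^0\succ 0$, since each~$\bar y_j=0$. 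Thus~$\bar y$ is feasible for Problem~\eqref{P:DsdpLinearConst} and satisfies the semidefinite constraint strictly, but it lies on the boundary of the orthant~$\{y_j\le 0\}$ because~$\bar y_j=0$.

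To obtain a strictly feasible point I would perturb the new coordinates. By continuity of the smallest eigenvalue, there exists~$\epsilon>0$ small enough so that decreasing each~$\bar y_j$ by~$\epsilon$ keeps the matrix positive definite: defining~$y^*$ by $y^*_j:=-\epsilon$ for $j=1,\dots,p$ and $y^*$ equal to~$\bar y$ otherwise, we get $Q-\mathcal{A}^\top y^* = (Q-\mathcal{A}^\top\bar y) + \epsilon\sum_{j=1}^p A_j\succ 0$ for~$\epsilon$ sufficiently small, while now~$y^*_j=-\epsilon<0$. If one additionally wants strict feasibility in the~$y_{ij}$ coordinates as well, the same argument from the end of the proof of Theorem~\ref{lem:initial} applies: replace~$y^*$ by~$y^*-\epsilon'\1$ for small enough~$\epsilon'>0$, which simultaneously makes every sign-constrained coordinate strictly negative and preserves positive definiteness.

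There is essentially no hard part here — the corollary is a routine extension, and that is exactly why the paper states it as a corollary rather than a theorem. The only point requiring a word of care is the eigenvalue-continuity step used to choose~$\epsilon$; but this is already invoked in the proof of Theorem~\ref{lem:initial} and carries over verbatim, since the perturbation $\epsilon\sum_{j=1}^p A_j$ is a fixed symmetric matrix and the map $\epsilon\mapsto\lambda_{\min}\big((Q-\mathcal{A}^\top\bar y)+\epsilon\sum_j A_j\big)$ is continuous with value strictly positive at~$\epsilon=0$.
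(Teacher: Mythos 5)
Your proof is correct and is exactly the argument the paper has in mind: the paper gives no written proof, merely remarking that the corollary "can be easily derived from Theorem~\ref{lem:initial}," and your extension of $y^0$ by zeros on the new coordinates followed by an $\epsilon$-perturbation (justified by continuity of $\lambda_{\min}$) is the intended filling-in of that remark.
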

If also the primal problem~\eqref{P:SDPLinearConst-matrix} is strictly
feasible, we can show as before that the level sets in our coordinate ascent
method are bounded and that we can always find a feasible step length.
However,
due to the addition of linear constraints, primal strict feasibility might
no longer be satisfied. However, by Corollary~\ref{Cor:strictlyfeas}
strong duality holds. In particular, we obtain
\begin{coro}\label{Cor:priminf}
If the primal problem~\eqref{P:SDPLinearConst-matrix} is infeasible,
then Problem~\eqref{P:Dsdplc-barrier} is unbounded.
\end{coro}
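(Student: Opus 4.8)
The plan is to obtain the unboundedness of the barrier problem \eqref{P:Dsdplc-barrier} from that of the plain dual SDP \eqref{P:DsdpLinearConst}, which in turn is a consequence of strong duality together with the assumed primal infeasibility; the only nontrivial part is the passage from \eqref{P:DsdpLinearConst} to \eqref{P:Dsdplc-barrier}, since the two problems have different objective functions.

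I would start from Corollary~\ref{Cor:strictlyfeas}, which provides a strictly feasible point $\bar y$ of \eqref{P:DsdpLinearConst}; in particular $M_0:=Q-\mathcal{A}^\top\bar y\succ0$ and $\bar y$ obeys the non-positivity constraints $\bar y_{ij}\le 0$, $\bar y_j\le 0$. By the conic duality theorem, strict feasibility of the dual \eqref{P:DsdpLinearConst} \emph{together with} a finite optimal value would force the primal \eqref{P:SDPLinearConst-matrix} to be solvable, hence feasible. Since \eqref{P:SDPLinearConst-matrix} is assumed infeasible, \eqref{P:DsdpLinearConst} cannot have a finite optimal value; being feasible, it is therefore unbounded, so there is a sequence $(y^{(k)})_k$ feasible for \eqref{P:DsdpLinearConst} with $\langle b,y^{(k)}\rangle\to+\infty$. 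For every $k$ we then have $M_k:=Q-\mathcal{A}^\top y^{(k)}\succeq0$, together with $y^{(k)}_{ij}\le0$ and $y^{(k)}_j\le0$.

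The remaining step transfers this sequence into the strictly feasible region of \eqref{P:Dsdplc-barrier} by averaging with $\bar y$: set $z^{(k)}:=\tfrac12\bar y+\tfrac12 y^{(k)}$. Then $Q-\mathcal{A}^\top z^{(k)}=\tfrac12 M_0+\tfrac12 M_k\succ0$, being the sum of the positive definite matrix $\tfrac12 M_0$ and the positive semidefinite matrix $\tfrac12 M_k$, and the non-positivity constraints hold by convexity, so $z^{(k)}$ is feasible for \eqref{P:Dsdplc-barrier}. Using the elementary fact that $\det(A+B)\ge\det(A)$ whenever $A\succ0$ and $B\succeq0$, one obtains the uniform lower bound
\[
\log\det\bigl(Q-\mathcal{A}^\top z^{(k)}\bigr)\ \ge\ \log\det\bigl(\tfrac12 M_0\bigr)\ =\ (n+1)\log\tfrac12+\log\det M_0,
\]
and consequently
\[
f(z^{(k)};\sigma)\ \ge\ \tfrac12\langle b,\bar y\rangle+\tfrac12\langle b,y^{(k)}\rangle+\sigma\bigl((n+1)\log\tfrac12+\log\det M_0\bigr),
\]
whose right-hand side tends to $+\infty$ as $k\to\infty$, since every term other than $\tfrac12\langle b,y^{(k)}\rangle$ is independent of $k$. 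Hence \eqref{P:Dsdplc-barrier} is unbounded.

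The one genuinely delicate point is this last transfer: an ascent sequence of the plain SDP may run to the boundary of the semidefinite cone, where the barrier term $\sigma\log\det(Q-\mathcal{A}^\top y)$ tends to $-\infty$ and could, a priori, swallow the growth of $\langle b,\cdot\rangle$. Averaging with a fixed strictly feasible point and invoking the monotonicity $\det(A+B)\ge\det(A)$ is exactly what keeps the barrier term bounded below along the whole sequence $(z^{(k)})_k$; the conic duality input and the determinant inequality themselves are standard.
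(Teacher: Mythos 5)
Your proof is correct, and it takes a genuinely different route from the paper's in the one step that matters. Both arguments begin identically: dual strict feasibility (Corollary~\ref{Cor:strictlyfeas}) plus primal infeasibility forces the plain dual~\eqref{P:DsdpLinearConst} to be unbounded. The divergence is in how the unboundedness is transferred to the barrier problem~\eqref{P:Dsdplc-barrier}. The paper extracts an unbounded \emph{ray} $y^{0}+sy$, $s\ge 0$, emanating from a strictly feasible point $y^0$, and then exploits concavity of $s\mapsto\lambda_{\min}\bigl(Q-\mathcal{A}^{\top}(y^{0}+sy)\bigr)$: if this function ever dropped below its value at $s=0$ it would tend to $-\infty$, contradicting feasibility of the ray, so $\lambda_{\min}$ stays bounded away from zero along the ray and the barrier term is bounded below. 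You instead take an arbitrary maximizing \emph{sequence} $y^{(k)}$, average it with the strictly feasible point, and control the barrier via $\det(A+B)\ge\det(A)$ for $A\succ0$, $B\succeq0$. Your version buys two things: it does not require producing a recession direction (the paper's ``by convexity, we can find an unbounded ray'' is true but is an extra structural step), and the determinant monotonicity is arguably more elementary than the concavity-of-$\lambda_{\min}$ argument. The paper's version buys a slightly cleaner picture (a single ray along which $f\to+\infty$) and reuses the eigenvalue machinery already present in Theorem~\ref{Th:LevelSets}. The cost of your averaging is only the harmless factor $\tfrac12$ in front of $\langle b,y^{(k)}\rangle$, which does not affect divergence, so the conclusion stands either way.
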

\begin{proof}
From Corollary~\ref{Cor:strictlyfeas}, it follows that both problems~\eqref{P:SDPLinearConst-matrix}
and~\eqref{P:DsdpLinearConst} have the same optimal
value; see e.g.~Theorem~2.2.5 in~\cite{Helmberg(2000)}. If~\eqref{P:SDPLinearConst-matrix} is infeasible, this value
is~$+\infty$, so that~\eqref{P:DsdpLinearConst} is unbounded.
Thus, by convexity, we can find an unbounded ray $y^{0}+sy$, $s\ge 0$,
for~\eqref{P:DsdpLinearConst}, starting at a
strictly feasible solution~$y^{0}$.
Now consider the concave function
$h(s)=\lambda_{\min}(Q-\mathcal{A}^{\top}(y^{0}+sy))$.
If there exists $s'>0$ such that $h(s')<h(0)$, then by concavity
$h(s)\rightarrow -\infty$ for $s\rightarrow \infty$ which is a contradiction
to the feasibility of the ray.
Thus $h(s)\ge h(0) = \lambda_{\min}(Q-\mathcal{A}^{\top}y^{0}) > 0$ for all
$s\ge 0$.
Hence, $\log\det(Q-\mathcal{A}^{\top}(y^{0}+sy))$ is bounded from below so
that the objective function of~\eqref{P:Dsdplc-barrier} goes to infinity.
\qed\end{proof}
The proof of Corollary~\ref{Cor:priminf} shows how to adapt the coordinate
search in this case:
either an appropriate root such as in Theorem~\ref{lem:onedimstep} or
Theorem~\ref{lem:steptwodim} exists, which can be used to determine the step length, or we have proven primal infeasibility.
\ifappendix
The details of the adapted algorithms are given in Appendices~\ref{sec:lincd}
and~\ref{sec:lincd2d} for the one- and two-dimensional approach, respectively.
\else
\mred{For the details of the adapted algorithms we refer to Chapter~6 of~\cite{Montenegro(2017)}.}
\fi

In case Problem~\eqref{P:SDPLinearConst-matrix} is feasible but not strictly
feasible, the barrier approach fails. In this case,
Problem~\eqref{P:Dsdplc-barrier} may be unbounded and hence the algorithm
wrongly concludes primal infeasibility.

\section{Experiments}
\label{Sec:experiments}



We now present the results of an experimental evaluation of our approach. Our experiments were carried out on Intel Xeon processors
running at 2.60~GHz.
For all the algorithms, the optimality tolerance OPTEPS was set to $10^{-6}$.
We have used as a base the code that already exists for Q-MIST.
Algorithms~\ref{algo:cd} and~CD2D were implemented in C++, using
routines from the LAPACK package~\cite{lapack} only in the initial
phase for computing a starting point, namely, to compute the smallest
eigenvalue of $\hat Q$ needed to determine $y^{(0)}$, and the inverse matrix
$W^{(0)}=(Q-\aop^\top y^{(0)})^{-1}$. The updates in each iteration can be
realized by elementary calculations, as explained in
Section~\ref{Sec:generalalgo}.

For our experiments, we have generated random instances
in the same way as proposed in~\cite{BuchheimWiegele(2013)}. We can
  control the percentage of negative eigenvalues in the objective matrix $\hat
  Q$, represented by the parameter $p$, so that $\hat Q$ is positive semidefinite for $p=0$, negative semidefinite for $p=100$ and indefinite for any other value~$p\in(0,100)$.

We will consider two types of variable domains: for \emph{ternary} instances, we have~$D_i=\{-1,0,1\}$, while for \emph{integer} instances we set~$D_i=\{-10,\dots,10\}$, for all~$i$.

In our implementation, we use the following rule to update the barrier
parameter:
whenever the entry of the gradient corresponding to the chosen coordinate has
an absolute value below~$0.1$ in the case of ternary instances
or below~$0.001$ for integer instances, we multiply~$\sigma$ by~$0.25$.
As soon as $\sigma$ falls below~$10^{-8}$, we fix it to this value.
The initial $\sigma$ is set to 1.

Recall that in Section~\ref{Sec:matrixform}, the parameter $\beta_{ij}$ can
be chosen arbitrarily. As it was pointed out, this parameter does not change the
feasible region of the primal problem~\eqref{P:SDP-matrix}, however it does
have an influence on its dual problem.
We have tested several choices of $\beta_{ij}$, such as setting it to zero for
all the constraints, or, according to~Lemma~\ref{lemma:rank}, so that all
constraint matrices have rank one.
We have found out experimentally that when choosing  the value of the parameter
$\beta_{ij}$ in such way that the constraint matrices $A_{ij}$ have
their first entry equal to zero, our approach has faster convergence.
Hence, we set  $\beta_{iu_i}= -l_iu_i$  for the upper bounding facets and
$\beta_{ij}=j(j+1)$ for lower bounding facets,
see Section~\ref{Sec:matrixform}.

\subsection{Stopping criterion}

It is important to find a good stopping criterion that either may
allow an early pruning of the nodes as soon as the current upper bound
is reached, or stops the algorithm when it cannot be expected any more
to reach this bound.  Our approach has the advantage of
producing feasible solutions of Problem~\eqref{P:Dsdp} and thus a
valid lower bound for Problem~\eqref{P:SDP-matrix} at every
iteration. This means that we can stop the iteration process and prune
the node as soon as the current lower bound exceeds a known upper
bound for Problem~\eqref{P:SDP-matrix}.

We propose the following stopping criterion. Every $n$ iterations,
we compare the gap at the current point (\emph{new-gap}) with
the previous one  $n$ iterations before (\emph{old-gap}).
If
$(1-\text{GAP})\text{\emph{old-gap}}<\text{\emph{new-gap}}$
and the number of iterations is at least $|D_i|\cdot n$, or
$
\text{\emph{new-gap}} < \text{OPTEPS}
$,
we stop the algorithm.
The gap is defined as the difference of the best upper bound known so far and
the current lower bound.
The value of GAP has to be taken in $[0,1]$.

In Figure~\ref{Fig:Ternaryn50allp} we illustrate the influence of
the parameter GAP on the running time and number of nodes needed in the entire
branch-and-bound tree, for both Algorithm~\ref{algo:cd}
and~CD2D.
We have chosen 110 random ternary instances of size 50, 10 instances for each
$p\in\{0,10,\dots,100\}$.
The horizontal axis corresponds to different values of GAP,
while the vertical axis corresponds to the average running time
(Figure~\ref{Fig:Ternaryn50allp}~(a)) and the average number of nodes
(Figure~\ref{Fig:Ternaryn50allp}~(b)), taken over the 110 instances.
If GAP$=$0, then the algorithm will stop only
when the new-gap reaches the absolute optimality tolerance.
As expected, strong bounds are obtained, and thus
the number of nodes is reduced and the time per node increases.
When GAP$=$1, the algorithm will stop immediately
after~$|D_i|n$ iterations, the lower bound produced
may be too weak and therefore the number of nodes is large.
A similar behavior of GAP is repeated for integer instances.
We conclude that choosing GAP=0.1 produces a good
balance between the quality of the lower bounds and the number of nodes.
We use the same stopping rules for both Algorithm~\ref{algo:cd}
and~CD2D.


\begin{figure}
\begin{center}
\subfigure[ ][Running time]{%
\includegraphics[scale=0.45]{./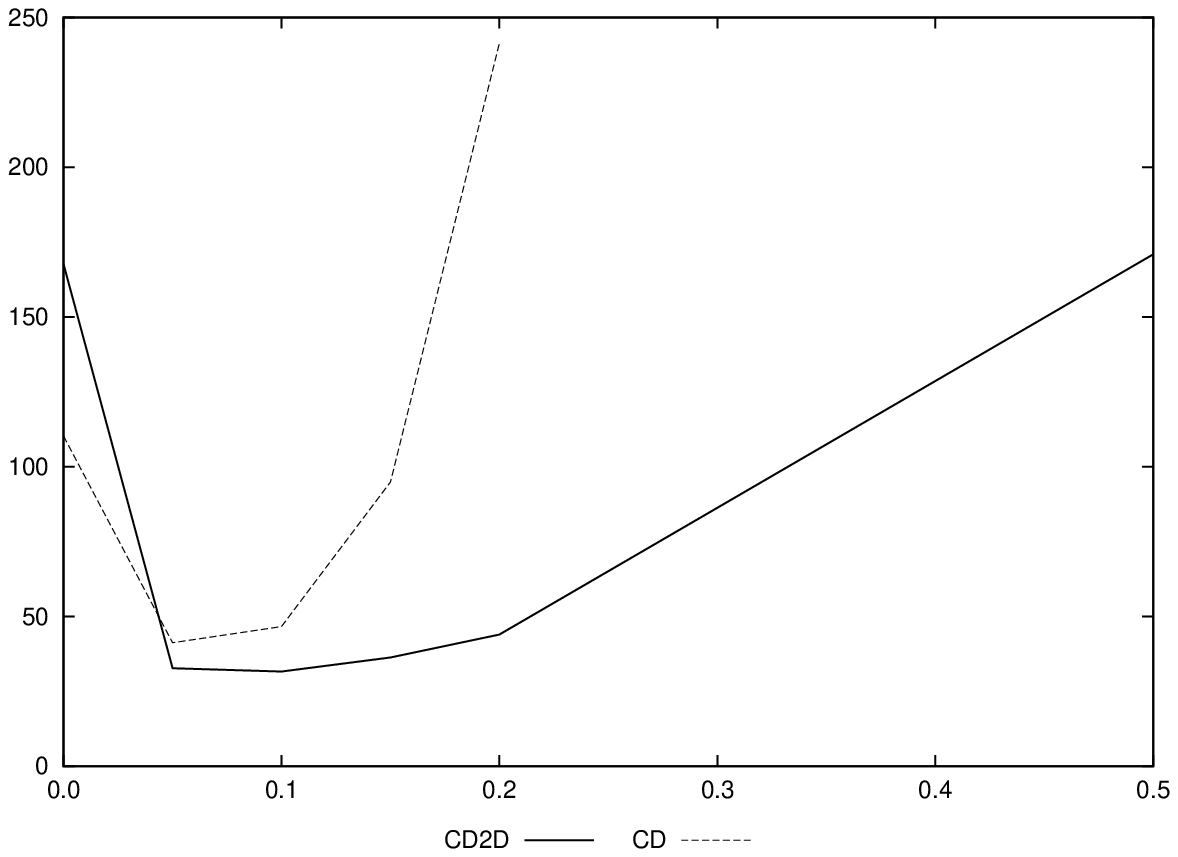}
}
\subfigure[ ][Number of nodes]{%
\includegraphics[scale=0.45]{./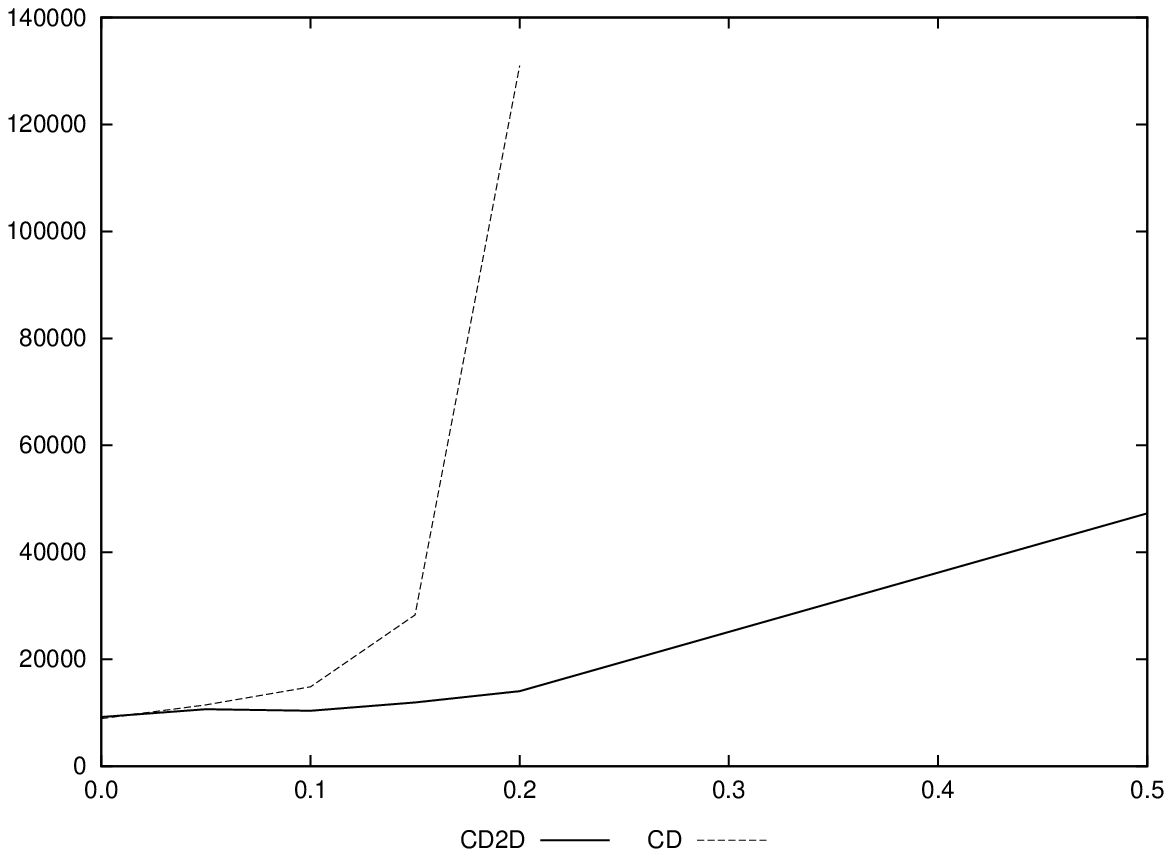}%
}
\end{center}
\vspace{-0.5cm}
\caption{\label{Fig:Ternaryn50allp} Influence of the gap criterion on the running time and the number of nodes for ternary instances, the behavior for integer instances is similar.}
\end{figure}

\subsection{Total running time}
Next, we are interested in evaluating the performance of the
branch-and-bound framework Q-MIST using the new Algorithms~\ref{algo:cd}
and~CD2D, and compare them to CSDP\mred{~\cite{csdp}, an implementation of an interior point method}.
Furthermore, we compare to other non-convex integer
programming software: COUENNE~\cite{couenne} and BARON~\mred{\cite{baron,baron:14.3.1}}.

In the following tables,  $n$  in the first column represents
the number of variables.
For each approach, we report the number of solved instances (\emph{\#}),
the average number of nodes explored in the branch-and-bound scheme
(\emph{nodes}) and the average running time in seconds (\emph{time}).
All lines report average results over 110 random instances.
We have set a time limit of one hour, and compute the averages considering
only the instances solved to proven optimality within this period of time.

In Table~\ref{tab:ternary} we present the results for ternary
instances.  As it can be observed, Q-MIST manages to solve all~110
instances for $n\leq 50$ with all three approaches. Both
Algorithms~\ref{algo:cd} and~CD2D require less time than CSDP even if
the number of nodes enumerated is much larger. For $n>50$, Q-MIST with
the new approach solves much more instances than with CSDP. Note that BARON
and COUENNE solved all 110 instances only for $n\leq20$ and $n\leq30$,
respectively.

Table~\ref{tab:integer} reports the results for integer instances,
the results show that Algorithm~CD2D outperforms all the other
approaches.  In this case, the lower bounds of Algorithm~\ref{algo:cd}
are too weak, leading to an excessive number of nodes, and it is not able
to solve all instances even of size 10 within the time limit.
On the contrary, Algorithm~CD2D manages to solve much more instances
than its competitors, also in the case of integer instances.

From the experiments reported in~\cite{BuchheimWiegele(2013)}, it was already
known that CSDP outperforms a previous version of COUENNE. The comparison of
Q-MIST with BARON is new. We have used also ANTIGONE~\cite{antigone}
for the comparison, but we do not report the results observed
since they are not better than those obtained with COUENNE.

\begin{sidewaystable}

  \centering

  \vspace{10cm}
  \caption{Results for ternary instances, $D_i=\{-1,0,1\}$}
  \label{tab:ternary}
\resizebox{\textwidth}{!}{%
  \begin{tabular}{ | c |r r r | r r r |r r r || r r r | r r r|}
    \hline
 &   \multicolumn{9}{c||}{Q-MIST}  &  \multicolumn{3}{c|}{COUENNE}  & \multicolumn{3}{c|}{BARON}   \\
$n$ & \multicolumn{3}{c}{CD} &  \multicolumn{3}{c}{CD2D}  & \multicolumn{3}{c||}{CSDP} &  \multicolumn{3}{c|}{} &  \multicolumn{3}{c|}{} \\
\cline{2-16}
 & \#  & nodes & time & \#  & nodes & time & \#  & nodes & time  & \#  & nodes & time  & \#  & nodes & time \\
\hline
\hline
10  & 110 &  49.31 & 0.03   & 110 &  28.05 & 0.02   & 110 &  10.11 & 0.07   & 110 &  11.91 & 0.10   & 110 &  1.42 & 0.07   \\
20  & 110 &  250.31 & 0.16   & 110 &  174.24 & 0.06   & 110 &  67.95 & 0.32   & 110 &  2522.35 & 10.40   & 110 &  8.87 & 0.80   \\
30  & 110 &  1531.29 & 1.25   & 110 &  668.47 & 0.65   & 110 &  247.24 & 2.17   & 85 &  150894.54 & 1225.72   & 110 &  8.67 & 27.59   \\
40  & 110 &  3024.42 & 4.98   & 110 &  2342.75 & 3.47   & 110 &  1030.25 & 12.20   & 4 &  134864.75 & 2330.83   & 65 &  45.88 & 280.17   \\
50  & 110 &  14847.49 & 46.61   & 110 &  10357.11 & 31.62   & 110 &  7284.09 & 136.81   & 0 &   --  &  --    & 21 &  29.14 & 222.93   \\
60  & 107 &  34353.45 & 197.60   & 110 &  33780.15 & 155.84   & 109 &  17210.14 & 526.96   & 0 &   --  &  --    & 12 &  10.67 & 219.77   \\
70  & 83 &  76774.30 & 515.98   & 98 &  94294.82 & 656.58   & 71 &  17754.41 & 887.17   & 0 &   --  &  --    & 3 &  2.33 & 257.51   \\
80  & 63 &  98962.24 & 1151.22   & 65 &  126549.25 & 1150.02   & 34 &  19553.47 & 1542.38   & 0 &   --  &  --    & 0 &   --  &  --    \\
\hline
\end{tabular}}

\vspace{1cm}
  \centering
  \caption{Results for integer instances, $D_i=\{-10,\dots,10\}$}
   \label{tab:integer}
\resizebox{\textwidth}{!}{%
  \begin{tabular}{ | c |r r r | r r r |r r r || r r r | r r r|}
    \hline
  &   \multicolumn{9}{c||}{Q-MIST}  &  \multicolumn{3}{c|}{COUENNE}  & \multicolumn{3}{c|}{BARON}   \\
$n$ & \multicolumn{3}{c}{CD} &  \multicolumn{3}{c}{CD2D}  & \multicolumn{3}{c||}{CSDP} &  \multicolumn{3}{c|}{} &  \multicolumn{3}{c|}{} \\
\cline{2-16}
 & \#  & nodes & time & \#  & nodes & time & \#  & nodes & time  & \#  & nodes & time  & \#  & nodes & time \\
\hline
\hline
10  & 107 &  1085009.52 & 105.54  & 110 &  70.58 & 0.07   & 109 &  26.29 & 0.16   & 110 &  5817.25 & 7.51   & 110 &  45.43 & 0.49   \\
20  & 10 &  296203.60 & 154.30  & 110 &  969.11 & 0.99   & 110 &  324.71 & 2.85   & 98 &  91473.86 & 489.05    & 109 &  140.43 & 6.44   \\
30  & 4 &  179909.00 & 336.25  & 110 &  5653.71 & 13.89   & 110 &  2196.87 & 34.49   & 0 &   --  &  --    & 104 &  137.47 & 38.20   \\
40  & 0 &   --  &  --    & 110 &  38458.96 & 187.76   & 108 &  13029.41 & 386.68   & 0 &   --  &  --    & 59 &  202.93 & 255.65   \\
50  & 0 &   --  &  --    & 96 &  99205.07 & 944.79   & 67 &  24292.79 & 1247.10   & 0 &   --  &  --    & 15 &  17.87 & 279.82   \\
60  & 0 &   --  &  --    & 53 &  84802.25 & 1329.92   & 26 &  30105.15 & 2088.00   & 0 &   --  &  --   & 8 &  11.25 & 282.82   \\
70  & 0 &   --  &  --    & 2 &  48648.00 & 1218.50   & 1 &  2011.00 & 254.00   & 0 &   --  &  --    & 7 &  12.43 & 457.47   \\
\hline
\end{tabular}}
\end{sidewaystable}

As a summary, we can state that Algorithm~CD2D yields a significant
improvement of the algorithm Q-MIST when compared with CSDP, and it is
even capable to compete with other commercial and free software as
BARON and COUENNE.  However, it is important to point out that the
performance of BARON is almost not changed when considering ternary or
integer variable domains, it solves more or less the same number of
instances in both cases. On the contrary, it is obvious that the change of
the domains affected the performance of our approach significantly,
especially in Algorithm~\ref{algo:cd}.

\mred{
  To conclude the first part of our experiments, we have generated two
  other types of instances using the same generator as before and
  changing only the objective matrix~$Q$. Firstly, we have produced
  random \emph{sparse} matrices as follows: each entry of the
  matrix~$Q$ is zero with probability~$1-\tfrac p{100}$ and the remaining entries are chosen randomly from the interval $[-1,1]$. To obtain symmetric matrices, we set~$Q=\frac{1}{2}(Q+Q^{\top})$. We generated 10 instances for each~$p\in\{25,50,75,100\}$. We report the results of the experiments for sparse ternary instances in Table~\ref{tab:sparseternary} and for sparse integer instances in Table~\ref{tab:sparseinteger}.}

\mred{Additionally, we produced \emph{low rank} matrices~$Q$ by setting 50\% of the eigenvalues to zero, then we chose the remaining eigenvalues to be negative with probability~$\tfrac p{100}$, for~$p\in\{0,10,\dots,100\}$. For each value of~$p$ we have generated 10 instances, thus for each size~$n$ we report average results for 110 instances again.
The results of these experiments are reported in Tables~\ref{tab:lowrankternary}~and~\ref{tab:lowrankinteger}.
}
\mred{
\begin{table}[h!]
  \centering
  \caption{Sparse ternary instances}
  \label{tab:sparseternary}
  \begin{tabular}{ | c | c |r r r | r r r ||r r r |}
    \hline
&  &   \multicolumn{6}{c||}{Q-MIST}  & \multicolumn{3}{c|}{BARON}   \\
$p$ & $n$  &  \multicolumn{3}{c}{CD2D}  &  \multicolumn{3}{c||}{CSDP} &  \multicolumn{3}{c|}{} \\
\cline{3-11}
& & \#  & nodes & time  & \#  & nodes & time & \#  & nodes & time  \\
\hline
\hline
25   & 10  & 10 &  24.40 & 0.00   & 10 &  10.80 & 0.10   & 10 &  1.00 & 0.10   \\
 & 20  & 10 &  163.80 & 0.10   & 10 &  68.00 & 0.40   & 10 &  1.00 & 0.14   \\
 & 30  & 10 &  767.40 & 0.60   & 10 &  505.40 & 3.40   & 10 &  1.40 & 0.44   \\
 & 40  & 10 &  3137.40 & 5.00   & 10 &  1444.20 & 14.30   & 10 &  6.20 & 4.87   \\
 & 50  & 10 &  17734.00 & 55.80   & 10 &  10200.20 & 166.10   & 10 &  16.40 & 30.81   \\
 & 60  & 10 &  88798.40 & 481.30   & 9 &  46796.33 & 1175.44   & 7 &  512.00 & 499.15   \\
 & 70  & 4 &  160533.00 & 1249.75   & 3 &  73215.67 & 2840.67   & 0 &   --  &  --    \\
\hline
50   & 10  & 10 &  32.00 & 0.00   & 10 &  14.00 & 0.00   & 10 &  1.00 & 0.12   \\
 & 20  & 10 &  203.60 & 0.10   & 10 &  98.80 & 0.60   & 10 &  1.00 & 0.20   \\
 & 30  & 10 &  1243.80 & 1.10   & 10 &  461.80 & 2.50   & 10 &  2.20 & 2.99   \\
 & 40  & 10 &  3657.00 & 6.00   & 10 &  2192.80 & 21.20   & 10 &  8.40 & 34.22   \\
 & 50  & 10 &  32299.00 & 103.40   & 10 &  10850.40 & 175.10   & 2 &  7.00 & 96.78   \\
 & 60  & 10 &  107354.60 & 552.80   & 10 &  66379.60 & 1676.20   & 0 &   --  &  --    \\
 & 70  & 4 &  387745.00 & 2988.00   & 0 &   --  &  --    & 0 &   --  &  --    \\
 & 80  & 1 &  212447.00 & 2182.00   & 0 &   --  &  --    & 0 &   --  &  --    \\
\hline
75   & 10  & 10 &  21.40 & 0.00   & 10 &  7.60 & 0.00   & 10 &  1.10 & 0.11   \\
 & 20  & 10 &  308.40 & 0.00   & 10 &  133.60 & 0.90   & 10 &  1.40 & 0.60   \\
 & 30  & 10 &  962.80 & 0.80   & 10 &  439.00 & 2.60   & 10 &  2.40 & 5.46   \\
 & 40  & 10 &  5947.20 & 10.50   & 10 &  2176.80 & 21.30   & 6 &  39.67 & 337.80   \\
 & 50  & 10 &  40459.80 & 128.60   & 10 &  17928.20 & 284.40   & 0 &   --  &  --    \\
 & 60  & 10 &  116544.80 & 581.80   & 10 &  50177.40 & 1265.70   & 0 &   --  &  --    \\
 & 70  & 4 &  260129.50 & 1899.50   & 1 &  79939.00 & 3043.00   & 0 &   --  &  --    \\
 & 80  & 1 &  104621.00 & 1098.00   & 0 &   --  &  --    & 0 &   --  &  --    \\
\hline
100   & 10  & 10 &  36.40 & 0.00   & 10 &  11.60 & 0.00   & 10 &  1.00 & 0.11   \\
 & 20  & 10 &  208.00 & 0.10   & 10 &  106.00 & 0.60   & 10 &  1.20 & 0.53   \\
 & 30  & 10 &  1235.20 & 0.70   & 10 &  495.20 & 2.80   & 10 &  2.20 & 6.94   \\
 & 40  & 10 &  4492.00 & 7.90   & 10 &  1909.20 & 18.70   & 6 &  14.00 & 191.62   \\
 & 50  & 10 &  39410.00 & 118.20   & 10 &  13536.40 & 215.70   & 0 &   --  &  --    \\
 & 60  & 10 &  129061.80 & 619.70   & 10 &  51079.40 & 1303.60   & 0 &   --  &  --    \\
 & 70  & 5 &  268774.20 & 1888.20   & 1 &  89807.00 & 3183.00   & 0 &   --  &  --    \\
\hline
\end{tabular}
\end{table}
}

%

\mred{
\begin{table}[h!]
  \centering
  \caption{Sparse integer instances}
  \label{tab:sparseinteger}
  \begin{tabular}{ | c | c |r r r | r r r ||r r r |}
    \hline
&  &   \multicolumn{6}{c||}{Q-MIST}  & \multicolumn{3}{c|}{BARON}   \\
$p$ & $n$  &  \multicolumn{3}{c}{CD2D}  &  \multicolumn{3}{c||}{CSDP} &  \multicolumn{3}{c|}{} \\
\cline{3-11}
& & \#  & nodes & time  & \#  & nodes & time & \#  & nodes & time  \\
\hline
\hline
25   & 10  & 10 &  60.00 & 0.00   & 10 &  21.60 & 0.00   & 10 &  1.20 & 0.05   \\
 & 20  & 10 &  860.00 & 1.10   & 10 &  312.20 & 1.60   & 10 &  2.40 & 0.12   \\
 & 30  & 10 &  6462.60 & 15.20   & 10 &  1923.00 & 29.70   & 10 &  1.60 & 0.42   \\
 & 40  & 9 &  20913.89 & 109.67   & 10 &  8300.00 & 239.90   & 10 &  63.30 & 14.97   \\
 & 50  & 8 &  122938.75 & 1401.38   & 6 &  30037.33 & 1514.17   & 9 &  221.44 & 122.36   \\
 & 60  & 2 &  85003.00 & 1282.00   & 1 &  31911.00 & 2509.00   & 2 &  10.00 & 28.45   \\
\hline
50   & 10  & 10 &  109.00 & 0.00   & 10 &  27.20 & 0.00   & 10 &  1.20 & 0.06   \\
 & 20  & 10 &  831.00 & 0.90   & 10 &  247.20 & 1.70   & 10 &  1.20 & 0.18   \\
 & 30  & 10 &  5928.20 & 14.30   & 10 &  2252.80 & 36.00   & 10 &  68.30 & 18.70   \\
 & 40  & 10 &  19523.00 & 111.30   & 10 &  11753.60 & 364.20   & 9 &  202.89 & 175.07   \\
 & 50  & 8 &  127993.25 & 1495.25   & 6 &  24956.00 & 1315.50   & 4 &  110.00 & 349.70   \\
\hline
75   & 10  & 10 &  90.00 & 0.20   & 10 &  35.80 & 0.00   & 10 &  1.60 & 0.07   \\
 & 20  & 10 &  1382.00 & 1.40   & 10 &  371.80 & 2.30   & 10 &  1.20 & 0.26   \\
 & 30  & 10 &  6679.00 & 16.50   & 10 &  1828.20 & 29.00   & 10 &  57.70 & 32.40   \\
 & 40  & 10 &  38621.60 & 227.50   & 10 &  13384.40 & 403.30   & 5 &  31.00 & 127.03   \\
 & 50  & 6 &  88795.67 & 1165.00   & 5 &  38027.80 & 2018.40   & 0 &   --  &  --    \\
\hline
100   & 10  & 10 &  89.40 & 0.00   & 10 &  28.40 & 0.00   & 10 &  1.00 & 0.05   \\
 & 20  & 10 &  1547.00 & 1.70   & 10 &  361.40 & 2.20   & 10 &  1.20 & 0.31   \\
 & 30  & 10 &  6418.20 & 14.90   & 10 &  2842.60 & 47.30   & 10 &  3.40 & 10.93   \\
 & 40  & 10 &  23796.20 & 142.40   & 10 &  9067.20 & 271.50   & 3 &  15.00 & 119.51   \\
 & 50  & 6 &  128231.50 & 1708.50   & 7 &  37107.57 & 1870.86   & 0 &   --  &  --    \\
 & 60  & 2 &  174494.00 & 3208.00   & 0 &   --  &  --    & 0 &   --  &  --    \\
\hline
\end{tabular}
\end{table}
}
%

\mred{
\begin{table}[h!]
  \centering
  \caption{Low rank ternary instances}
\label{tab:lowrankternary}
  \begin{tabular}{ | c |r r r | r r r ||r r r |}
    \hline
  &   \multicolumn{6}{c||}{Q-MIST}  & \multicolumn{3}{c|}{BARON}   \\
$n$  &  \multicolumn{3}{c}{CD2D}  &  \multicolumn{3}{c||}{CSDP} &  \multicolumn{3}{c|}{} \\
\cline{2-10}
 & \#  & nodes & time  & \#  & nodes & time & \#  & nodes & time  \\
\hline
\hline
10  & 110 &  18.82 & 0.01   & 110 &  8.29 & 0.00   & 110 &  1.00 & 0.05   \\
20  & 110 &  109.98 & 0.10   & 110 &  35.85 & 0.06   & 110 &  1.17 & 0.52   \\
30  & 110 &  538.04 & 0.39   & 110 &  241.24 & 1.67   & 108 &  1.17 & 4.50   \\
40  & 110 &  1858.00 & 3.12   & 110 &  1206.69 & 13.48   & 107 &  1.35 & 39.58   \\
50  & 110 &  6976.22 & 21.12   & 110 &  4284.35 & 77.80   & 89 &  4.39 & 125.09   \\
60  & 110 &  17459.54 & 88.43   & 110 &  15426.85 & 450.68   & 44 &  10.05 & 295.82   \\
70  & 106 &  54070.34 & 444.75   & 90 &  23441.13 & 1051.92   & 13 &  7.00 & 172.25   \\
80  & 64 &  108409.72 & 1302.81   & 29 &  9737.90 & 791.69   & 10 &  1.40 & 67.10   \\
\hline
\end{tabular}
\end{table}
\begin{table}[h!]
  \centering
  \caption{Low rank integer instances}
  \label{tab:lowrankinteger}
  \begin{tabular}{ | c |r r r | r r r ||r r r |}
    \hline
  &   \multicolumn{6}{c||}{Q-MIST}  & \multicolumn{3}{c|}{BARON}   \\
$n$  &  \multicolumn{3}{c}{CD2D}  &  \multicolumn{3}{c||}{CSDP} &  \multicolumn{3}{c|}{} \\
\cline{2-10}
 & \#  & nodes & time  & \#  & nodes & time & \#  & nodes & time  \\
\hline
\hline
10  & 110 &  79.38 & 0.11   & 110 &  17.42 & 0.00   & 110 &  1.55 & 0.08   \\
20  & 110 &  2987.02 & 3.53   & 110 &  181.71 & 1.24   & 66 &  259.98 & 25.57   \\
30  & 106 &  45392.58 & 115.92   & 110 &  1336.84 & 19.40   & 93 &  11.62 & 12.52   \\
40  & 99 &  21928.90 & 104.56   & 109 &  9588.69 & 256.38   & 98 &  4.94 & 36.45   \\
50  & 96 &  60249.35 & 561.40   & 100 &  26046.96 & 1171.06   & 72 &  18.94 & 189.98   \\
60  & 61 &  96534.56 & 1483.84   & 29 &  20258.24 & 1392.90   & 12 &  39.00 & 632.07   \\
70  & 12 &  60000.25 & 1157.42   & 6 &  4934.67 & 544.50   & 0 &   --  &  --    \\
\hline
\end{tabular}
\end{table}
}
It turns out that sparsity does not seem to have an important
  impact on the hardness of the problems when solved with our
  coordinate ascent approach. The size of problems we can solve to
  optimality is very similar for all densities considered, both in the
  ternary and in the integer case. On the other hand, BARON can
  slightly profit from sparser instances. However, our new approach
  can solve significantly more instances than BARON for each value
  of~$p$, except for $p=25$ in the integer case.

Concerning low-rank instances, the effect is not clear: in the ternary case, more instances can be solved by our approach for $n=70$, but for~$n=80$ one instance less is solved within the time limit. In the integer case, our approach produces slightly weaker results for low-rank instances. BARON clearly profits from low-rank input matrices. In summary, both sparse and low rank matrices do not change the running times of our approach significantly, while BARON can (slightly) profit from both properties.

\subsection{Primal solution}

At the root node,  we have performed the evaluation of
Algorithm~\ref{algo:primalsol}, designed to compute an approximate primal
solution of Problem~\eqref{P:SDP-matrix} using
the dual feasible solution $y^*$ of Problem~\eqref{P:Dsdp};
see the details in Section~\ref{Sec:primal}.
Recall that we need to compute the eigenvalue decomposition of
the matrix~$Q-\mathcal{A}^\top y^*$, and set a tolerance to
decide which other eigenvalues will be considered as zero.
In the experiments we have taken into account that~$Q-\mathcal{A}^\top y^*$~has
always at least one zero eigenvalue,
and considered as zero all the eigenvalues smaller or equal to 0.01.
We have run experiments to check the positive semidefiniteness of the
matrix~$X^*$~at the root node of the branch-and-bound tree, with
the dual variables obtained from Algorithms~\ref{algo:cd} and~CD2D.
We did this test for all instances used in the experiments of the previous section.
We have observed that in all the cases the smallest eigenvalue of $X$~is
always greater than $-10^{-14}$. Based on this fact we can conclude that
the method works.

\subsection{Behavior with linear constraints}

In Section~\ref{Sec:lc} we have described how our approach can
be extended  when  inequality constraints are added to
Problem~\eqref{P:Quadratic}. For the experiments in this section
we will consider ternary instances with 
two types of constraints: inequalities of the form $\sum_{i=1}^nx_i\leq0$
and knapsack constraints $a^\top x\leq b$.
The vector $a\in\Rbb^n$ and the right hand side of $a^\top x\leq b$ are
generated as follows: each entry $a_i$ is chosen randomly distributed
in~$\{1,2,\dots,5\}$ and $b$ is randomly distributed in
$\{1,\dots,\sum_{i=1}^na_i\}$.
The objective function is generated as explained before.
Tables~\ref{tab:ternarylc0} and~\ref{tab:ternarylc1} report
the results of the performance  of Algorithm Q-MIST
with~CD2D and CSDP, and compare with BARON.
The dimension $n$ of the problem is chosen  from~10
to 50 and $p\in\{0,10,\dots,100\}$; as before each line in the
tables corresponds to the average computed over 110 instances solved
within the time limit, 10
instances for each combination of $n$ and $p$.

Comparing the results reported in Table~\ref{tab:ternary} with
those of Tables~\ref{tab:ternarylc0} and~\ref{tab:ternarylc1},
one can conclude that the addition of a linear constraint
does not change the overall behavior of our approach.
As it can be seen, Q-MIST -- with both approaches~CD2D and CSDP --
outperforms BARON.
However, Algorithm~CD2D, as shown in Table~\ref{tab:ternary}, is much faster
even if the number of nodes explored is larger.

\begin{table}[htb]
  \centering
  \caption{Results for ternary instances plus $\sum_i^n x_i\leq0$}
 \label{tab:ternarylc0}
  \begin{tabular}{ | c |r r r | r r r ||r r r |}
    \hline
  &   \multicolumn{6}{c||}{Q-MIST}  & \multicolumn{3}{c|}{BARON}   \\
$n$  &  \multicolumn{3}{c}{CD2D}  &  \multicolumn{3}{c||}{CSDP} &  \multicolumn{3}{c|}{} \\
\cline{2-10}
 & \#  & nodes & time  & \#  & nodes & time & \#  & nodes & time  \\
\hline
\hline
10  & 110 &  35.85 & 0.01   & 110 &  12.73 & 0.02   & 110 &  1.29 & 0.09   \\
20  & 110 &  195.56 & 0.35   & 110 &  74.18 & 0.34   & 110 &  6.70 & 1.10   \\
30  & 110 &  993.21 & 1.08   & 110 &  332.38 & 2.65   & 110 &  17.31 & 43.86   \\
40  & 110 &  3160.16 & 4.85   & 110 &  1199.55 & 16.47   & 48 &  13.44 & 233.40   \\
50  & 110 &  13916.13 & 40.35   & 110 &  7235.00 & 159.66   & 20 &  61.20 & 174.96   \\
\hline
\end{tabular}
\end{table}
%
\begin{table}[htb]
  \centering
  \caption{Results for ternary instances plus knapsack constraint}
 \label{tab:ternarylc1}
  \begin{tabular}{ | c |r r r | r r r ||r r r |}
    \hline
  &   \multicolumn{6}{c||}{Q-MIST}  & \multicolumn{3}{c|}{BARON}   \\
$n$  &  \multicolumn{3}{c}{CD2D}  &  \multicolumn{3}{c||}{CSDP} &  \multicolumn{3}{c|}{} \\
\cline{2-10}
 & \#  & nodes & time  & \#  & nodes & time & \#  & nodes & time  \\
\hline
\hline
10  & 110 &  29.36 & 0.01   & 110 &  11.15 & 0.05   & 110 &  1.41 & 0.08   \\
20  & 110 &  185.78 & 0.24   & 110 &  70.75 & 0.29   & 110 &  9.15 & 1.04   \\
30  & 110 &  685.64 & 0.74   & 110 &  247.80 & 2.16   & 110 &  16.04 & 38.17   \\
40  & 110 &  2361.33 & 3.85   & 110 &  1035.29 & 14.95   & 56 &  37.23 & 289.56   \\
50  & 110 &  9844.31 & 31.10   & 110 &  7140.91 & 165.15   & 21 &  67.48 & 191.01   \\
\hline
\end{tabular}
\end{table}

\section{Conclusion}
We have developed an algorithm that on the one hand exploits
the structure of the semidefinite relaxations proposed by Buchheim and Wiegele,
namely a small total number of active constraints and constraint
matrices characterized by a low rank.
On the other hand, our algorithm exploits this special structure by solving
the dual problem of the semidefinite relaxation, using a barrier method in
combination with a coordinate-wise exact line search, motivated
by the algorithm presented by Dong. The main ingredient of our
algorithm is the computationally cheap update at each iteration and
an easy computation of the exact step size. Compared to interior
point methods, our approach is much faster in obtaining strong dual
bounds. Moreover, no explicit separation and re-optimization is
necessary even if the set of primal constraints is large, since in
our dual approach this is covered by implicitly considering all
primal constraints when selecting the next coordinate.
Even more, the structure of the problem allows us to perform a
plane search instead of a single line search, this speeds up
the convergence of the algorithm.
Finally, linear constraints are easily integrated into the
algorithmic framework.

We have performed experimental comparisons on randomly
generated instances, showing that our approach  significantly
improves the performance of Q-MIST when compared with CSDP
and outperforms other specialized global optimization software, such as BARON.



\bibliographystyle{spmpsci}      
\bibliography{qmist-cd}   

\ifappendix
\newpage
\appendix
\section{Step size for CD}
\label{Sec:stepsizeCD}
Each constraint matrix~$A_{ij}$ can be factored as follows:
\[
A_{ij}=E_{ij}IC_{ij},
\]
where~$E_{ij}\in \Rbb^{(n+1)\times2}$ is defined by~$E_{ij}:=(e_{0}\; e_{i})$,~$e_{0},
e_{i}\in \Rbb^{n+1}$, ~$C_{ij}\in \Rbb^{2\times(n+1)}$ is defined
by~$C_{ij}:=(A_{ij})_{\{0,i\},\{0,\dots,n\}}$, and~$I$ is the~$2\times2$-identity matrix, i.e.,
\[
E_{ij}:=
\begin{pmatrix}
1 & 0\\
\vdots & \vdots\\
0 & 1\\
\vdots & \vdots\\
0 & 0
\end{pmatrix}
\quad
\text{ and }
\quad
C_{ij}=
\begin{pmatrix}
(A_{ij})_{00} & 0 & \hdots &0 & (A_{ij})_{0i} &0& \hdots 0\\
(A_{ij})_{0i} & 0 & \hdots &0 & (A_{ij})_{ii} &0& \hdots 0
\end{pmatrix}
\]
By the Woodbury formula~\cite{Hager(1989)}
\begin{equation}
\label{Eq:inverseranktwo}
(W^{-1} -sA_{ij})^{-1}=(W^{-1} -sE_{ij}IC_{ij})^{-1}=W+WE_{ij}(\tfrac{1}{s}I-C_{ij}WE_{ij})^{-1}C_{ij}W\;.
\end{equation}
Notice that the matrix~$\tfrac{1}{s}I-C_{ij}WE_{ij}$ is a~$2\times2$-matrix,
so its inverse can be easily computed even as a closed formula.

On the other hand, from Lemma~\ref{lemma:rank}, we know under which
conditions a constraint matrix~$A_{ij}$ has rank-one. In that case,
we obtain the following factorization:
\begin{equation}
\label{eq:factormatA}
A_{ij} = (A_{ij})_{ii}vv^\top,
\end{equation}
where~$v:= (A_{ij})_{0i}e_0 + (A_{ij})_{ii}e_i$.
The inverse of~$(W^{-1} -sA_{ij})$ is then computed using the Woodbury formula
for rank-one update,
\begin{equation}
\label{Eq:inverserankone}
(W^{-1} -sA_{ij})^{-1} = (W^{-1}-s(A_{ij})_{ii}vv^\top)^{-1}
= W+\frac{(A_{ij})_{ii}s}{1-(A_{ij})_{ii}sv^\top Wv} Wvv^\top W.
\end{equation}


Now, we need to find the value of~$s$ that makes the gradient in
\eqref{Eq:gradient} zero, this requires to solve the following equation
\begin{equation*}
\beta_{ij}-\sigma\pin{A_{ij}}{(W^{-1} -sA_{ij})^{-1}}=0.
\end{equation*}
In order to solve this equation, we distinguish two possible cases,
depending on the rank of the constraint matrix of the chosen coordinate.
We use the factorizations of the matrix~$A_{ij}$ explained above.

\paragraph{Rank-two.}
By replacing the inverse matrix~\eqref{Eq:inverseranktwo} in
the gradient~\eqref{Eq:gradient} and setting it to zero, we obtain
\begin{equation}
\label{Eq:gradeqzero}
\beta_{ij}-\sigma\pin{A_{ij}}{W}-\sigma\pin{A_{ij}}{WE_{ij}(\tfrac{1}{s}I+C_{ij}WE_{ij})^{-1}C_{ij}W}=0.
\end{equation}
Due to the sparsity of the constraint matrices~$A_{ij}$, the inner matrix product
is simplified a lot, in fact we have to compute only the entries~$00$,~$0i$,
$0i$ and~$ii$ of the  matrix product
$WE_{ij}(\tfrac{1}{s}I+C_{ij}WE_{ij})^{-1}C_{ij}W$.
We only need to compute rows $0$ and $i$  of the matrix product $WE_{ij}$ and columns $0$ and $i$ of $C_{ij}W$,
\[
WE_{ij}=
\begin{pmatrix}
w_{00} & w_{0i}\\
\vdots & \vdots \\
w_{0i} & w_{ii}\\
\vdots & \vdots
\end{pmatrix}
\quad
C_{ij}W=
\begin{pmatrix}
(A_{ij})_{00}w_{00} +(A_{ij})_{0i} w_{0i} &\hdots & (A_{ij})_{00}w_{0i} +(A_{ij})_{0i} w_{ii} & \dots\\
(A_{ij})_{0i}w_{00} +(A_{ij})_{ii} w_{0i} &\hdots & (A_{ij})_{0i}w_{0i} +(A_{ij})_{ii} w_{ii} & \dots
\end{pmatrix}.
\]
From the last matrix we have
\[
C_{ij}WE_{ij}=
\begin{pmatrix}
(A_{ij})_{00}w_{00} +(A_{ij})_{0i} w_{0i} & (A_{ij})_{00}w_{0i} +(A_{ij})_{0i} w_{ii}\\
(A_{ij})_{0i}w_{00} +(A_{ij})_{ii} w_{0i} & (A_{ij})_{0i}w_{0i} +(A_{ij})_{ii} w_{ii}
\end{pmatrix}.
\]
Moreover, the inverse of the matrix $(\tfrac{1}{s}I+C_{ij}WE_{ij})$ is computed easily, its entries are rational expressions on~$s$. Finally, from~\eqref{Eq:gradeqzero} we obtain a rational equation on~$s$ of degree two, namely
\[
\frac{\beta_{ij} \alpha_1 ws^2+(2\sigma \alpha_1 w -\alpha_2\beta_{ij})s+\beta_{ij}-\sigma\alpha_2}{\alpha_1 ws^2-\alpha_2s+1}=0,
\]
where
\begin{align*}	
\alpha_1 & := (A_{ij})_{00}(A_{ij})_{ii}- (A_{ij})_{0i}^2,\\
\alpha_2 & := (A_{ij})_{00}w_{00}+2(A_{ij})_{0i}w_{0i}+(A_{ij})_{ii}w_{ii},\\
w & := w_{00}w_{ii}-w_{0i}^2.
\end{align*}
Theorem~\ref{lem:onedimstep} shows that, since~$s\mapsto f(y +se_{ij};\sigma)$
is continuously differentiable on the level sets, the denominator of the latter
equation can not become zero before finding a point where the gradient is zero.
Therefore, the step size~$s$ is obtained setting the numerator to zero, and
using the quadratic formula for the roots of the general quadratic equation:
\[
s=\frac{-2\sigma \alpha_1 w +\alpha_2\beta_{ij}
\pm \sqrt{(2\sigma \alpha_1 w -\alpha_2\beta_{ij})^2 -4\beta_{ij} \alpha_1 w(\beta_{ij}-\sigma\alpha_2)}}{2\beta_{ij} \alpha_1 w}.
\]
Then, according to Theorem~\ref{lem:onedimstep} we will need to take
the smallest/biggest $s$ on the right direction of the chosen coordinate.
\paragraph{Rank-one.} In case the rank of~$A_{ij}$ is one, the computations can
 be simplified. We proceed as before, replacing \eqref{Eq:inverserankone} in
the gradient~\eqref{Eq:gradient} and setting it to zero:
\[
\beta_{ij}-\sigma\pin{(A_{ij})_{ii}vv^\top}{W+\frac{(A_{ij})_{ii}s}{1-(A_{ij})_{ii}s v^\top Wv} Wvv^\top W} = 0.
\]
Denote~$t:=\pin{vv^\top}{W}= v^\top W v = v_0^2w_{00}+2v_0v_iw_{0i}+v_i^2w_{ii}$,
then~$\pin{vv^\top}{ Wvv^\top W}= (v^\top W v)^2= t^2$. Replacing this in the last
equation yields
\begin{equation}
\label{eq:rankonegrad}
\beta_{ij}-\sigma (A_{ij})_{ii} t -\sigma t^2\frac{(A_{ij})_{ii}^2s}{1-(A_{ij})_{ii}ts}  = 0.
\end{equation}
The last expression turns out to be a rational equation linear in~$s$,
and the step size is
\[
s=\frac{1}{(A_{ij})_{ii}t}-\frac{\sigma}{\beta_{ij}}.
\]
Notice that~$s\neq \frac{1}{(A_{ij})_{ii}t}$ and hence the denominator
in~\eqref{eq:rankonegrad} is different from zero.
We have to point out that the zero coordinate can also be chosen as ascent
direction, in that case the gradient is
\[
\nabla_{s}f(y +se_0;\sigma)=1-\sigma\pin{A_0}{(W^{-1} -sA_0)^{-1}}.
\]
As before, the inverse of~$W^{-1} -sA_0$ is computed using the  Woodbury
formula for rank-one update
\begin{align*}
(W^{-1} -sA_{0})^{-1} =(W^{-1} - se_{0}e_{0}^\top)^{-1}
= W+\frac{s}{1-sw_{00}} (We_{0}) (We_{0})^\top.
\end{align*}
The computation of the step size becomes simpler, we just need to
find a solution of the linear equation
\[
1-\sigma\pin{A_0}{(W^{-1} -sA_0)^{-1}}=0.
\]
Solving the last equation, the step size is
\[
s=\frac{1}{w_{00}}-\sigma.
\]
A similar formula for the step size is obtained for other cases when
the constraint matrix~$A_{ij}$ has rank-one and corresponds to
an upper facet such that~$l_i=-u_i$. Since in this
case~$(A_{ij})_{00} = (A_{ij})_{0i}=0$  and~$(A_{ij})_{ii} =1$,
the factorization of~$A_{ij}$ in~\eqref{eq:factormatA} reduces to
\[
A_{ij}=e_ie_i^\top,
\]
and~$t=w_{ii}$. Thus,  the step is:
\[
s=\frac{1}{w_{ii}}-\frac{\sigma}{\beta_{ij}}.
\]

With the step size~$s^{(k)}$ determined, we use the following
formulae for a fast update, again making use of the Woodbury formula:
\begin{align*}
y^{(k+1)}&:=y^{(k)}+s^{(k)}e_{ij^{(k)}}\\
W^{(k+1)}&:=W^{(k)}+W^{(k)}E_{ij^{(k)}}\big(\tfrac{1}{s^{(k)}}I-C_{ij^{(k)}}W^{(k)}E_{ij^{(k)}}\big)^{-1}C_{ij^{(k)}}W^{(k)},
\end{align*}
or
\[
W^{(k+1)} := W^{(k)}+\frac{(A_{ij})_{ii}s^{(k)}}{1-(A_{ij})_{ii}s^{(k)}} (W^{(k)}v^{(k)})(W^{(k)}v^{(k)})^\top.
\]

\section{Two dimensional approach}
\label{Sec:stepsizeCD2D}
For computing $s_0(s)$, we need to compute $w(s)_{00}$. We have that
\begin{align*}
w(s)_{00} &= (W^{-1} -sA_{ij})^{-1}_{00}\\
&= w_{00}+(WE_{ij}\big(\tfrac{1}{s}I+C_{ij}WE_{ij}\big)^{-1}C_{ij}W)_{00}.
\end{align*}
As explained in the previous section, the computations are simplified due to the structure of the matrices involved.
We obtain that
\[
w(s)_{00} = -\frac{(A_{ij})_{ii}ws-w_{00}}{\alpha_1 ws^2-\alpha_2s+1}-\sigma,
\]
with $\alpha_1$, $\alpha_2$ and $w$ defined as in the last section. Thus
\[
s_0(s)=\frac{1}{w(s)_{00}}=-\frac{\alpha_1 ws^2-\alpha_2s+1}{(A_{ij})_{ii}ws-w_{00}}-\sigma.
\]
In order to choose the coordinate direction $e_{ij}$, we need to compute $g_{ij}'(0)$, we have
\begin{align*}
g_{ij}'(0)&=s_0'(0)+\beta_{ij} -\sigma \pin{s_0'(0)A_0+A_{ij}}{(W^{-1} -s_0(0)A_0-0A_{ij})^{-1}}\\
&=s_0'(0)+\beta_{ij} -\sigma \pin{s_0'(0)A_0+A_{ij}}{W+\frac{s_0(0)}{1-s_0(0)w_{00}}(We_0)^\top(We_0)^\top},
\end{align*}
and in the last inner matrix product we only need to consider the entries $00$, $0i$ and $ii$, thus
\begin{align*}
g_{ij}'(0) &=s_0'(0)+\beta_{ij} -\sigma \left((s_0'(0)+(A_{ij})_{00})(w_{00}+\frac{1-\sigma w_{00}}{\sigma w_{00}^2}w_{00}^2)\right.\\
&\left.~+2(A_{ij})_{00}(w_{0i}+\frac{1-\sigma w_{00}}{\sigma w_{00}^2}w_{00}w_{0i}) +(A_{ij})_{ii}(w_{ii}+\frac{1-\sigma w_{00}}{\sigma w_{00}^2}w_{ii}^2)\right)\\
&=\beta_{ij}-(A_{ij})_{00}-2(A_{ij})_{0i}\frac{w_{0i}}{w_{00}}-(A_{ij})_{ii}w_{ii}\sigma +(A_{ij})_{ii}(1-\sigma w_{00})\frac{w_{ii}^2}{w_{00}^2}.
\end{align*}
More explicitly, for upper and lower bound facets, we get 
\[
g_{ij}'(0)=
\begin{cases}
j(j+1)-2\frac{w_{0i}}{w_{00}}j-\frac{w_{0i}}{w_{00}}-(\sigma w_{00}-1)\frac{w_{ii}^2}{w_{00}^2}+\sigma w_{ii} &  j=l_i,\dots,u_i-1,\\
l_{i}u_{i}+ \frac{w_{0i}}{w_{00}}(l_{i}+u_{i}) +(\sigma w_{00}-1)\frac{w_{ii}^2}{w_{00}^2}-\sigma w_{ii} &  j=u_i.
\end{cases}
\]

\section{Algorithm CD including linear constraints}
\label{sec:lincd}
The addition of~$p$ linear constraints in the primal problem implies that
for the search of a coordinate direction there are~$p$ additional
potential directions.
As before, the entries of the gradient for the new coordinates can be
explicitly computed as
\begin{align*}
\nabla_{y}f(y;\sigma)_{j}&=\beta_{j}-\sigma \pin{W}{A_{j}}\\
&=\beta_{j}-\sigma((A_j)_{00}w_{00}+2\sum_{k=1}^{n}(A_j)_{0k}w_{0k}).
\end{align*}
We then choose the coordinate of the gradient with largest absolute value,
considering coordinates both corresponding to the lower bounding facets,
the upper bounding facet and the new linear constraints.
In Section~\ref{Sec:steponedimension}, we observed that at most
$1+4n$ candidates have to be considered to select the coordinate direction.
Thus, in this case, we will have at most~$1+4n+p$ candidates.


The computation of the step size follows an  analogous procedure as in
Section~\ref{Sec:steponedimension}.
Therefore, if one of
the new possible candidates for  coordinate direction~$e_j\in\Rbb^{m+p+1}$
for~$j\in\{1,\dots,p\}$ has been chosen, we need to compute~$s$ such
that either
\[
\nabla_{s}f(y+se_j;\sigma)=0 \quad\text{ and } \quad s\leq -y_{j}
\]
or
\[
\nabla_{s}f(y+se_j;\sigma) >0 \quad \text{ and } \quad s= -y_{j}.
\]
We have that
\begin{equation}
\label{Eq:gradlc}
\nabla_{s}f(y+se_j;\sigma)_{j}=\beta_{j}-\sigma\pin{A_{j}}{(W^{-1} -sA_{j})^{-1}}.
\end{equation}




The existence of an optimal step size now depends on primal feasibility.
There is no guarantee that the level sets of the function are bounded, or as
we already mentioned, if the primal problem is not feasible, the dual
problem will be unbounded.
Testing primal feasibility is a difficult task, however,
from~Lemma~\ref{lem:existence} we know that if there exists $s$ in the
correct direction of the line search that makes the gradient~\eqref{Eq:gradlc}
zero, then there exists also one on the feasible region.
This implies the following result.

\begin{theorem}\label{Th:steplc}
~
\begin{itemize}
\item[(i)] Let the coordinate~$j$ be such that~$\nabla_{y}f(y;\sigma)_{j}>0$
  and~$y_{j}<0$.
  If the gradient~\eqref{Eq:gradlc} has a positive root,
  then for the smallest positive root~$s^+$, either~$y+s^+e_{j}$~is dual
  feasible and~$\nabla_{s}f(y +s^+e_{j};\sigma)=0$, or~$y_{j}+s^+>0$,
  $y -y_{j}e_{j}$ is dual feasible, and~$\nabla_{s}f(y -y_{j}e_{j};\sigma)>0$.
  Otherwise, $y+se_{ij}$ is dual feasible with
  $\nabla_{s}f(y~+~se_{j};~\sigma)~>~0~$ for all $s\in[0,-y_{ij}]$.
\item[(ii)] Let the coordinate~$j$ be such that~$\nabla_{y}f(y;\sigma)_{j}<0$.
  If the gradient~\eqref{Eq:gradlc} has a negative root,
  then for the biggest negative root~$s^-$, the point~$y+s^-e_{j}$~is
  dual feasible and~$\nabla_{s}f(y +s^-e_{j};\sigma)=0$.
  Otherwise, $y+se_{ij}$ is dual feasible with $\nabla_{s}f(y+se_{j};\sigma)>0$ for all $s\le 0$.
\end{itemize}
\end{theorem}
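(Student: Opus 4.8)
The plan is to mirror the proofs of Lemma~\ref{lem:existence} and Theorem~\ref{lem:onedimstep}, with the single but essential difference that the boundedness of the level sets (Theorem~\ref{Th:LevelSets}) is no longer available, because primal strict feasibility of~\eqref{P:SDPLinearConst-matrix} may fail. Consequently, the guarantee ``a root in the correct direction always exists'' of Theorem~\ref{lem:onedimstep}\,(ii) must be replaced by the dichotomy ``either a usable root of~\eqref{Eq:gradlc} exists, or the barrier objective is unbounded along the chosen ray'', the latter being a certificate of primal infeasibility by Corollary~\ref{Cor:priminf} together with strong duality (Corollary~\ref{Cor:strictlyfeas}).

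First I would observe that Lemma~\ref{lem:existence}, although stated for the box-constraint coordinates~$e_{ij}$, applies verbatim to the new coordinates~$e_j$: its proof uses only that $s\mapsto f(y+se_j;\sigma)$ is continuously differentiable and strictly concave on the open set $\{s\mid Q-\mathcal A^\top(y+se_j)\succ 0\}$, that $\nabla_s f(y+se_j;\sigma)|_{s=0}=\nabla_y f(y;\sigma)_j$, and that the only sign restriction is $y_j\le 0$ --- all of which hold here since the additional matrix~$A_j$ changes neither the concavity nor the differentiability of the barrier objective. Hence, in case~(i) with a positive root of~\eqref{Eq:gradlc}, the first alternative follows immediately from Lemma~\ref{lem:existence}\,(i) applied to~$e_j$; likewise, in case~(ii) with a negative root, the biggest negative root~$s^-$ satisfies $s^-\le 0\le -y_j$, so Lemma~\ref{lem:existence}\,(ii) yields that~$y+s^-e_j$ is dual feasible with vanishing directional derivative.

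For the ``otherwise'' branches I would use the concavity-plus-barrier-blow-up mechanism already employed inside the proof of Lemma~\ref{lem:existence}. Suppose in case~(i) that~\eqref{Eq:gradlc} has no positive root. If the feasible set $\{s\ge 0\mid Q-\mathcal A^\top(y+se_j)\succ 0\}$ were a bounded interval $[0,\bar s)$, then $f(y+se_j;\sigma)\to-\infty$ as $s\uparrow\bar s$; since a concave $C^1$ function with $\nabla_s f(y+se_j;\sigma)|_{s=0}=\nabla_y f(y;\sigma)_j>0$ that tends to $-\infty$ must have an interior maximizer where the derivative vanishes, this contradicts the assumption. Hence $y+se_j$ is dual feasible for all $s\ge 0$, and by continuous differentiability together with $\nabla_y f(y;\sigma)_j>0$ we get $\nabla_s f(y+se_j;\sigma)>0$ on all of $[0,\infty)$, in particular on $[0,-y_j]$, so stopping at $s=-y_j$ is feasible. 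The ``otherwise'' branch of~(ii) is identical up to the sign of the derivative and the direction $s\le 0$; there $y+se_j$ stays dual feasible for all $s\le 0$ while the derivative never vanishes, which makes $f(y+se_j;\sigma)$ monotone and unbounded along the ray, so~\eqref{P:Dsdplc-barrier} is unbounded and, by Corollary~\ref{Cor:priminf}, Problem~\eqref{P:SDPLinearConst-matrix} is infeasible --- exactly the conclusion the algorithm reports in this situation.

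The only real obstacle is this ``otherwise'' case, namely ruling out that the ray reaches the boundary of the semidefinite cone \emph{before} the directional gradient changes sign; the concavity argument above resolves it, since a cone-boundary crossing at finite~$s$ would force a root of~\eqref{Eq:gradlc} of the sign excluded by the branch hypothesis. Everything else is a direct transcription of the box-constraint arguments.
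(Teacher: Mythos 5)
Your proposal is correct and follows essentially the same route as the paper: the paper derives Theorem~\ref{Th:steplc} directly from Lemma~\ref{lem:existence} (noting explicitly that a root in the correct search direction, if it exists, yields a feasible one), and handles the ``otherwise'' branches by the same continuity/concavity argument, with the key observation --- which you make correctly --- that Theorem~\ref{Th:LevelSets} is no longer available, so the existence of a negative root in part~(ii) can no longer be guaranteed. Your closing remark that the no-root case certifies unboundedness of the barrier problem is a side comment not needed for the stated claim (and ``monotone with nonvanishing derivative'' alone does not imply unboundedness in general, though it does hold here because the derivative tends to~$\beta_j$), but this does not affect the proof of the theorem itself.
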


As before, in order to find the step size, it is necessary to compute the
inverse of~$W^{-1} -sA_{j}$.
As it was mentioned, the  constraint matrices~$A_j$ are rank-two matrices.
They admit the following factorization
\[
A_j= E_j I C_j,
\]
where
\[
E_j =
\begin{pmatrix}
\tfrac{1}{2}(A_j)_{00}  & 1\\
(A_j)_{01} & 0\\
\vdots & \vdots\\
(A_j)_{0n} & 0
\end{pmatrix}
\quad \text{ and } \quad
C_j =
\begin{pmatrix}
 1 & 0 & \hdots & 0\\
\tfrac{1}{2}(A_j)_{00} &  (A_j)_{01} & \hdots & (A_j)_{0n}
\end{pmatrix}
.
\]
With the Woodbury formula and the factorization above, we have that
the inner product of~$A_j$ and~$(W^{-1} -sA_{j})^{-1}$ reduces to the
inner product of two~$2\times2$ matrices:
\begin{align*}
\pin{A_{j}}{(W^{-1} -sA_{j})^{-1}}
&= \pin{E_jIC_j}{W + WE_j(\tfrac{1}{s}I-C_jWE_j)^{-1} C_jW} \\
&= \pin{I}{E_j^\top WC_j^\top + E_j^\top WE_j(\tfrac{1}{s}I-C_jWE_j)^{-1}C_jWC_j^\top}.
\end{align*}
We obtain
\begin{align*}
E_j^\top WE_j &=
\begin{pmatrix}
d & f\\
f & w_{00}
\end{pmatrix},\quad
C_jWC_j^\top =
\begin{pmatrix}
w_{00} & f\\
f & d
\end{pmatrix},\\
C_jWE_j  &=
\begin{pmatrix}
f & w_{00}\\
d & f
\end{pmatrix},\quad
E_j^\top WC_j^\top  =
\begin{pmatrix}
f & d\\
w_{00} & f
\end{pmatrix},
\end{align*}
where
\begin{align*}
d &= \tfrac{1}{4}w_{00}(A_j)_{00}^2 + (A_j)_{00}\sum_{i=1}^n w_{0i}(A_j)_{0i}
+\sum_{i=1}^n\sum_{k=1}^n w_{ik}(A_j)_{0i}(A_j)_{0k} \\
&= \pin{W}{(A_j)_{0\cdot}(A_j)_{0\cdot}^\top}, \\
f &= \tfrac{1}{2}w_{00}(A_j)_{00} +\sum_{i=1}^n w_{0i}(A_j)_{0i} \\
&= W_{0\cdot}^\top (A_j)_{0\cdot}.
\end{align*}
Replacing the inner product in the gradient~\eqref{Eq:gradlc}, we obtain a
rational function of degree two
\[
\nabla_{s}f(y+se_j;\sigma)_{j}
=\frac{\beta_j (d w_{00}-f^2)s^2+(2d\sigma w_{00}- 2f^2\sigma +2\beta_j f)s+ 2f\sigma-\beta_j}{(dw_{00}-f^2)s^2+2fs-1}.
\]
Finally the step size is obtained setting the numerator to zero, yielding
\[
s = \frac{-d\sigma w_{00}+f^2 \sigma-\beta_j f\pm\sqrt{d^2 \sigma^2 w_{00}^2-2 d f^2 \sigma^2 w_{00}+f^4 \sigma^2+\beta_j^2 d w_{00}}}{\beta_j (d w_{00}-f^2)}.
\]

In the implementation of the algorithm, if no root of the
gradient~\eqref{Eq:gradlc}
is found in the right direction, the step size has to be set to
$-y_{j}$ when the coordinate~$j$ is such that~$\nabla_{y}f(y;\sigma)_{j}>0$
and~$y_{j}<0$, or $s=M$, where $M\ll0$,
when the coordinate~$j$ is such that~$\nabla_{y}f(y;\sigma)_{j}<0$.

It is clear that Algorithm~\ref{algo:cd} can be easily extended to compute
lower bounds for the optimal value of Problem~\eqref{P:SDPLinearConst-matrix}.

\section{Algorithm CD2D including linear constraints}\label{sec:lincd2d}
A two-dimensional update is also possible for solving the dual of
Problem~\eqref{P:SDPLinearConst-matrix}, again in this case, any linear
combination of a constraint matrix~$A_j$ with~$A_0$ remains being a rank-two
matrix.
The optimal two-dimensional step size~$(s_0(s),s)$ along the coordinate plane
spanned by~$(e_0,e_{j})$ can be computed following an analogous procedure to the
 one explained in Section~\ref{Sec:SimultaneusUpdate}. It turns out,
in this case, that the computation of the step size is technically less
complicated.
Lemma~\ref{lemma:steps02D} can be used to compute the step size~$s_0(s)$ along
the direction $e_0$, in terms of a given step size $s$ along coordinate
direction~$e_j$,
namely,
\[
s_0(s) =\frac{1}{{w(s)}_{00}}-\sigma.
\]
Recall that~$W(s)=(W^{-1}-sA_j)^{-1}$, with need to compute the first entry of this matrix. We have
\begin{align*}
W(s)_{00}&=(W^{-1}-sA_j)^{-1}_{00} \\
&= (W+ WE_j(\tfrac{1}{s}I-C_jWE_j)^{-1} C_jW)_{00}\\
&= w_{00}+ (WE_j(\tfrac{1}{s}I-C_jWE_j)^{-1} C_jW)_{00}.
\end{align*}
The inverse of $WE_j(\tfrac{1}{s}I-C_jWE_j)$ is easy to compute, since it is a $2\times2$-matrix.
\[
 (WE_j(\tfrac{1}{s}I-C_jWE_j))^{-1} = \tfrac{1}{(\tfrac{1}{s}-f)^2-dw_{00}}
\begin{pmatrix}
\tfrac{1}{s}-f & w_{00}\\
d & \tfrac{1}{s}-f
\end{pmatrix}
\]
with~$f$,~$d$ defined as in the last section. From the matrix products $WE_j$ and $C_jW$ we need to compute only the first row and column, respectively:
\[
WE_j =
\begin{pmatrix}
f & w_{00}\\
\vdots & \vdots
\end{pmatrix}
\quad
C_jW =
\begin{pmatrix}
 w_{00} & \hdots\\
f & \hdots
\end{pmatrix}.
\]
We obtain that
\begin{align*}
w(s)_{00} = w_{00}+ \tfrac{1}{(\tfrac{1}{s}-f)^2-dw_{00}}(2fw_{00}(\tfrac{1}{s}-f)+dw_{00}^2+f^2w_{00})\\
=\frac{w_{00}}{(dw_{00}-f^2)s^2+2fs+\sigma w_{00}-1}.
\end{align*}
And thus
\[
s_0(s) =
-\frac{1}{w_{00}}((dw_{00}-f^2)s^2+2fs+\sigma w_{00}-1).
\]
We then can define the function
\[
g_{j}(s) := f(y +s_0(s)e_0+se_{j};\sigma)
\]
over the set~$\{s\in\Rbb\mid Q-\mathcal{A}^\top (y+s_0(s)e_0+se_{j}) %
\succ0\}$. We have to solve a similar problem to \eqref{P:simultLinesearch},
namely, we need to find~$s\in\Rbb$ such that
\[
(g_{j}'(s)=0
\text{ and }
s\leq -y_{j})
\quad
\text{ or }
\quad
(g_{j}'(s) > 0
\text{ and }
s= -y_{j}).
\]
We thus need to compute the derivative of~$g_j(s)$
\begin{equation}
\label{Eq:gradlc2d}
g_{j}'(s)
=s_0'(s)+\beta_{j} -\sigma \pin{s_0'(s)A_0+A_{j}}{(W^{-1} -s_0(s)A_0-sA_{j})^{-1}}.
\end{equation}

As we already pointed out, the existence of a step size is related
with primal feasibility.
We have the following theorem that, analogous to Theorem~\ref{Th:steplc},
is a direct consequence of Lemma~\ref{lem:existence}.

\begin{theorem}\label{Th:steplc2d}
~
\begin{itemize}
\item[(i)] Let the coordinate~$j$ be such that~$g_{j}'(0)>0$ and~$y_{j}<0$.
  If the derivative~\eqref{Eq:gradlc2d} has a positive root,
  then for the smallest positive root~$s^+$, either $y+s_0(s^+)e_0+s^+e_{j}$
  is dual feasible and~$g_{j}'(s^+)=0$, or $y_{j}+s^+>0$,
  $y+s_0(-y_{j})e_0-y_{j}e_{j}$ is dual feasible   and~$g_{j}'(-y_j)>0$.
  Otherwise, $y+s_0(s)e_0+se_{j}$ is dual feasible with $g_j'(s)>0$ for all
  $s\in [0,-y_{ij}]$.
\item[(ii)] Let the coordinate~$j$ be such that~$g_{j}'(0)<0$.
  If the derivative~\eqref{Eq:gradlc2d} has a negative root,
  then for the biggest negative~$s^-$, the point~$y+s_0(s^-)e_0+s^-e_{j}$~is
  dual feasible and~$g_{j}'(s^-)=0$.
  Otherwise, $y+s_0(s)e_0+se_{j}$ is dual feasible with with~$g_{j}'(s)>0$
  for all~$s\leq 0$.
\end{itemize}
\end{theorem}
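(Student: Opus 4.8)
The plan is to mirror the proof of Theorem~\ref{Th:steplc}, simply replacing the coordinate line $s\mapsto y+se_j$ by the curve $s\mapsto y+s_0(s)e_0+se_j$ and the elementary gradient argument by the corresponding facts about the partially maximized objective $g_j$. First I would record the structural properties of $g_j$. Since $s_0(s)$ is the \emph{unique} maximizer of $s_0\mapsto f(y+s_0e_0+se_j;\sigma)$ by Lemma~\ref{lemma:steps02D}, the function $g_j(s)=f(y+s_0(s)e_0+se_j;\sigma)$ is the partial maximum of the jointly concave function $(s_0,s)\mapsto f(y+s_0e_0+se_j;\sigma)$ over the sign-unconstrained coordinate $s_0$; hence $g_j$ is concave on its domain $I:=\{s\mid \exists\, s_0:\ Q-\mathcal{A}^\top(y+s_0e_0+se_j)\succ0\}$, which is an open interval (the projection of an open convex set) and contains $0$ in its interior because $y$ itself is strictly dual feasible (such a $y$ exists by Corollary~\ref{Cor:strictlyfeas}). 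By smoothness of $\log\det$ on the interior of the semidefinite cone together with the envelope theorem, $g_j$ is continuously differentiable on $I$ with derivative~\eqref{Eq:gradlc2d}, and concavity makes $g_j'$ non-increasing. As in the one-dimensional arguments, on $I$ the denominator of the rational expression~\eqref{Eq:gradlc2d} cannot vanish before $g_j'$ does, so ``root of~\eqref{Eq:gradlc2d} in $I$'' and ``zero of $g_j'$ in $I$'' coincide.

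Next I would run the case analysis exactly as in Lemma~\ref{lem:existence} and Theorem~\ref{Th:steplc}. For part~(i), suppose $g_j'(0)>0$ and $y_j<0$. If~\eqref{Eq:gradlc2d} has a positive root, let $s^+$ be the smallest; by definition $s^+\in I$, hence $[0,s^+]\subseteq I$ and $y+s_0(s)e_0+se_j$ is strictly semidefinite-feasible for all $s\in[0,s^+]$, and, $g_j'$ being non-increasing with no zero on $(0,s^+)$, $g_j'>0$ on $[0,s^+)$. If $s^+\le -y_j$ then $y+s_0(s^+)e_0+s^+e_j$ also satisfies $y_j+s^+\le0$, so it is dual feasible with $g_j'(s^+)=0$; if $s^+>-y_j$ then $-y_j\in[0,s^+)$, so $g_j'(-y_j)>0$ and $y+s_0(-y_j)e_0-y_je_j$ is dual feasible ($-y_j\in I$ and $y_j+(-y_j)=0$). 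If~\eqref{Eq:gradlc2d} has no positive root, then $g_j'>0$ throughout $I\cap[0,\infty)$, and it only remains to see $[0,-y_j]\subseteq I$: if $\sup I=:\bar s\le -y_j<\infty$, then $g_j$ would be increasing on $[0,\bar s)$ while tending to $-\infty$ as $s\to\bar s^-$ by the barrier property, a contradiction. Part~(ii) is analogous and slightly simpler: if $g_j'(0)<0$ and~\eqref{Eq:gradlc2d} has a negative root, the biggest such root $s^-$ lies in $I$, so $[s^-,0]\subseteq I$, the semidefinite constraint holds there, and the sign constraint $y_j+s^-\le0$ holds automatically since $s^-\le0\le-y_j$; if there is no negative root, then $g_j'<0$ on $I\cap(-\infty,0]$ and the same barrier argument (now as $s\to-\infty$) gives $(-\infty,0]\subseteq I$.

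The hard part will be the ``barrier property'' of $g_j$ invoked in the two root-free cases, namely that $g_j(s)\to-\infty$ whenever $s$ approaches a finite endpoint of $I$. Unlike a line search, the curve $s\mapsto y+s_0(s)e_0+se_j$ need not stay bounded, so one cannot simply argue that the linear part of $f$ is bounded while $\log\det\to-\infty$. Instead one has to use the explicit form of $s_0(s)$ through $w(s)_{00}$ (Lemma~\ref{lemma:steps02D} and the computation in Appendix~\ref{Sec:stepsizeCD2D}) to check that at a finite endpoint $Q-\mathcal{A}^\top(y+s_0(s)e_0+se_j)$ does approach the boundary of the semidefinite cone in a way that drives $g_j$ to $-\infty$; equivalently, $g_j$ cannot stay bounded on $I\cap[0,\bar s)$ for finite $\bar s=\sup I$, since a bounded partial maximum would force the feasible slice at $\bar s$ to be nonempty, contradicting that $\bar s$ is an endpoint of the \emph{open} interval $I$. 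This is exactly the (trivial, in the line case) ingredient that replaces Theorem~\ref{Th:LevelSets} in the linearly constrained setting, just as in Theorem~\ref{Th:steplc}. Once it is in place, the remainder is a verbatim transcription of the argument for Theorem~\ref{Th:steplc}.
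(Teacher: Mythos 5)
Your proposal is correct and follows the route the paper itself takes: the paper disposes of this theorem in one line (``analogous to Theorem~\ref{Th:steplc}, \ldots a direct consequence of Lemma~\ref{lem:existence}''), and your elaboration --- continuous differentiability of $g_j$ along the search curve, barrier blow-up of $g_j$ at any finite endpoint of the feasibility interval forcing an earlier sign change of $g_j'$, and minimality/maximality of the chosen root --- is exactly that argument transplanted from the line $s\mapsto y+se_j$ to the curve $s\mapsto y+s_0(s)e_0+se_j$, with the concavity of the partial maximum $g_j$ as a harmless extra. Two small remarks: the ``hard part'' you flag is in fact immediate here, since Appendix~\ref{sec:lincd2d} computes $s_0(s)$ as an explicit polynomial in $s$, so the non-$\log\det$ part of $f$ along the curve is bounded on any bounded $s$-interval and the one-dimensional barrier argument goes through verbatim; and ``by definition $s^+\in I$'' should instead be \emph{derived} by the same barrier-plus-minimality argument you deploy in the root-free case, because a root of the symbolic expression~\eqref{Eq:gradlc2d} need not a priori lie in the feasible interval --- that containment is precisely the nontrivial content of the first alternative in part~(i).
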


In order to compute the inner product in~\eqref{Eq:gradlc2d},
we propose the following factorizations for the
matrices~$\bar A_j := s_0'(s)A_0+A_{j}$ and $\tilde A_j := s_0(s)A_0+sA_{j}$:
\[
\bar A_j = \bar E_j I \bar C_j, \text{ and }
\tilde A_j = \tilde E_j I \tilde C_j,
\]
where
\[
\bar E_j =
\begin{pmatrix}
\tfrac{1}{2}(s_0'(s)+(A_j)_{00})  & 1\\
(A_j)_{01} & 0\\
\vdots & \vdots\\
(A_j)_{0n} & 0
\end{pmatrix},
\quad
\bar C_j =
\begin{pmatrix}
 1 & 0 & \hdots & 0\\
\tfrac{1}{2}(s_0'(s)+(A_j)_{00}) &  (A_j)_{01} & \hdots & (A_j)_{0n}
\end{pmatrix}
,
\]

\[
\tilde E_j =
\begin{pmatrix}
\tfrac{1}{2}(s_0(s)+s(A_j)_{00})  & 1\\
s(A_j)_{01} & 0\\
\vdots & \vdots\\
s(A_j)_{0n} & 0
\end{pmatrix},
\quad
\tilde C_j =
\begin{pmatrix}
 1 & 0 & \hdots & 0\\
\tfrac{1}{2}(s_0(s)+s(A_j)_{00}) &  s(A_j)_{01} & \hdots & s(A_j)_{0n}
\end{pmatrix}
.
\]
In this way, the inner product of matrices in~\eqref{Eq:gradlc2d}
can be rewritten as the inner product of two~$2\times 2$ matrices:
\begin{align*}
\pin{\bar A_j}{(W^{-1} -\tilde A_{j})^{-1}} &=
\pin{\bar E_jI \bar C_j}{W + W\tilde E_j(I-\tilde C_jW \tilde E_j)\tilde C_jW} \\
&= \pin{I}{\bar E_j^\top W \bar C_j^\top + \bar E_j^\top W\tilde E_j(I-\tilde C_jW \tilde E_j)\tilde C_jW \bar C_j^\top},
\end{align*}
where
\begin{align*}
\bar E_j^\top W \tilde E_j &=
\begin{pmatrix}
d_1 & \bar f\\
\tilde f & w_{00}
\end{pmatrix},\quad
\tilde C_jW \bar C_j^\top =
\begin{pmatrix}
w_{00} & \bar f\\
\tilde f & d_1
\end{pmatrix},\\
\tilde C_jW\tilde E_j  &=
\begin{pmatrix}
\tilde f & w_{00}\\
\tilde d & \tilde f
\end{pmatrix},\quad
\bar E_j^\top W \bar C_j^\top =
\begin{pmatrix}
\bar f & \bar d\\
w_{00} & \bar f
\end{pmatrix},
\end{align*}
and
\begin{align*}
\bar d &=  \pin{W}{(\bar A_j)_{0\cdot}(\bar A_j)_{0\cdot}^\top}, \\
\tilde d &= \pin{W}{(\tilde A_j)_{0\cdot}(\tilde A_j)_{0\cdot}^\top}, \\
\bar f &= W_{0\cdot}^\top (\bar A_j)_{0\cdot},\\
\tilde f &= W_{0\cdot}^\top (\tilde A_j)_{0\cdot},\\
d_1 &= \pin{W}{(\tilde A_j)_{0\cdot}(\bar A_j)_{0\cdot}^\top}.
\end{align*}
By doing all calculations, one can verify that
$\pin{A_{j}}{(W^{-1} -sA_{j})^{-1}}$ is actually zero.
Replacing this into~\eqref{Eq:gradlc2d} we get ~$g'_j(s) = s_0'(s)+\beta_{j}$,
where
\[
s_0'(s) = -\frac{2}{w_{00}}((dw_{00}-f^2)s+f),
\]
and setting~$g'_j(s)$ to zero, we obtain a linear equation on the step
size~$s$, whose root is
\begin{equation}
\label{eq:stepcd2dlc}
s=\frac{2f -\beta_jw_{00}}{2(f^2-dw_{00})}.
\end{equation}
Observe that the step size $s$ is independent on the value of~$\sigma$,
however the step $s_0$ is still dependent.
From Theorem~\ref{Th:steplc2d} it follows that:
\begin{itemize}
\item[(i)] if the coordinate $j$ is such that $g'_j(0)>0$ and $y_j<0$, and
  if the derivative~\eqref{Eq:gradlc2d} has a positive root, then
  the step size~\eqref{eq:stepcd2dlc} must be positive.
  When there is no positive root $s$ can be set to $-y_{ij}$.
\item[(ii)] if the coordinate $j$ is such that $g'_j(0)<0$,  and
  if the derivative~\eqref{Eq:gradlc2d} has a negative root, then
  the step size~\eqref{eq:stepcd2dlc} must be negative.
  When there is no negative root set $s=M$, with $M\ll0$.
\end{itemize}

The coordinate selection will be done in a similar way as in
Section~\ref{Sec:SimultaneusUpdate}, i.e., we will choose the coordinate
with the largest absolute value of~$g'_j(0)$. Recall that from
Section~\ref{Sec:SimultaneusUpdate}, we have $4n$ potential coordinates,
after adding $p$ linear constraints we will have that $4n+p$
candidates to be considered.
\fi

\end{document}